\documentclass[nopreprintline,12pt]{elsarticle}

\usepackage{amsthm}
\usepackage{lineno}
\usepackage{latexsym,amscd,amsfonts,enumerate,supertabular}
\usepackage{tabularx}
\usepackage{array}
\usepackage{multicol}
\usepackage{bigstrut}
\usepackage{algpseudocode}
\usepackage{graphicx}
\usepackage{epsfig}
\usepackage{amssymb}
\usepackage[intlimits]{amsmath}
\usepackage{mathrsfs} 
\usepackage{algorithm}

\usepackage{multirow}
\usepackage{hhline}	
\usepackage{makecell}
\usepackage{color}
\usepackage{xcolor}
\usepackage{float}
\usepackage{graphicx,subfigure}
\usepackage{mathabx}

\usepackage{mathrsfs}

\definecolor{darkmagenta}{rgb}{0.55, 0.0, 0.55}

\newcolumntype{R}[1]{>{\raggedleft\let\newline\\\arraybackslash\hspace{0pt}}m{#1}}
\newcolumntype{L}[1]{>{\raggedright\let\newline\\\arraybackslash\hspace{0pt}}m{#1}}
\newcolumntype{C}[1]{>{\centering\let\newline\\\arraybackslash\hspace{0pt}}m{#1}}

\numberwithin{equation}{section}
\newtheorem{lemma}[]{Lemma}
\numberwithin{lemma}{section}
\newtheorem{theorem}[]{Theorem}
\numberwithin{theorem}{section}
\numberwithin{algorithm}{section}
\newtheorem{remark}[]{Remark}
\numberwithin{remark}{section}
\newtheorem{example}[]{Example}
\numberwithin{example}{section}
\newtheorem{definition}[]{Definition}
\numberwithin{definition}{section}
\newtheorem{corollary}[]{Corollary}
\numberwithin{corollary}{section}
\newtheorem{proposition}{Proposition}
\numberwithin{proposition}{section}
\newcommand{\Sph}{\mathbb{S}^{d-1}}
\newcommand{\E}{\mathbb{E}}
\newcommand{\op}{\mathrm{op}}            
\newcommand{\even}{\mathrm{even}}        
\newcommand{\odd}{\mathrm{odd}}          
\renewcommand{\H}{\mathcal{H}}
\newcommand{\PP}{\mathbb{P}}
\newcommand{\Var}{\mathrm{Var}}
\newcommand{\Cov}{\mathrm{Cov}}
\newcommand{\Corr}{\mathrm{Corr}}

\DeclareMathOperator{\tr}{tr}

\begin{document}
	\begin{frontmatter}
		\title{{\bf
				Size–Location Correlation for Set-Valued Processes: Theory, Estimation, and Laws of Large Numbers under $\rho$-Mixing
		}}
		\author[ioit]{Luc T. Tuyen\corref{cor1}}
		\ead{tuyenlt.ioit@gmail.com}

		\cortext[cor1]{Corresponding author}
		\address[ioit]{Department of Data science and Big data analysis, Institute of Information Technology, Vietnam Academy of Science and Technology, Hanoi, Vietnam}
\begin{abstract}
	We propose a variational framework for analyzing dependence structures of
	convex compact random sets based on their support functions.
	The approach relies on the canonical even--odd decomposition on the unit
	sphere, which separates size-related and location-related components and
	induces an exact orthogonality in the sphere $L^2(\sigma)$ space.
	This decomposition yields an additive variance--covariance structure that is
	intrinsic to set-valued data and cannot be recovered from point-based or
	selection-based representations.
	
	Within this framework, we introduce size, location, and total covariance and
	correlation indices for random sets, together with compatible $\rho$-mixing
	coefficients for set-valued processes.
	The resulting dependence measures are geometrically interpretable, invariant
	under translations, and free of degeneracies that arise for centrally
	symmetric sets under classical approaches.
	Weak and strong laws of large numbers are established under weak stationarity,
	providing asymptotic stability of Minkowski averages in the $L^2(\sigma)$
	support-function norm.
	
	The proposed quantities admit natural numerical realizations via directional
	Monte Carlo and spherical designs.
	Applications to interval-valued and convex-valued data, including regression
	with set-valued responses, illustrate how the even--odd decomposition
	disentangles directional location dependence from size effects beyond what can
	be captured by finite-dimensional summaries such as the Steiner point.
\end{abstract}

\begin{keyword}
	\small
	random sets\sep support functions\sep variational analysis\sep
	size--location decomposition\sep covariance and correlation\sep
	set-valued $\rho$-mixing\sep weak stationarity
	
	\MSC[2020] Primary: 60G10\sep 49J53\sep
	Secondary: 60F05\sep 60F15
\end{keyword}

\end{frontmatter}

\section{Introduction}

Set-valued random variables, also known as random sets, arise naturally in
variational and geometric models where uncertainty affects entire regions,
shapes, or feasible sets rather than individual points.
Typical examples include random convex bodies in stochastic geometry,
interval-valued observations in symbolic data analysis, and solution sets of
random variational inequalities.
A standard analytical representation of convex compact random sets is provided
by their support functions, which embed set-valued objects into function spaces
on the unit sphere and allow the use of variational and functional-analytic
tools \cite{Molchanov2005,Schneider2014,Gardner2006}.

A fundamental operation in this setting is averaging.
Minkowski sums and Aumann expectations play a central role in describing mean
behavior and stability of random sets, and their asymptotic properties have
been extensively studied under independence and various weak dependence
assumptions; see, among others,
\cite{artstein1975strong,Molchanov2005,li2013limit,chen2015strong, tuyen2026strong}.
These results are typically formulated in terms of support functions and rely
on the Hilbert-space structure of $L^2(\sigma)$.

Beyond expectations, however, a systematic variational theory of second-order
quantities for random sets remains incomplete.
In particular, classical notions of variance, covariance, and correlation do
not directly extend to the set-valued setting.
Approaches based on selections or Aumann expectations implicitly aggregate the
support function over directions and therefore confound fundamentally distinct
sources of variability.
As a result, they fail to distinguish between size-related fluctuations and
location-related fluctuations and may degenerate in important geometric
situations, such as centrally symmetric random sets whose location component
vanishes identically.

The present work adopts a structural viewpoint based on the canonical
even--odd decomposition of support functions on the unit sphere.
For a convex compact set, this decomposition separates the even part, encoding
size and width information, from the odd part, encoding location information.
Crucially, the two components are exactly orthogonal in the sphere
$L^2(\sigma)$ space.
This orthogonality induces an additive variance--covariance decomposition that
is intrinsic to set-valued data and has no analogue for point-valued
representations.

Within this framework, classical location summaries such as the Steiner point
appear as finite-dimensional projections of the odd component.
While useful for certain purposes, such projections necessarily discard
directional information and all dependence carried by the even component.
The even--odd decomposition therefore provides a strictly richer description
of geometric variability and dependence for random sets.

Building on this variational structure, we introduce size, location, and total
covariance and correlation indices for convex compact random sets.
These quantities are translation invariant, admit a positive semidefinite
covariance structure, and satisfy natural sign-flip and degeneracy properties
that reflect the underlying geometry.
To capture temporal dependence, we further define compatible $\rho$-mixing
coefficients for set-valued processes via maximal correlations of support
projections, extending the classical scalar notion while preserving geometric
interpretability.

Weak and strong laws of large numbers for weakly stationary $\rho$-mixing
sequences then follow as stability results for Minkowski averages in the
$L^2(\sigma)$ support-function norm.
The role of probability theory in this paper is thus primarily instrumental:
it provides asymptotic guarantees for variational quantities that are defined
and interpreted through the even--odd decomposition.

From a practical perspective, the proposed dependence measures admit natural
numerical realizations via directional Monte Carlo integration and spherical
designs.
Simulation studies demonstrate that the resulting estimators clearly separate
directional location dependence from size effects in situations where
Steiner-point--based summaries are uninformative.

\medskip
The main contributions of the paper are summarized as follows.
\begin{itemize}
	\item A variational size--location--total covariance and correlation framework
	for convex compact random sets based on the even--odd decomposition of support
	functions, with exact orthogonality and additive variance structure.
	\item Compatible $\rho$-mixing coefficients for set-valued processes, defined
	through support-function projections and consistent with the classical scalar
	theory.
	\item Weak and strong laws of large numbers in the $L^2(\sigma)$ support-function
	norm for weakly stationary $\rho$-mixing sequences, ensuring asymptotic
	stability of Minkowski averages.
	\item Directional Monte Carlo and spherical-design estimators for the proposed
	dependence measures, together with numerical studies illustrating their
	geometric interpretability.
	\item Applications to interval-valued and convex-valued data, including
	regression models with set-valued responses.
\end{itemize}

The remainder of the paper is organized as follows.
Section~\ref{sec:Notation} introduces notation and background material on support
functions and the even--odd decomposition.
Section~\ref{sec:New_definitions} develops the variational covariance and
correlation framework.
Section~\ref{sec:limits} presents the asymptotic stability results under weak
dependence.
Section~\ref{sec:simulation} reports numerical studies.
Section~\ref{sec:appls} discusses applications.

\section{Notation and Preliminaries}\label{sec:Notation}

\paragraph{Probability framework.}
We work on a fixed probability space $(\Omega,\mathcal{F},\mathbb{P})$.
Let $(\mathbb{S}^{d-1},\mathcal{B},\sigma)$ denote the unit sphere in $\mathbb{R}^d$
endowed with its Borel $\sigma$-algebra and the normalized surface measure
$\sigma(\mathbb{S}^{d-1})=1$.
In the one-dimensional case $d=1$, we have $\mathbb{S}^0=\{-1,1\}$ with the normalized
counting measure $\sigma=\tfrac12(\delta_{-1}+\delta_1)$.
All expectations, variances, and covariances are taken with respect to $\mathbb{P}$
and denoted by $\mathbb{E}$, $\Var$, and $\Cov$.
We freely use the product space
$(\Omega\times\mathbb{S}^{d-1},\,\mathcal{F}\otimes\mathcal{B},\,\mathbb{P}\otimes\sigma)$
and Fubini--Tonelli whenever integrability is ensured.

\paragraph{Random sets and support functions.}
A (convex compact) random set is a measurable mapping
$X:\Omega\to\mathcal{K}_c(\mathbb{R}^d)$
with Effros/Fell measurability.
Its support function is defined by
\[
h_X(\omega,u):=\sup_{x\in X(\omega)}\langle u,x\rangle,
\qquad u\in\mathbb{S}^{d-1}.
\]
The map $(\omega,u)\mapsto h_X(\omega,u)$ is jointly measurable,
square-integrable in $\omega$ for $\sigma$-a.e.\ $u$, and continuous in $u$.
Throughout the paper we assume
\begin{equation}\label{eq:moment2}
	\int_{\mathbb{S}^{d-1}}\mathbb{E}\,|h_X(u)|^2\,d\sigma(u)<\infty,
\end{equation}
(and analogously for other random sets when needed),
which legitimizes exchanging $\mathbb{E}$ and
$\int_{\mathbb{S}^{d-1}}\cdot\,d\sigma$ by Fubini--Tonelli.

\paragraph{Even--odd decomposition.}
For $u\in\mathbb{S}^{d-1}$ define the even and odd parts of the support function by
\[
W_X(u):=\tfrac12\big(h_X(u)+h_X(-u)\big),
\qquad
C_X(u):=\tfrac12\big(h_X(u)-h_X(-u)\big).
\]
Then $W_X$ is even and $C_X$ is odd in $u$, and
$h_X(u)=W_X(u)+C_X(u)$.
We refer to $W_X$ as the \emph{size} component and to $C_X$ as the
\emph{location} component of $X$.
Centering is taken pointwise in $u$ with respect to $\mathbb{P}$:
\[
\widetilde{W}_X(u):=W_X(u)-\mathbb{E}W_X(u),
\qquad
\widetilde{C}_X(u):=C_X(u)-\mathbb{E}C_X(u).
\]

\paragraph{Sphere $L^2(\sigma)$ norm.}
For any square-integrable function $Z:\mathbb{S}^{d-1}\to\mathbb{R}$ we define
\[
\|Z\|_{2,\sigma}^2
:=\int_{\mathbb{S}^{d-1}} Z(u)^2\,d\sigma(u).
\]
This norm is applied in particular to $W_X$, $C_X$, and their centered versions.
We denote by
$\mathcal{H}:=L^2(\mathbb{S}^{d-1},\sigma)$
the corresponding Hilbert space.

\paragraph{Unified profiles.}
To streamline notation, we introduce for $\star\in\{\mathrm{size},\mathrm{loc},\mathrm{tot}\}$
the profiles
\[
\Phi_X^{(\mathrm{size})}(u):=W_X(u),
\qquad
\Phi_X^{(\mathrm{loc})}(u):=C_X(u),
\qquad
\Phi_X^{(\mathrm{tot})}(u):=h_X(u),
\]
together with their centered versions
\[
\widetilde{\Phi}_X^{(\star)}(u)
:=\Phi_X^{(\star)}(u)-\mathbb{E}\big[\Phi_X^{(\star)}(u)\big].
\]
Whenever convenient, we view $\Phi_X^{(\star)}$ (or $\widetilde{\Phi}_X^{(\star)}$)
as an element of the Hilbert space $\mathcal{H}$.
The evaluation at a direction $u\in\mathbb{S}^{d-1}$ is denoted explicitly by
$\Phi_X^{(\star)}(u)$.
Centering is always understood pointwise in $u$.

\paragraph{Partial sums.}
For a sequence of random sets $(X_i)_{i\ge1}$ and $\star\in\{\mathrm{size},\mathrm{loc},\mathrm{tot}\}$,
we write
\[
S_n^{(\star)}(u)
:=\frac1n\sum_{i=1}^n \widetilde{\Phi}_{X_i}^{(\star)}(u),
\qquad
n\ge1.
\]
When interpreted as an element of $\mathcal{H}$,
$S_n^{(\star)}$ denotes the function
$u\mapsto S_n^{(\star)}(u)$.
No ambiguity will arise from this identification.

\paragraph{Convention.}
Throughout the paper, the intended meaning of expressions involving
$\Phi_X^{(\star)}$ is clear from context:
$\Phi_X^{(\star)}(u)$ denotes a real-valued random variable at direction $u$,
while $\|\Phi_X^{(\star)}\|_{2,\sigma}$ and similar expressions refer to the
$L^2(\sigma)$ norm of the corresponding profile.
All expectations and covariances of such profiles are understood pointwise in $u$
and integrated over $\mathbb{S}^{d-1}$ when required.

\section{Dependence measures via size--location decomposition}\label{sec:New_definitions}
\subsection{Definition of covariance, variance, and correlation}\label{subsec:def-covcorr}

\begin{definition}[Size/location/total covariance, variance, correlation]\label{def:1}
	Let $X,Y$ be set-valued random variables satisfying \eqref{eq:moment2}.
	Define
	\[
	\Cov_{\mathrm{size}}(X,Y)
	:=\int_{\mathbb S^{d-1}}\Cov\big(\widetilde W_X(u),\widetilde W_Y(u)\big)\,d\sigma(u),
	\]
	\[
	\Cov_{\mathrm{loc}}(X,Y)
	:=\int_{\mathbb S^{d-1}}\Cov\big(\widetilde C_X(u),\widetilde C_Y(u)\big)\,d\sigma(u),
	\]
	and $\Var_{\mathrm{size}}(X):=\Cov_{\mathrm{size}}(X,X)$, $\Var_{\mathrm{loc}}(X):=\Cov_{\mathrm{loc}}(X,X)$.
	The \emph{total} covariance and variance are
	\[
	\Cov_{\mathrm{tot}}(X,Y):=\Cov_{\mathrm{size}}(X,Y)+\Cov_{\mathrm{loc}}(X,Y),
	\qquad
	\Var_{\mathrm{tot}}(X):=\Var_{\mathrm{size}}(X)+\Var_{\mathrm{loc}}(X).
	\]
	Whenever $\Var_{\star}(X)\Var_{\star}(Y)>0$ for $\star\in\{\mathrm{size},\mathrm{loc},\mathrm{tot}\}$, define
	\[
	\Corr_{\star}(X,Y)
	:=\frac{\Cov_{\star}(X,Y)}{\sqrt{\Var_{\star}(X)\Var_{\star}(Y)}}\in[-1,1].
	\]
\end{definition}

\begin{proposition}[Structural orthogonality and additivity]\label{prop:orth-add}
	For any set-valued random variables $X,Y$ satisfying \eqref{eq:moment2},
	\begin{equation}\label{eq:orthogonality}
		\int_{\mathbb S^{d-1}}\E\big[\widetilde W_X(u)\,\widetilde C_Y(u)\big]\,d\sigma(u)=0.
	\end{equation}
	Consequently,
	\[
	\Cov_{\mathrm{tot}}(X,Y)=\Cov_{\mathrm{size}}(X,Y)+\Cov_{\mathrm{loc}}(X,Y),
	\qquad
	\Var_{\mathrm{tot}}(X)=\Var_{\mathrm{size}}(X)+\Var_{\mathrm{loc}}(X),
	\]
	and the total covariance/variance in Definition~\ref{def:1} involves no cross-terms.
\end{proposition}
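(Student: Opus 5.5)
The plan is to deduce the orthogonality \eqref{eq:orthogonality} from parity on the sphere, and then read off additivity from a bilinear expansion. The only facts needed are that the antipodal map $u\mapsto -u$ preserves the normalized surface measure $\sigma$ (for $d=1$ this is just the symmetry of $\tfrac12(\delta_{-1}+\delta_1)$), and that an odd integrable function on $\mathbb S^{d-1}$ has zero $\sigma$-integral.

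\textbf{Step 1: integrability.} We check that $(\omega,u)\mapsto \widetilde W_X(u)\widetilde C_Y(u)$ lies in $L^1(\mathbb P\otimes\sigma)$. By \eqref{eq:moment2} for $X$ and $Y$, and the invariance of $\sigma$ under $u\mapsto-u$, both $W_X$ and $C_Y$ are in $L^2(\mathbb P\otimes\sigma)$, with norms controlled by that of $h$. Centering is an $L^2$ contraction pointwise in $u$, since $\E|\widetilde W_X(u)|^2\le \E|W_X(u)|^2$. Cauchy--Schwarz on the product space then gives $\int\E|\widetilde W_X\widetilde C_Y|\,d\sigma<\infty$. This justifies Fubini--Tonelli, and it also shows that $u\mapsto g(u):=\E[\widetilde W_X(u)\widetilde C_Y(u)]$ is $\sigma$-integrable.

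\textbf{Step 2: parity.} Since $W_X(-u)=W_X(u)$ holds pathwise, taking expectations gives $\E W_X(-u)=\E W_X(u)$, so $\widetilde W_X$ is even. In the same way $\widetilde C_Y(-u)=-\widetilde C_Y(u)$. Hence $g(-u)=-g(u)$. Using $\sigma\circ(-\mathrm{id})^{-1}=\sigma$,
\[
\int g\,d\sigma=\int g(-u)\,d\sigma(u)=-\int g\,d\sigma ,
\]
so $\int g\,d\sigma=0$. This is \eqref{eq:orthogonality}. By the same argument with $X$ and $Y$ interchanged, $\int\E[\widetilde C_X\widetilde W_Y]\,d\sigma=0$.

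\textbf{Step 3: additivity.} Write $\widetilde h_X=\widetilde W_X+\widetilde C_X$ and similarly for $Y$. Expand
\[
\int\Cov(\widetilde h_X(u),\widetilde h_Y(u))\,d\sigma(u)
\]
bilinearly; every term is integrable by Step 1. The two cross terms vanish by Step 2, and what remains is $\Cov_{\mathrm{size}}(X,Y)+\Cov_{\mathrm{loc}}(X,Y)$. So the total covariance of Definition~\ref{def:1}, computed directly from the total profile $\Phi^{(\mathrm{tot})}=h$, agrees with the sum, and no cross-terms appear. Setting $Y=X$ gives the variance identity.

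The main obstacle is not conceptual. It is making sure that integrability of the cross term follows from \eqref{eq:moment2} alone, which is the Cauchy--Schwarz step on the product space in Step 1, together with the invariance of $\sigma$ under the antipodal map. Everything else is routine symmetry.
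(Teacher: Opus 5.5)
Your proof is correct and follows the same route as the paper. The function $u\mapsto\E[\widetilde W_X(u)\widetilde C_Y(u)]$ is odd and $\sigma$ is antipodally invariant, so the cross term integrates to zero, and a bilinear expansion of $\widetilde h_X=\widetilde W_X+\widetilde C_X$ then gives additivity. You also make explicit three points the paper leaves implicit. These are the $L^1(\mathbb P\otimes\sigma)$ integrability of the cross term via Cauchy--Schwarz, the vanishing of the second cross term $\int\E[\widetilde C_X\widetilde W_Y]\,d\sigma$, and the reading of ``no cross-terms'' as agreement with $\int\Cov(\widetilde h_X(u),\widetilde h_Y(u))\,d\sigma(u)$, since Definition~\ref{def:1} sets $\Cov_{\mathrm{tot}}$ equal to the sum by definition.
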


\begin{proof}
	Fix $X,Y$ and consider the integrand $u\mapsto \E[\widetilde W_X(u)\widetilde C_Y(u)]$.
	Since $\widetilde W_X$ is even in $u$ and $\widetilde C_Y$ is odd in $u$, their product is odd:
	\[
	\E[\widetilde W_X(-u)\widetilde C_Y(-u)]
	=\E[\widetilde W_X(u)\cdot (-\widetilde C_Y(u))]
	=-\E[\widetilde W_X(u)\widetilde C_Y(u)].
	\]
	Integrating this odd function over the sphere with the symmetric measure $\sigma$ yields \eqref{eq:orthogonality}.
	The additivity identities follow by expanding $\widetilde h_X=\widetilde W_X+\widetilde C_X$ and integrating, with the cross-term vanishing by \eqref{eq:orthogonality}.
\end{proof}

\begin{remark}
	The bound $|\Corr_{\star}(X,Y)|\le 1$ (for $\star\in\{\mathrm{size},\mathrm{loc},\mathrm{tot}\}$ whenever defined)
	will be derived from a positive semidefinite covariance matrix property; see Proposition~\ref{prop:PSD-CS}.
\end{remark}

\subsection{Relation to Steiner point centering}\label{subsec:steiner}
\begin{definition}[Steiner--centered (residual) location component]
	Let $X$ be a random convex compact set in $\mathbb R^d$ with support function $h_X$
	and odd component
	\[
	C_X(u) := \tfrac12\big(h_X(u)-h_X(-u)\big), \qquad u\in\mathbb S^{d-1}.
	\]
	Let $s(X)\in\mathbb R^d$ denote its Steiner point.  
	We define the \emph{Steiner--centered (residual) location component} by
	\[
	C_X^{\mathrm{res}}(u)
	\ :=\ 
	C_X(u)-\langle s(X),u\rangle, 
	\qquad u\in\mathbb S^{d-1}.
	\]
\end{definition}

\begin{remark}[Interpretation and role in dependence analysis]\label{rem:residual-loc}
	The decomposition
	\[
	C_X(u)
	\ =\ 
	\langle s(X),u\rangle
	\ +\ 
	C_X^{\mathrm{res}}(u)
	\]
	separates the odd part of the support function into a \emph{linear component}
	induced by the Steiner point and a \emph{residual odd component} capturing
	higher--order location fluctuations.
	The residual $C_X^{\mathrm{res}}$ belongs to the odd subspace of
	$L^2(\mathbb S^{d-1},\sigma)$, is centered, and removes the linear component
	generated by the Steiner point.
		
	In particular, any correlation or covariance defined via $C_X^{\mathrm{res}}$
	measures location dependence beyond what can be represented by a single
	point--valued summary such as the Steiner point.
	All limit theorems established for the odd component $C_X$
	(WLLN, FCLT, LIL) remain valid for $C_X^{\mathrm{res}}$,
	since the subtraction of the linear term $\langle s(X),u\rangle$ does not affect
	stationarity, mixing, or moment conditions.
\end{remark}

\begin{remark}[Steiner point as a projection of the odd component]\label{rem:steiner-proj}
	A classical location descriptor for a convex compact set is its Steiner point $s(K)\in\mathbb{R}^d$,
	which is translation-equivariant and rotation-invariant.
	It can be represented (up to a dimension-dependent constant) as a linear functional of the support function,
	\[
	s(K)\ \propto\ \int_{\mathbb{S}^{d-1}} u\,h_K(u)\,d\sigma(u).
	\]
	Since $u$ is odd and $h_K=W_K+C_K$ with $W_K$ even and $C_K$ odd, the even part does not contribute:
	\[
	\int_{\mathbb{S}^{d-1}}u\,W_K(u)\,d\sigma(u)=0,
	\qquad
	s(K)\ \propto\ \int_{\mathbb{S}^{d-1}}u\,C_K(u)\,d\sigma(u).
	\]
	Thus, the Steiner point can be viewed as a finite-dimensional \emph{projection} of the odd component.
	In contrast, our location profile $u\mapsto C_K(u)$ retains full directional
	information in $L^2(\sigma)$, which is essential for detecting directional
	location dependence, as illustrated in the simulation study.
	
\end{remark}

\begin{example}[Directional dependence invisible to the Steiner point]\label{ex:steiner-fail}
	Let $d\ge 2$ and let $A\in\mathcal{K}_c(\mathbb{R}^d)$ be a non-centrally symmetric convex body whose Steiner point is at the origin, $s(A)=0$.
	(Such an $A$ is obtained by translating any non-centrally symmetric body by minus its Steiner point.)
	Let $(\varepsilon_n)_{n\in\mathbb{Z}}$ be a stationary real-valued process with $\E[\varepsilon_n]=0$
	and $\varepsilon_n\ge 0$ a.s.\ (so that Minkowski scaling is well-defined).
	Define a set-valued time series by
	\[
	K_n:=K_0+\varepsilon_n A,
	\]
	where $K_0$ is a fixed centrally symmetric convex body with $s(K_0)=0$.
	
	By Minkowski additivity of the Steiner point and $s(A)=s(K_0)=0$, we have $s(K_n)=0$ for all $n$.
	Hence any dependence analysis based solely on the Steiner point yields the trivial (identically zero) process.
	
	On the other hand, the support function satisfies
	\[
	h_{K_n}(u)=h_{K_0}(u)+\varepsilon_n h_A(u),
	\]
	so the odd component is
	\[
	C_{K_n}(u)=C_{K_0}(u)+\varepsilon_n C_A(u)=\varepsilon_n C_A(u),
	\]
	because $K_0$ is centrally symmetric and thus $C_{K_0}\equiv 0$.
	Therefore, for each direction $u$ with $C_A(u)\neq 0$,
	\[
	\Cov\big(\widetilde C_{K_n}(u),\widetilde C_{K_{n+k}}(u)\big)
	=\Cov(\varepsilon_0,\varepsilon_k)\,C_A(u)^2,
	\]
	and the location covariance $\Cov_{\mathrm{loc}}(K_0,K_k)$ directly reflects the temporal dependence of $(\varepsilon_n)$.
	This shows that Steiner-point centering can miss directional dependence that remains visible in the full odd profile.
\end{example}

\subsection{Relation to spherical harmonic representations}
\label{subsec:SH-relation}

Since the support function $h_X$ of a random convex body $X$ belongs to
$L^2(\mathbb S^{d-1},\sigma)$ under our standing moment assumptions,
it admits the classical spherical harmonic expansion
\[
h_X(u)=\sum_{\ell=0}^{\infty}\sum_{m=1}^{N(d,\ell)} a_{\ell m}(X)\,Y_{\ell m}(u),
\qquad u\in\mathbb S^{d-1},
\]
where $\{Y_{\ell m}\}$ is an orthonormal basis of spherical harmonics.
Recall that spherical harmonics satisfy the parity property
$Y_{\ell m}(-u)=(-1)^{\ell}Y_{\ell m}(u)$.

\medskip
\noindent\textbf{Even/odd decomposition.}
The loc/size decomposition introduced in Section~\ref{subsec:def-covcorr} is given by
\[
W_X(u)=\tfrac12\big(h_X(u)+h_X(-u)\big),\qquad
C_X(u)=\tfrac12\big(h_X(u)-h_X(-u)\big).
\]
In terms of spherical harmonics, this corresponds to the orthogonal
decomposition of $L^2(\mathbb S^{d-1})$ into even and odd subspaces:
$W_X$ contains only the even degrees $\ell=0,2,4,\dots$, while $C_X$
contains only the odd degrees $\ell=1,3,5,\dots$.
In particular, the degree-$1$ component represents translations of the
convex body, whereas the even component captures size and shape features
invariant under reflection.

\medskip
\noindent\textbf{Beyond a spectral interpretation.}
Although the above observation provides a harmonic interpretation of the
loc/size split, our analysis does \emph{not} proceed at the level of
spherical harmonic coefficients $\{a_{\ell m}(X)\}$.
Instead, we work directly with the support functions as random elements
in $L^2(\mathbb S^{d-1})$ and define variance, covariance, and correlation
via integrals over directions, see Definitions~\ref{def:1}.
Consequently, the dependence structure is formulated through
pointwise maximal correlations of directional evaluations
$\{h_X(u):u\in\mathbb S^{d-1}\}$ rather than through spectral
coefficients or angular power spectra.

\medskip
\noindent\textbf{Dependence and limit theorems.}
The $\rho$-mixing condition employed in this paper is defined by taking
the supremum over directions of the maximal correlation between
$h_{X_0}(u)$ and $h_{X_k}(u)$ (or their loc/size components).
This notion of dependence is geometric and direction-wise, and it does
not reduce to, nor is implied by, mixing conditions on the harmonic
coefficients.
Accordingly, the law of large numbers proved in Section~\ref{sec:limits}
is not a direct consequence of harmonic analysis or spectral theory,
but rather a probabilistic result for weakly dependent sequences of
random convex bodies within the framework of random set theory.

\medskip
\noindent\textbf{Summary.}
Spherical harmonics provide a useful language to interpret the
loc/size decomposition as an even/odd orthogonal splitting in
$L^2(\mathbb S^{d-1})$.
However, the main contributions of this paper lie in the formulation of
a correlation and mixing framework tailored to random convex bodies and
in the resulting limit theorems, which go beyond a purely spectral or
harmonic analysis approach.

\subsection{Basic properties}\label{subsec:basic-props}

\begin{proposition}[Linearity, symmetry, scaling, translation invariance]\label{prop:lin-sym-scale-shift}
	For $\star\in\{\mathrm{size},\mathrm{loc},\mathrm{tot}\}$ and any set-valued random variables $X,Y,Z$ and scalars $a,b\in\mathbb{R}$:
	\begin{enumerate}
		\item[(P1)] $\Cov_\star(X+Z,Y)=\Cov_\star(X,Y)+\Cov_\star(Z,Y)$ and $\Cov_\star(X,Y)=\Cov_\star(Y,X)$.
		\item[(P2)] $\Cov_\star(aX,bY)=ab\,\Cov_\star(X,Y)$.
		\item[(P3)] For deterministic $c\in\mathbb{R}^d$, $\Cov_\star(X+c,Y)=\Cov_\star(X,Y)$.
	\end{enumerate}
\end{proposition}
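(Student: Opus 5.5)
The plan is to reduce every property to the behaviour of the support function under the elementary set operations. Each covariance in Definition~\ref{def:1} is built from pointwise covariances of the profiles $W$ and $C$, so it suffices to track how these profiles change. The needed identities are
\[
h_{X+Z}=h_X+h_Z,\qquad h_{aX}(u)=a\,h_X(u)\ (a\ge0),\qquad h_{X+c}(u)=h_X(u)+\langle c,u\rangle .
\]
From these, the even and odd parts transform as follows:
\begin{itemize}
\item $W_{X+Z}=W_X+W_Z$ and $C_{X+Z}=C_X+C_Z$, since the even/odd projection is linear;
\item $W_{X+c}=W_X$ and $C_{X+c}=C_X+\langle c,u\rangle$;
\item $W_{aX}=aW_X$ and $C_{aX}=aC_X$ for $a\ge0$.
\end{itemize}
For $a<0$, Minkowski scaling gives $aX=|a|(-X)$, and $h_{-X}(u)=h_X(-u)$. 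Hence $W_{-X}=W_X$ and $C_{-X}=-C_X$, so we get $C_{aX}=aC_X$ but $W_{aX}=|a|W_X$. I expect this sign issue to be the main obstacle for (P2). I will state (P2) for the loc component with arbitrary real $a,b$. For the size and total components I will either restrict to $a,b\ge0$, or read $aX$ formally as the function $a\,h_X$ in $\mathcal H$. Under the second reading the identity is immediate from bilinearity of $\Cov$, and this is presumably the intended meaning. I will say explicitly which convention is used.

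For (P1), linearity of $W$ and $C$ in the Minkowski sum gives, for each fixed $u$, $\widetilde W_{X+Z}(u)=\widetilde W_X(u)+\widetilde W_Z(u)$, and similarly for $\widetilde C$. Bilinearity of the scalar covariance then splits the integrand. Integrating over $\sigma$ is legitimate by Fubini--Tonelli, because by Cauchy--Schwarz each pointwise covariance is dominated by an integrable function under \eqref{eq:moment2}. This gives additivity for the size and loc components. The total case follows by summing the two, using the definition of $\Cov_{\mathrm{tot}}$ (with Proposition~\ref{prop:orth-add} if one prefers the form $\int\Cov(\widetilde h_X,\widetilde h_Y)\,d\sigma$). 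Symmetry is inherited directly from symmetry of the scalar covariance.

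For (P3), the profile shift caused by the translation is deterministic. The even part is unchanged, and the odd part changes by $\langle c,u\rangle$, a deterministic function of $u$ that is removed by pointwise centering: $\widetilde C_{X+c}(u)=\widetilde C_X(u)$. So all three covariances are unchanged. This also covers the case where $c$ is a random vector independent of nothing in particular, as long as it is deterministic; a random $c$ would not be covered.

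Finally, each step requires $X+Z$, $aX$ and $X+c$ to satisfy \eqref{eq:moment2}. This follows from $(x+y)^2\le 2x^2+2y^2$ and the boundedness of $\langle c,u\rangle$ on the sphere.
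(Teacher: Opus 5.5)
Your arguments for (P1) and (P3) match the paper's: additivity of support functions under Minkowski addition, linearity of the even/odd projections, bilinearity and symmetry of the scalar covariance, and cancellation of the deterministic term $\langle c,u\rangle$ under pointwise centering. Your integrability remarks are a small addition the paper omits.

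On (P2) you have found a genuine defect in the statement, not merely an obstacle, and you should say so outright instead of hedging. The paper derives the same identities as you for $a<0$, namely $W_{aX}=|a|W_X$ and $C_{aX}=aC_X$. It then claims that ``a change of variable $u\mapsto -u$ handles the absolute value in the size part.'' That step is invalid. $W_X$ is even, so the substitution leaves $\int\Cov(\widetilde W_{aX}(u),\widetilde W_{bY}(u))\,d\sigma(u)$ unchanged and cannot create a sign. The correct identities are $\Cov_{\mathrm{size}}(aX,bY)=|ab|\,\Cov_{\mathrm{size}}(X,Y)$ and $\Cov_{\mathrm{tot}}(aX,bY)=|ab|\,\Cov_{\mathrm{size}}(X,Y)+ab\,\Cov_{\mathrm{loc}}(X,Y)$. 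The paper's own Proposition~\ref{prop:signflip} is a counterexample to (P2). With $a=1$, $b=-1$, $Y=X$ it gives $\Cov_{\mathrm{size}}(X,-X)=\Var_{\mathrm{size}}(X)$, whereas (P2) would give $-\Var_{\mathrm{size}}(X)$.

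So (P2) holds for $\star=\mathrm{loc}$ and all real $a,b$. For $\star\in\{\mathrm{size},\mathrm{tot}\}$ it holds whenever $ab\ge 0$, and fails for $ab<0$ as soon as $\Cov_{\mathrm{size}}(X,Y)\neq 0$. Your proposed restriction to $a,b\ge 0$ is therefore stronger than needed: $a,b\le 0$ works too, since then $|ab|=ab$.

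Drop the alternative `formal' reading of $aX$ as the function $a\,h_X$, and the remark that this is `presumably the intended meaning.' The paper's proof explicitly uses the set identity $h_{aX}(u)=|a|\,h_X(-u)$. Moreover, for $a<0$ the function $a\,h_X$ is concave, so it is not a support function unless $X$ is a singleton. That reading proves a statement about elements of $\mathcal H$, not (P2) for random sets. The right deliverable is the corrected (P2) together with this counterexample.
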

\begin{proof}
	For Minkowski addition, $h_{X+Z}=h_X+h_Z$, hence $W_{X+Z}=W_X+W_Z$ and $C_{X+Z}=C_X+C_Z$.
	After pointwise centering, $\widetilde W_{X+Z}=\widetilde W_X+\widetilde W_Z$ and
	$\widetilde C_{X+Z}=\widetilde C_X+\widetilde C_Z$.
	Linearity and symmetry of scalar covariance and integration in $u$ yield (P1).
	Scaling follows from $h_{aX}(u)=a\,h_X(u)$ for $a\ge 0$ and the identity $h_{aX}(u)=|a|h_X(-u)$ for $a<0$,
	which implies $W_{aX}=|a|W_X$ and $C_{aX}=a\,C_X$. A change of variable $u\mapsto -u$ handles the absolute value in the size part.
	For translation, $h_{X+c}(u)=h_X(u)+\langle u,c\rangle$, so $W_{X+c}=W_X$ and $C_{X+c}=C_X+\langle u,c\rangle$;
	after centering, the deterministic term cancels and the covariances remain unchanged.
\end{proof}

\begin{proposition}[Perfect sign flip for components]\label{prop:signflip}
	We have $W_{-X}=W_X$ and $C_{-X}=-C_X$. Consequently, whenever the corresponding variances are positive,
	\[
	\Corr_{\mathrm{size}}(X,-X)=1,
	\qquad
	\Corr_{\mathrm{loc}}(X,-X)=-1.
	\]
	Moreover, if $\Var_{\mathrm{tot}}(X)>0$ then
	\begin{equation}\label{eq:signflip-tot}
		\Corr_{\mathrm{tot}}(X,-X)=\pi_{\mathrm{size}}(X)-\pi_{\mathrm{loc}}(X),
	\end{equation}
	where $\pi_{\mathrm{loc}}(X):=\Var_{\mathrm{loc}}(X)/\Var_{\mathrm{tot}}(X)$ and $\pi_{\mathrm{size}}(X):=1-\pi_{\mathrm{loc}}(X)$.
\end{proposition}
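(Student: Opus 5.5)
The plan is to first establish the pointwise identities for the support function of the reflected set $-X=\{-x:x\in X\}$, and then reduce everything to Definition~\ref{def:1} together with Proposition~\ref{prop:orth-add}. From the definition of the support function,
\[
h_{-X}(u)=\sup_{x\in X}\langle u,-x\rangle=\sup_{x\in X}\langle -u,x\rangle=h_X(-u).
\]
Substituting this into the definitions of the even and odd parts gives $W_{-X}(u)=\tfrac12(h_X(-u)+h_X(u))=W_X(u)$ and $C_{-X}(u)=\tfrac12(h_X(-u)-h_X(u))=-C_X(u)$. These identities pass to expectations, and hence to the pointwise centered versions: $\widetilde W_{-X}=\widetilde W_X$ and $\widetilde C_{-X}=-\widetilde C_X$. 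One could also cite the case $a=-1$ of the proof of Proposition~\ref{prop:lin-sym-scale-shift}(P2), which uses the same scaling rule. I would avoid quoting (P2) as stated, however. For $a<0$ the set $aX$ is not the naive scalar multiple on the size part, so the clean route is the direct identity above.

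Next I compute the component covariances. Since $\widetilde W_{-X}=\widetilde W_X$, we get $\Cov_{\mathrm{size}}(X,-X)=\int\Var(\widetilde W_X(u))\,d\sigma=\Var_{\mathrm{size}}(X)$, and also $\Var_{\mathrm{size}}(-X)=\Var_{\mathrm{size}}(X)$. Since $\widetilde C_{-X}=-\widetilde C_X$, bilinearity of scalar covariance gives $\Cov_{\mathrm{loc}}(X,-X)=-\Var_{\mathrm{loc}}(X)$, while $\Var_{\mathrm{loc}}(-X)=\Var_{\mathrm{loc}}(X)$. When these variances are positive, dividing by $\sqrt{\Var_\star(X)\Var_\star(-X)}=\Var_\star(X)$ yields $\Corr_{\mathrm{size}}(X,-X)=1$ and $\Corr_{\mathrm{loc}}(X,-X)=-1$.

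For the total correlation, I use the additive structure $\Cov_{\mathrm{tot}}=\Cov_{\mathrm{size}}+\Cov_{\mathrm{loc}}$ from Definition~\ref{def:1}, which Proposition~\ref{prop:orth-add} confirms has no cross terms. This gives
\[
\Cov_{\mathrm{tot}}(X,-X)=\Var_{\mathrm{size}}(X)-\Var_{\mathrm{loc}}(X),
\qquad
\Var_{\mathrm{tot}}(-X)=\Var_{\mathrm{size}}(X)+\Var_{\mathrm{loc}}(X)=\Var_{\mathrm{tot}}(X).
\]
So the denominator is $\Var_{\mathrm{tot}}(X)>0$. Dividing through then gives $\Corr_{\mathrm{tot}}(X,-X)=(\Var_{\mathrm{size}}(X)-\Var_{\mathrm{loc}}(X))/\Var_{\mathrm{tot}}(X)=\pi_{\mathrm{size}}(X)-\pi_{\mathrm{loc}}(X)$, where $\pi_{\mathrm{size}}=1-\pi_{\mathrm{loc}}$ follows from the same additivity.

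No step is genuinely hard. The only point requiring care is the first one: verifying $h_{-X}(u)=h_X(-u)$ and checking that the moment condition \eqref{eq:moment2} transfers to $-X$. The latter holds because $u\mapsto -u$ preserves $\sigma$, so all the integrals above are finite and Fubini applies exactly as for $X$.
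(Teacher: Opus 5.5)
Your proposal is correct and follows essentially the same route as the paper: derive $h_{-X}(u)=h_X(-u)$, deduce $\widetilde W_{-X}=\widetilde W_X$ and $\widetilde C_{-X}=-\widetilde C_X$, integrate the scalar covariance identities, and divide by $\Var_{\mathrm{tot}}(X)$ using the additive definition of the total quantities. Two of your additions are well placed: the explicit check that $\Var_\star(-X)=\Var_\star(X)$, which the paper uses tacitly, and the caution against quoting (P2) for $a<0$, since $W_{aX}=|a|W_X$ makes (P2) as literally stated inconsistent with $\Corr_{\mathrm{size}}(X,-X)=1$.
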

\begin{proof}
	For $-X=\{-x:x\in X\}$,
	\[
	h_{-X}(u)=\sup_{x\in X}\langle u,-x\rangle=\sup_{x\in X}\langle -u,x\rangle=h_X(-u).
	\]
	Hence $W_{-X}(u)=\tfrac12(h_X(-u)+h_X(u))=W_X(u)$ and
	$C_{-X}(u)=\tfrac12(h_X(-u)-h_X(u))=-C_X(u)$.
	After centering, $\widetilde W_{-X}=\widetilde W_X$ and $\widetilde C_{-X}=-\widetilde C_X$.
	Integrating the scalar covariance identities yields
	$\Cov_{\mathrm{size}}(X,-X)=\Var_{\mathrm{size}}(X)$ and
	$\Cov_{\mathrm{loc}}(X,-X)=-\Var_{\mathrm{loc}}(X)$, hence
	$\Corr_{\mathrm{size}}(X,-X)=1$ and $\Corr_{\mathrm{loc}}(X,-X)=-1$ when defined.
	Finally,
	\[
	\Corr_{\mathrm{tot}}(X,-X)
	=\frac{\Var_{\mathrm{size}}(X)-\Var_{\mathrm{loc}}(X)}{\Var_{\mathrm{tot}}(X)}
	=\pi_{\mathrm{size}}(X)-\pi_{\mathrm{loc}}(X).
	\]
\end{proof}

\begin{proposition}[PSD covariance matrix and Cauchy--Schwarz]\label{prop:PSD-CS}
	For a finite family $X_1,\dots,X_m$ and $\star\in\{\mathrm{size},\mathrm{loc},\mathrm{tot}\}$,
	the matrix $A=(A_{ij})$ with $A_{ij}:=\Cov_\star(X_i,X_j)$ is positive semidefinite.
	Consequently $|\Corr_\star(X,Y)|\le 1$ whenever $\Var_\star(X)\Var_\star(Y)>0$.
\end{proposition}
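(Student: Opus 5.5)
The plan is to show that for any real vector $c=(c_1,\dots,c_m)$ the quadratic form $c^\top A c$ is a nonnegative quantity written as an integral of a variance. First fix $\star\in\{\mathrm{size},\mathrm{loc}\}$. By Definition~\ref{def:1} and bilinearity of scalar covariance,
\[
c^\top A c=\sum_{i,j}c_ic_j\int_{\mathbb S^{d-1}}\Cov\big(\widetilde\Phi^{(\star)}_{X_i}(u),\widetilde\Phi^{(\star)}_{X_j}(u)\big)\,d\sigma(u)
=\int_{\mathbb S^{d-1}}\Var\Big(\sum_{i=1}^m c_i\,\widetilde\Phi^{(\star)}_{X_i}(u)\Big)\,d\sigma(u)\ \ge 0 .
\]
Exchanging the finite sum with the integral is trivially allowed. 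Each pointwise covariance is finite for $\sigma$-a.e.\ $u$, and the integrals converge absolutely because $|\Cov(\widetilde\Phi_{X_i}(u),\widetilde\Phi_{X_j}(u))|\le \tfrac12(\E\Phi_{X_i}(u)^2+\E\Phi_{X_j}(u)^2)$. By \eqref{eq:moment2} and the bounds $|W_X(u)|,|C_X(u)|\le \tfrac12(|h_X(u)|+|h_X(-u)|)$, this majorant is $\sigma$-integrable, using the symmetry of $\sigma$. I will not rely on the misleading identity $\Phi_{\sum c_iX_i}=\sum c_i\Phi_{X_i}$, which fails for negative $c_i$ in the size component. The argument works directly at the level of profiles, so no Minkowski combination with negative coefficients is ever needed.

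For $\star=\mathrm{tot}$, write $A^{\mathrm{tot}}=A^{\mathrm{size}}+A^{\mathrm{loc}}$ by Definition~\ref{def:1}. A sum of PSD matrices is PSD, so this case follows immediately. Equivalently, Proposition~\ref{prop:orth-add} shows that $A^{\mathrm{tot}}_{ij}=\int\Cov(\widetilde h_{X_i}(u),\widetilde h_{X_j}(u))\,d\sigma(u)$, and the same variance argument applies with $\Phi^{(\mathrm{tot})}=h$.

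For the Cauchy--Schwarz consequence, apply the result with $m=2$, $X_1=X$, $X_2=Y$. A PSD $2\times 2$ matrix has nonnegative determinant, so $\Cov_\star(X,Y)^2\le \Var_\star(X)\Var_\star(Y)$. Dividing by the positive product gives $|\Corr_\star(X,Y)|\le 1$. The only step needing any care is the integrability and justification of interchanging sums, covariances and integrals, and that is routine given \eqref{eq:moment2}.
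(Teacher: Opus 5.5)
Your proof is correct and follows the paper's argument: write $c^\top A c=\int_{\mathbb S^{d-1}}\Var\big(\sum_i c_i\widetilde\Phi^{(\star)}_{X_i}(u)\big)\,d\sigma(u)\ge 0$, then take $m=2$ for the correlation bound; your integrability justification is a welcome addition. Your treatment of $\star=\mathrm{tot}$ via $A^{\mathrm{tot}}=A^{\mathrm{size}}+A^{\mathrm{loc}}$ matches Definition~\ref{def:1} slightly more directly than the paper does, since the paper applies the variance argument to $\Phi^{(\mathrm{tot})}=h$ and thus tacitly relies on Proposition~\ref{prop:orth-add} to identify $\Cov_{\mathrm{tot}}$ with the integrated covariance of the support functions.
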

\begin{proof}
	Fix $\star\in\{\mathrm{size},\mathrm{loc},\mathrm{tot}\}$ and define $\Phi_i(u):=\widetilde\Phi^{(\star)}_{X_i}(u)\in L^2(\Omega)$.
	For any $a\in\mathbb{R}^m$,
	\[
	a^\top A a
	=\sum_{i,j}a_i a_j\int_{\mathbb S^{d-1}}\Cov(\Phi_i(u),\Phi_j(u))\,d\sigma(u)
	=\int_{\mathbb S^{d-1}}\Var\!\Big(\sum_{i=1}^m a_i\Phi_i(u)\Big)\,d\sigma(u)\ge 0,
	\]
	so $A\succeq 0$.
	Taking $m=2$ gives $|\Cov_\star(X,Y)|\le \sqrt{\Var_\star(X)\Var_\star(Y)}$ and hence $|\Corr_\star(X,Y)|\le 1$ when defined.
\end{proof}

\begin{proposition}[Singleton and symmetric cases]\label{prop:degenerate}
	Assume \eqref{eq:moment2}.
	\begin{enumerate}
		\item[(a)] If $X=\{\xi\}$ a.s.\ for a random vector $\xi\in\mathbb{R}^d$ with $\E\|\xi\|^2<\infty$, then
		$W_X\equiv 0$, $C_X(u)=\langle u,\xi\rangle$, and $\Var_{\mathrm{size}}(X)=0$.
		Moreover,
		\[
		\Var_{\mathrm{loc}}(X)=\int_{\mathbb S^{d-1}}\Var(\langle u,\xi\rangle)\,d\sigma(u)
		=\frac{1}{d}\,\tr(\Cov(\xi)).
		\]
		\item[(b)] If $X=\{\xi\}$ and $Y=\{\eta\}$ a.s.\ with $\E\|\xi\|^2+\E\|\eta\|^2<\infty$, then
		\[
		\Cov_{\mathrm{loc}}(X,Y)=\frac{1}{d}\,\tr(\Cov(\xi,\eta)),
		\;
		\Corr_{\mathrm{loc}}(X,Y)
		=\frac{\tr(\Cov(\xi,\eta))}{\sqrt{\tr(\Cov(\xi))\,\tr(\Cov(\eta))}}.
		\]
		In particular, when $d=1$ this reduces to the Pearson correlation.
		\item[(c)] If $X=-X$ a.s.\ (central symmetry about the origin), then $C_X\equiv 0$ and $\Var_{\mathrm{loc}}(X)=0$,
		so $\Cov_{\mathrm{loc}}(X,Y)=0$ for any $Y$.
	\end{enumerate}
\end{proposition}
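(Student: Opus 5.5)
The plan is to compute the support function of each degenerate set directly, read off its even and odd parts, and then evaluate the direction integrals using one spherical moment identity.

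\textbf{Part (a).} For $X=\{\xi\}$ we have $h_X(u)=\langle u,\xi\rangle$. This is odd in $u$, so $W_X(u)=\tfrac12(\langle u,\xi\rangle+\langle -u,\xi\rangle)=0$ and $C_X(u)=\langle u,\xi\rangle$. Consequently $\widetilde W_X\equiv 0$ and $\Var_{\mathrm{size}}(X)=0$ by Definition~\ref{def:1}. Pointwise centering gives $\widetilde C_X(u)=\langle u,\xi-\E\xi\rangle$, and hence
\[
\Var(\widetilde C_X(u))=u^\top \Cov(\xi)\,u .
\]
Integrating over the sphere, the key step is the second-moment identity
\[
\int_{\Sph} u u^\top\,d\sigma(u)=\tfrac1d I_d .
\]
I would prove it in three short steps. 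By rotation invariance of $\sigma$ the matrix commutes with every orthogonal matrix, so it is a multiple of $I_d$. Its trace is $\int\|u\|^2\,d\sigma=1$, which fixes the multiple as $1/d$. For $d=1$ one checks directly that $\tfrac12((-1)^2+1^2)=1$.

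The identity then gives
\[
\int u^\top \Sigma u\,d\sigma=\tr\Big(\Sigma\int uu^\top d\sigma\Big)=\tfrac1d\tr\Sigma .
\]
Fubini is justified because $\E\|\xi\|^2<\infty$.

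\textbf{Part (b).} The same computation applies with $\Cov(\langle u,\xi\rangle,\langle u,\eta\rangle)=u^\top\Cov(\xi,\eta)u$, where $\Cov(\xi,\eta)=\E[(\xi-\E\xi)(\eta-\E\eta)^\top]$. This yields $\Cov_{\mathrm{loc}}(X,Y)=\tfrac1d\tr\Cov(\xi,\eta)$. Dividing by $\sqrt{\Var_{\mathrm{loc}}(X)\Var_{\mathrm{loc}}(Y)}$, computed in (a), the factors $1/d$ cancel and we obtain the stated formula. When $d=1$ the traces reduce to the scalar covariance and variances, which gives the Pearson correlation.

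\textbf{Part (c).} If $X=-X$ a.s., then $h_X(-u)=h_{-X}(u)=h_X(u)$, using the identity from the proof of Proposition~\ref{prop:signflip}. Therefore $C_X\equiv0$ a.s., so $\widetilde C_X\equiv 0$ and every integrand in $\Cov_{\mathrm{loc}}(X,Y)$ vanishes. In particular $\Var_{\mathrm{loc}}(X)=0$. Alternatively, the conclusion follows from Cauchy--Schwarz in Proposition~\ref{prop:PSD-CS}.

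The only step that is not a one-line computation is the isotropy identity $\int uu^\top d\sigma=I_d/d$, which I handle by the rotation-invariance and trace argument above.
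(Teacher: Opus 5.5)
Your proof is correct and follows the same route as the paper: you compute $h_X(u)=\langle u,\xi\rangle$ to read off $W_X\equiv 0$ and $C_X(u)=\langle u,\xi\rangle$, reduce the integrands to $u^\top\Cov(\xi)u$ (resp.\ $u^\top\Cov(\xi,\eta)u$), integrate using $\int_{\mathbb S^{d-1}}uu^\top\,d\sigma(u)=\tfrac1d I_d$, and use $h_X(-u)=h_X(u)$ for (c). The only addition is that you justify the isotropy identity (invariance of $\sigma$ under all of $O(d)$ plus the trace normalization), which the paper simply quotes.
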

\begin{proof}
	For (a), $h_X(u)=\langle u,\xi\rangle$, hence $W_X\equiv 0$ and $C_X(u)=\langle u,\xi\rangle$.
	Also $\Var(\langle u,\xi\rangle)=u^\top\Sigma u$ with $\Sigma=\Cov(\xi)$ and
	$\int_{\mathbb S^{d-1}}uu^\top\,d\sigma(u)=\frac1d I_d$, giving the trace formula.
	Part (b) is analogous using $\Cov(\langle u,\xi\rangle,\langle u,\eta\rangle)=u^\top\Cov(\xi,\eta)\,u$.
	Part (c) follows from $h_X(-u)=h_X(u)$.
\end{proof}

\begin{proposition}[Contributions and bounds]\label{prop:contrib}
	Assume $\Var_{\mathrm{tot}}(X)\Var_{\mathrm{tot}}(Y)>0$. Define
	\[
	\kappa_{\mathrm{loc}}(X,Y):=\frac{\Cov_{\mathrm{loc}}(X,Y)}{\sqrt{\Var_{\mathrm{tot}}(X)\Var_{\mathrm{tot}}(Y)}},
	\;
	\kappa_{\mathrm{size}}(X,Y):=\frac{\Cov_{\mathrm{size}}(X,Y)}{\sqrt{\Var_{\mathrm{tot}}(X)\Var_{\mathrm{tot}}(Y)}}.
	\]
	Then $\Corr_{\mathrm{tot}}(X,Y)=\kappa_{\mathrm{loc}}(X,Y)+\kappa_{\mathrm{size}}(X,Y)$ and
	\[
	|\kappa_{\mathrm{loc}}(X,Y)|\le \sqrt{\pi_{\mathrm{loc}}(X)\pi_{\mathrm{loc}}(Y)},
	\;
	|\kappa_{\mathrm{size}}(X,Y)|\le \sqrt{\pi_{\mathrm{size}}(X)\pi_{\mathrm{size}}(Y)},
	\]
	hence $|\kappa_{\mathrm{loc}}(X,Y)|+|\kappa_{\mathrm{size}}(X,Y)|\le 1$ and $|\Corr_{\mathrm{tot}}(X,Y)|\le 1$.
\end{proposition}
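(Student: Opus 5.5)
The decomposition $\Corr_{\mathrm{tot}}(X,Y)=\kappa_{\mathrm{loc}}(X,Y)+\kappa_{\mathrm{size}}(X,Y)$ is immediate from Definition~\ref{def:1}: $\Cov_{\mathrm{tot}}=\Cov_{\mathrm{size}}+\Cov_{\mathrm{loc}}$ (with no cross-terms, by Proposition~\ref{prop:orth-add}). Dividing by $\sqrt{\Var_{\mathrm{tot}}(X)\Var_{\mathrm{tot}}(Y)}>0$ gives the identity.

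For the component bounds I will invoke Proposition~\ref{prop:PSD-CS} with $m=2$ separately for $\star=\mathrm{loc}$ and $\star=\mathrm{size}$. This gives
\[
|\Cov_{\mathrm{loc}}(X,Y)|\le\sqrt{\Var_{\mathrm{loc}}(X)\Var_{\mathrm{loc}}(Y)}
\]
and the analogous inequality for size. Importantly, this holds even when a component variance vanishes: the $2\times2$ PSD matrix then forces the off-diagonal entry to be zero. So I do not need positivity of the component variances, which matters because, e.g., $\Var_{\mathrm{loc}}$ may be zero for centrally symmetric sets (Proposition~\ref{prop:degenerate}(c)). Dividing by $\sqrt{\Var_{\mathrm{tot}}(X)\Var_{\mathrm{tot}}(Y)}$ and using $\pi_{\mathrm{loc}}(\cdot)=\Var_{\mathrm{loc}}(\cdot)/\Var_{\mathrm{tot}}(\cdot)$ yields $|\kappa_{\mathrm{loc}}|\le\sqrt{\pi_{\mathrm{loc}}(X)\pi_{\mathrm{loc}}(Y)}$. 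By the additivity $\Var_{\mathrm{tot}}=\Var_{\mathrm{size}}+\Var_{\mathrm{loc}}$ we have $\pi_{\mathrm{size}}=\Var_{\mathrm{size}}/\Var_{\mathrm{tot}}$, so the same argument gives the size bound.

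Finally, write $p=\pi_{\mathrm{loc}}(X)$ and $q=\pi_{\mathrm{loc}}(Y)$. Both lie in $[0,1]$ since the component variances are nonnegative and sum to the total. Cauchy--Schwarz in $\mathbb{R}^2$ gives
\[
|\kappa_{\mathrm{loc}}|+|\kappa_{\mathrm{size}}|\le\sqrt{pq}+\sqrt{(1-p)(1-q)}\le\sqrt{p+(1-p)}\,\sqrt{q+(1-q)}=1,
\]
and the triangle inequality then gives $|\Corr_{\mathrm{tot}}(X,Y)|\le1$. There is no real obstacle; the only point needing care is the degenerate case where a component variance is zero, which the PSD formulation handles.
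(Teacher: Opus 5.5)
Your proposal is correct and follows the paper's proof essentially step for step. The identity comes from the additive definition of $\Cov_{\mathrm{tot}}$, the component bounds come from Proposition~\ref{prop:PSD-CS} with $m=2$ divided by $\sqrt{\Var_{\mathrm{tot}}(X)\Var_{\mathrm{tot}}(Y)}$, and the sum bound comes from Cauchy--Schwarz applied to $(\sqrt{\pi_{\mathrm{loc}}},\sqrt{\pi_{\mathrm{size}}})$. Your remark that the $2\times 2$ PSD form already covers a vanishing component variance is a worthwhile detail that the paper leaves implicit.
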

\begin{proof}
	By Proposition~\ref{prop:PSD-CS} applied to each component,\\
	$|\Cov_{\mathrm{loc}}(X,Y)|\le \sqrt{\Var_{\mathrm{loc}}(X)\Var_{\mathrm{loc}}(Y)}$ and similarly for size.
	Divide by $\sqrt{\Var_{\mathrm{tot}}(X)\Var_{\mathrm{tot}}(Y)}$ to obtain the two bounds.\\
	For the sum bound, apply Cauchy--Schwarz to the vectors
	$(\sqrt{\pi_{\mathrm{loc}}(X)},\sqrt{\pi_{\mathrm{size}}(X)})$ and
	$(\sqrt{\pi_{\mathrm{loc}}(Y)},\sqrt{\pi_{\mathrm{size}}(Y)})$.
\end{proof}

\section{Limit theorems for $\rho$-mixing set-valued processes}\label{sec:limits}

We now establish probabilistic limit results for weakly dependent set-valued processes.
The key input is the correlation-based dependence structure introduced in
Section~\ref{sec:New_definitions}, which allows us to control second moments of
Minkowski averages through the even--odd decomposition of support functions.

\subsection{Chebyshev inequalities}
\subsubsection{Sphere-integrated Chebyshev inequality}

\begin{definition}[Weakly stationary set-valued sequence]
	A sequence $(X_i)_{i\in\mathbb Z}$ of set-valued random variables in $\mathbb R^d$
	is called \emph{weakly stationary} if:
	\begin{enumerate}
		\item for every $u\in\mathbb S^{d-1}$, the mean $\E h_{X_i}(u)$ does not depend on $i$;
		\item for every $u,v\in\mathbb S^{d-1}$ and $k\in\mathbb Z$,
		$\Cov(h_{X_i}(u),h_{X_{i+k}}(v))$ depends only on $k$.
	\end{enumerate}
\end{definition}

\begin{theorem}[Sphere-integrated Chebyshev inequality: componentwise]
	\label{thm:ChebyshevIntegrated}
	Let $X_1,\dots,X_n$ be weakly stationary.
	For $\star\in\{\mathrm{size},\mathrm{loc}\}$ define
	\[
	S_n^{(\star)}(u):=\frac1n\sum_{k=1}^n \widetilde{\Phi}^{(\star)}_{X_k}(u).
	\]
	Then for every $\varepsilon>0$,
	\begin{equation}\label{eq:ChebCompProb}
		\PP\!\left(\|S_n^{(\star)}\|_{2,\sigma}\ge\varepsilon\right)
		\le \frac{1}{\varepsilon^2}
		\int_{\mathbb S^{d-1}}\Var\!\big(S_n^{(\star)}(u)\big)\,d\sigma(u),
	\end{equation}
	and
	\begin{equation}\label{eq:ChebCompVarBound}
		\begin{aligned}
			\int_{\mathbb S^{d-1}}\Var\!\big(S_n^{(\star)}(u)\big)\,d\sigma(u)
			&\le \frac{1}{n}\Var_\star(X_1) \\
			&\quad + \frac{2}{n}\sum_{k=1}^{n-1}\Big(1-\frac{k}{n}\Big)
			\int_{\mathbb S^{d-1}}
			\big|\Cov(\widetilde{\Phi}^{(\star)}_{X_0}(u),
			\widetilde{\Phi}^{(\star)}_{X_k}(u))\big|\,d\sigma(u).
		\end{aligned}
	\end{equation}
\end{theorem}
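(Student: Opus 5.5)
The plan has two parts: Markov's inequality applied to the squared sphere norm, followed by a standard variance expansion of a stationary average carried out pointwise in $u$ and then integrated.

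\emph{Step 1: the probability bound \eqref{eq:ChebCompProb}.} Apply Markov's inequality to the nonnegative random variable $\|S_n^{(\star)}\|_{2,\sigma}^2$:
\[
\PP\big(\|S_n^{(\star)}\|_{2,\sigma}\ge\varepsilon\big)\le \varepsilon^{-2}\,\E\|S_n^{(\star)}\|_{2,\sigma}^2 .
\]
By Fubini--Tonelli, which is legitimate under \eqref{eq:moment2} for each $X_k$ since $|\Phi^{(\star)}|\le |h(u)|+|h(-u)|$ and $\sigma$ is symmetric, we get
\[
\E\|S_n^{(\star)}\|_{2,\sigma}^2=\int_{\mathbb S^{d-1}}\E[S_n^{(\star)}(u)^2]\,d\sigma(u).
\]
Because centering is pointwise, $\E S_n^{(\star)}(u)=0$, so $\E[S_n^{(\star)}(u)^2]=\Var(S_n^{(\star)}(u))$. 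This gives \eqref{eq:ChebCompProb}.

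\emph{Step 2: the variance bound \eqref{eq:ChebCompVarBound}.} For fixed $u$, expand
\[
\Var(S_n^{(\star)}(u))=n^{-2}\sum_{i,j=1}^n\Cov\big(\widetilde\Phi^{(\star)}_{X_i}(u),\widetilde\Phi^{(\star)}_{X_j}(u)\big).
\]
The key point is that weak stationarity, although stated for $h$, transfers to the components. Indeed, $\Phi^{(\star)}(u)$ is a linear combination of $h(u)$ and $h(-u)$, so
\[
\Cov\big(W_{X_i}(u),W_{X_{i+k}}(u)\big)=\tfrac14\sum_{s,t\in\{\pm1\}}\Cov\big(h_{X_i}(su),h_{X_{i+k}}(tu)\big),
\]
and the analogous identity with signs holds for $C$. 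Each term on the right depends only on $k$ by condition 2 of the definition. Writing $\gamma_k(u)$ for this common value, with $\gamma_{-k}(u)=\gamma_k(u)$ by symmetry of covariance (using $\Cov(h_{X_i}(u),h_{X_{i-k}}(v))=\Cov(h_{X_{i-k}}(v),h_{X_i}(u))$, which again depends only on the lag), grouping the double sum by lag gives
\[
\Var(S_n^{(\star)}(u))=\frac1n\gamma_0(u)+\frac2n\sum_{k=1}^{n-1}\Big(1-\frac kn\Big)\gamma_k(u).
\]
Then bound $\gamma_k\le|\gamma_k|$ for $k\ge1$ and integrate over the sphere. Finally identify $\int\gamma_0\,d\sigma=\Var_\star(X_1)$ directly from Definition~\ref{def:1}, and note $\gamma_k(u)=\Cov(\widetilde\Phi^{(\star)}_{X_0}(u),\widetilde\Phi^{(\star)}_{X_k}(u))$, since the index $0$ is available for a sequence indexed by $\mathbb Z$.

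\emph{Main obstacle.} The only delicate point is this transfer of stationarity from $h$ to the components $W$ and $C$, which needs condition 2 at pairs of directions $(u,-u)$ and not only at $(u,u)$. The definition provides exactly this through arbitrary $u,v$. The remaining care is integrability: each $\gamma_k(u)$ must be $\sigma$-integrable so the sum can be exchanged with the integral. This follows from Cauchy--Schwarz, $|\gamma_k(u)|\le\sqrt{\gamma_0(u)\gamma_0(u)}=\gamma_0(u)$, together with $\int\gamma_0\,d\sigma<\infty$ under \eqref{eq:moment2}.
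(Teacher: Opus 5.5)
Your proposal is correct and follows the paper's proof essentially step for step. You apply Chebyshev/Markov to $\|S_n^{(\star)}\|_{2,\sigma}^2$ with Fubini--Tonelli, expand the variance pointwise in $u$ and group by lag, then bound by absolute values and integrate over the sphere. You also check explicitly that condition 2 of weak stationarity, applied at the direction pairs $(\pm u,\pm u)$, transfers to $W$ and $C$, and you verify integrability via Cauchy--Schwarz; the paper leaves both points implicit in the phrase ``stationarity allows us to replace\ldots''.
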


\begin{proof}
	\emph{Step 1.} Since $\mathbb{E}[S_n^{(\star)}(u)]=0$, apply Chebyshev's inequality:
	\[
	\mathbb{P}\!\left(\|S_n^{(\star)}\|_{2,\sigma}\ge \varepsilon\right)
	=\mathbb{P}\!\left(\|S_n^{(\star)}\|_{2,\sigma}^2\ge \varepsilon^2\right)
	\le \frac{1}{\varepsilon^2}\,\mathbb{E}\big[\|S_n^{(\star)}\|_{2,\sigma}^2\big].
	\]
	Interchanging $\mathbb{E}/\int$ (Fubini–Tonelli),
	\[
	\mathbb{E}\big[\|S_n^{(\star)}\|_{2,\sigma}^2\big]
	=\int_{\mathbb{S}^{d-1}}\mathbb{E}\!\left[S_n^{(\star)}(u)^2\right]\mathrm{d}\sigma(u)
	=\int_{\mathbb{S}^{d-1}}\operatorname{Var}\!\big(S_n^{(\star)}(u)\big)\mathrm{d}\sigma(u),
	\]
	which gives \eqref{eq:ChebCompProb}.
	
	\emph{Step 2.} For fixed $u$,
	\[\begin{aligned}
		\operatorname{Var}\!\big(S_n^{(\star)}(u)\big)
		&=\frac{1}{n^2}\sum_{i,j=1}^n \operatorname{Cov}\!\big(\widetilde{\Phi}_{X_i}^{(\star)}(u),\widetilde{\Phi}_{X_j}^{(\star)}(u)\big)\\
		&=\frac{1}{n^2}\Big(\sum_{i=1}^n \mathrm{Var}(\widetilde\Phi^{(\star)}_{X_i}(u))
		+2\!\!\sum_{1\le i<j\le n}\!\!\mathrm{Cov}(\widetilde\Phi^{(\star)}_{X_i}(u),\widetilde\Phi^{(\star)}_{X_j}(u))\Big).
	\end{aligned}\]
	Stationarity allows us to replace $\mathrm{Var}(\widetilde\Phi^{(\star)}_{X_i}(u))$ by $\mathrm{Var}(\widetilde\Phi^{(\star)}_{X_0}(u))$
	and to group by lags $k=j-i$, yielding the factor $\big(1-\tfrac{k}{n}\big)$; integrating in $u$ gives the claim.
\end{proof}

\begin{theorem}[Sphere-integrated Chebyshev inequality: total version]
	\label{thm:ChebyshevIntegratedTot}
	For every $\varepsilon>0$,
	\[
	\PP\!\left(\|S_n^{(\mathrm{tot})}\|_{2,\sigma}\ge\varepsilon\right)
	\le \frac{1}{\varepsilon^2}
	\int_{\mathbb S^{d-1}}\Var\!\big(S_n^{(\mathrm{tot})}(u)\big)\,d\sigma(u).
	\]
	Moreover,
	\begin{equation}\label{eq:TotVarDecomp}
		\int_{\mathbb S^{d-1}}\Var\!\big(S_n^{(\mathrm{tot})}(u)\big)\,d\sigma(u)
		=
		\int_{\mathbb S^{d-1}}\Var\!\big(S_n^{(\mathrm{size})}(u)\big)\,d\sigma(u)
		+
		\int_{\mathbb S^{d-1}}\Var\!\big(S_n^{(\mathrm{loc})}(u)\big)\,d\sigma(u),
	\end{equation}
	and hence
	\begin{equation}\label{eq:ChebTotVarBound}
		\int_{\mathbb S^{d-1}}\Var\!\big(S_n^{(\mathrm{tot})}(u)\big)\,d\sigma(u)
		\le \text{(size bound)}+\text{(loc bound)}.
	\end{equation}
\end{theorem}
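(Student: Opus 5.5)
The plan is to obtain the probability bound exactly as in Step~1 of the proof of Theorem~\ref{thm:ChebyshevIntegrated}, and then to get the variance splitting \eqref{eq:TotVarDecomp} from the even--odd orthogonality of Proposition~\ref{prop:orth-add}.

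For the probability bound, note that $\E[S_n^{(\mathrm{tot})}(u)]=0$ pointwise. Markov's inequality applied to $\|S_n^{(\mathrm{tot})}\|_{2,\sigma}^2$ gives a bound by $\varepsilon^{-2}\E\|S_n^{(\mathrm{tot})}\|_{2,\sigma}^2$. Fubini--Tonelli, justified by \eqref{eq:moment2} for each $X_i$ and Minkowski's inequality, lets us exchange $\E$ and $\int d\sigma$. This turns the bound into $\varepsilon^{-2}\int\Var(S_n^{(\mathrm{tot})}(u))\,d\sigma(u)$.

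For \eqref{eq:TotVarDecomp}, first observe that $\widetilde h_{X_i}=\widetilde W_{X_i}+\widetilde C_{X_i}$. Averaging over $i$ gives, pointwise in $u$,
\[
S_n^{(\mathrm{tot})}(u)=S_n^{(\mathrm{size})}(u)+S_n^{(\mathrm{loc})}(u).
\]
Since everything is centered, this yields
\[
\Var\big(S_n^{(\mathrm{tot})}(u)\big)=\Var\big(S_n^{(\mathrm{size})}(u)\big)+\Var\big(S_n^{(\mathrm{loc})}(u)\big)+2\,\E\big[S_n^{(\mathrm{size})}(u)S_n^{(\mathrm{loc})}(u)\big].
\]
The key step is to show that the cross term vanishes after integration over the sphere. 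Here $S_n^{(\mathrm{size})}$ is an average of even functions of $u$, so it is even; likewise $S_n^{(\mathrm{loc})}$ is odd. Hence $u\mapsto\E[S_n^{(\mathrm{size})}(u)S_n^{(\mathrm{loc})}(u)]$ is odd. Its integral against the reflection-invariant measure $\sigma$ is therefore zero. Equivalently, expand the product as $n^{-2}\sum_{i,j}\E[\widetilde W_{X_i}(u)\widetilde C_{X_j}(u)]$ and apply \eqref{eq:orthogonality} with $X=X_i$, $Y=X_j$ to each term. Integrability of the cross term follows from Cauchy--Schwarz together with the moment assumption, so splitting the integral into three pieces is legitimate.

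Finally, \eqref{eq:ChebTotVarBound} follows by substituting the componentwise bound \eqref{eq:ChebCompVarBound} of Theorem~\ref{thm:ChebyshevIntegrated} for $\star=\mathrm{size}$ and $\star=\mathrm{loc}$ into the two terms on the right of \eqref{eq:TotVarDecomp}. This step uses weak stationarity of $(X_i)$, which is inherited by $W_{X_i}$ and $C_{X_i}$ because both are linear combinations of $h_{X_i}(\pm u)$.

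The argument contains no real obstacle. The only point needing care is the cross-term cancellation, which must be done \emph{after} integrating in $u$: pointwise in $u$ the cross term need not vanish. It is exactly Proposition~\ref{prop:orth-add} that makes the total variance additive in integrated form.
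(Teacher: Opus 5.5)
Your proposal is correct and follows essentially the same route as the paper. Both arguments apply Chebyshev/Markov to $\|S_n^{(\mathrm{tot})}\|_{2,\sigma}^2$ with Fubini, expand $\Var\big(S_n^{(\mathrm{size})}(u)+S_n^{(\mathrm{loc})}(u)\big)$ pointwise, kill the cross term only after sphere integration via even--odd orthogonality, and then substitute the componentwise bounds \eqref{eq:ChebCompVarBound}. Your explicit checks that the cross term is integrable and that the final bound implicitly requires weak stationarity are sound refinements that the paper leaves unstated.
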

\begin{proof}
	The probability bound \eqref{eq:ChebTotVarBound} follows exactly as in
	Theorem~\ref{thm:ChebyshevIntegrated} by applying Chebyshev's inequality
	to $\|S_n^{(\mathrm{tot})}\|_{2,\sigma}$.
	
	\medskip
	\textbf{Variance decomposition.}
	Recall that
	\[
	S_n^{(\mathrm{tot})}(u)
	=
	S_n^{(\mathrm{size})}(u)+S_n^{(\mathrm{loc})}(u).
	\]
	Hence, for each $u$,
	\[
	\begin{aligned}
		\Var\!\big(S_n^{(\mathrm{tot})}(u)\big)
		&=
		\Var\!\big(S_n^{(\mathrm{size})}(u)\big)
		+
		\Var\!\big(S_n^{(\mathrm{loc})}(u)\big) \\
		&\quad
		+
		2\,\Cov\!\big(
		S_n^{(\mathrm{size})}(u),
		S_n^{(\mathrm{loc})}(u)
		\big).
	\end{aligned}
	\]
	
	Integrating over $\mathbb S^{d-1}$,
	the cross-term vanishes due to even--odd orthogonality:
	\[
	\int_{\mathbb S^{d-1}}
	\E\!\big[
	\widetilde W_{X_i}(u)\,\widetilde C_{X_j}(u)
	\big]\,d\sigma(u)
	=0,
	\qquad \forall\, i,j.
	\]
	Therefore,
	\[
	\int_{\mathbb S^{d-1}}
	\Var\!\big(S_n^{(\mathrm{tot})}(u)\big)\,d\sigma(u)
	=
	\int_{\mathbb S^{d-1}}
	\Var\!\big(S_n^{(\mathrm{size})}(u)\big)\,d\sigma(u)
	+
	\int_{\mathbb S^{d-1}}
	\Var\!\big(S_n^{(\mathrm{loc})}(u)\big)\,d\sigma(u),
	\]
	which proves \eqref{eq:TotVarDecomp}.
	
	\medskip
	\textbf{Final bound.}
	Applying the componentwise bounds \eqref{eq:ChebCompVarBound}
	to the size and location parts separately yields
	\eqref{eq:ChebTotVarBound}.
\end{proof}

\begin{remark}
	The ``total'' inequality can equivalently be written in terms of the centered
	support functions $\widetilde h_{X_i}$.
	However, the decomposition \eqref{eq:TotVarDecomp} highlights a structural fact
	specific to random sets: size and location fluctuations are orthogonal in
	$L^2(\mathbb S^{d-1},\sigma)$.
	This separation will be crucial for defining compatible mixing coefficients
	and for deriving limit theorems under weak dependence.
\end{remark}

\begin{remark}[Note on weak stationarity]
	The weighted factor $(1-k/n)$ in \eqref{eq:ChebCompVarBound}–\eqref{eq:ChebTotVarBound} is the standard form when $(X_k)$ is weakly stationary,
	in which case the covariance integrals depend only on the lag $k$. 
	If not stationary, one may replace the integrals $\int_{\Sph}|\Cov(\cdot,\cdot)|$ by $\sup_i$ over time indices to obtain analogous bounds.
\end{remark}
	
\subsubsection{Compatible $\rho$-mixing coefficient}

\begin{definition}[Maximal correlation (HGR) \cite{bradley2005basic}]
	For two random variables $U,V$ (not necessarily in $L^2(P)$, but all transformations considered will have finite second moments), let $P_U, P_V$ denote their marginal laws. Define
	\[
	\begin{aligned}
		\rho(U,V)
		:= & \sup\Big\{ \Corr\big(f(U),\,g(V)\big):
		\ f\in L^2(P_U),\ g\in L^2(P_V),\\
		& \E[f(U)]=\E[g(V)]=0,\ \Var(f(U))=\Var(g(V))=1 \Big\}\in[0,1].
	\end{aligned}
	\]
	Equivalently,
	\[
	\rho(U,V)=\sup\Big\{ \E\big[f(U)g(V)\big]:
	\ f\in L^2_0(P_U),\ g\in L^2_0(P_V),\ \|f(U)\|_2=\|g(V)\|_2=1 \Big\}.
	\]
\end{definition}

From the definition, it is immediate that for all $U,V\in L^2(P)$ one has the standard inequality
\[
|\Cov(U,V)|\ \le\ \rho(U,V)\,\sqrt{\Var(U)\,\Var(V)}.
\]

\begin{definition}[Compatible $\rho$-mixing]\label{def:compat_rho}
	For a set-valued sequence $(X_i)$, define
	\[
	\rho_{\max}(k):=\max_{\star\in\{\mathrm{size},\mathrm{loc},\mathrm{tot}\}}\ 
	\sup_{u\in\mathbb S^{d-1}}
	\rho\!\Big(\widetilde\Phi^{(\star)}_{X_0}(u),\ \widetilde\Phi^{(\star)}_{X_k}(u)\Big),\qquad k\ge 1.
	\]
	For two sequences $(X_i)$ and $(Y_i)$, define
	\[
	\rho^{XY}_{\max}(k):=\max_{\star\in\{\mathrm{size},\mathrm{loc},\mathrm{tot}\}}
	\ \sup_{u\in\mathbb S^{d-1}}
	\rho\!\Big(\widetilde\Phi^{(\star)}_{X_0}(u),\ \widetilde\Psi^{(\star)}_{Y_k}(u)\Big).
	\]
\end{definition}

\begin{remark}[Compatibility with classical $\rho$-mixing and sharpness]\ \\
	\label{rem:rho-comparison}
	
	\textbf{(a) Domination by classical $\rho$-mixing.} 
	For $\star=\mathrm{tot}$ we have $\Phi^{(\mathrm{tot})}_X=h_X$, so $\rho_{\max}(k)$ contains the factor $\sup_{u\in\mathbb{S}^{d-1}}\rho\!\big(\widetilde{h}_{X_0}(u),\widetilde{h}_{X_k}(u)\big)$. Since we take the supremum over a subclass of $L^2$ functions (the even/odd/total support projections indexed by $u\in\mathbb{S}^{d-1}$), it follows that
	\[
	\rho_{\max}(k)\le\rho_{\mathrm{classic}}(k),
	\]
	where $\rho_{\mathrm{classic}}(k)$ is the classical HGR maximal correlation coefficient $\rho\!\big(\sigma(X_0),\sigma(X_k)\big)$ between the $\sigma$-algebras $\sigma(X_0)$ and $\sigma(X_k)$. Hence, if $\sum_k\rho_{\mathrm{classic}}(k)<\infty$, then also $\sum_k\rho_{\max}(k)<\infty$.
	
	\textbf{(b) Proof of the inequality.} 
	Let $\mathcal{F}_0=\sigma(X_0)$ and $\mathcal{F}_k=\sigma(X_k)$. For any $u\in\mathbb{S}^{d-1}$ and $\star\in\{\mathrm{size},\mathrm{loc},\mathrm{tot}\}$, the random variables $\widetilde{\Phi}_{X_0}^{(\star)}(u)$ and $\widetilde{\Phi}_{X_k}^{(\star)}(u)$ are $\mathcal{F}_0$- and $\mathcal{F}_k$-measurable, respectively. By definition of HGR maximal correlation,
	\[
	\rho\!\big(\widetilde{\Phi}_{X_0}^{(\star)}(u),\widetilde{\Phi}_{X_k}^{(\star)}(u)\big)
	\le\rho(\mathcal{F}_0,\mathcal{F}_k)=\rho_{\mathrm{classic}}(k).
	\]
	Taking the supremum over $u\in\mathbb{S}^{d-1}$ and $\max$ over $\star\in\{\mathrm{size},\mathrm{loc},\mathrm{tot}\}$ yields $\rho_{\max}(k)\le\rho_{\mathrm{classic}}(k)$.
	
	\textbf{(c) Strictness of the inequality.} 
	The coefficient $\rho_{\max}$ is dominated by the classical HGR coefficient
	$\rho_{\mathrm{classic}}$, hence summability of $\rho_{\mathrm{classic}}$
	implies summability of $\rho_{\max}$.
	The inequality can be strict, see Example \ref{ex:strictness}.
	
	\textbf{(d) Practical implication.} 
	The coefficient $\rho_{\max}$ is geometrically meaningful: it captures dependence in the support function projections (which fully determine the set), while ignoring extraneous information in $\sigma(X_i)$ that does not affect the set's geometry. This makes $\rho_{\max}$ a natural and often more tractable mixing coefficient for set-valued processes.
\end{remark}
\begin{example}\label{ex:strictness}
	Let $X_i$ be random discs of radius $R_i$ centered at the origin, where $(R_i)$ is a stationary AR(1) process: $R_i=\varphi R_{i-1}+\varepsilon_i$ with $|\varphi|<1$ and $\varepsilon_i$ i.i.d.\ mean-zero. Then:
	\begin{itemize}
		\item The location component vanishes: $C_{X_i}(u)\equiv 0$ for all $u$.
		\item The size component $W_{X_i}(u)=R_i$ is constant in $u$.
		\item Hence $\rho\!\big(\widetilde{W}_{X_0}(u),\widetilde{W}_{X_k}(u)\big)=|\mathrm{Corr}(R_0,R_k)|=|\varphi|^k$ for all $u$.
		\item Thus $\rho_{\max}(k)=|\varphi|^k$.
	\end{itemize}
	However, the $\sigma$-algebra $\sigma(X_0)$ contains all information about $R_0$, and similarly for $\sigma(X_k)$. If we introduce additional randomness in $X_0$ that is independent of $R_0$ (e.g., a random marking or label), then $\sigma(X_0)$ becomes strictly richer while $\rho_{\max}(k)$ remains unchanged. In such cases, $\rho_{\mathrm{classic}}(k)$ can be strictly larger than $\rho_{\max}(k)$.
\end{example}

\subsection{Law of large numbers under the $L^2(\sigma)$ support norm}\label{subsec:LLN}

Throughout this subsection, we work on $(\Omega,\mathcal F,\PP)$ and assume that
$(X_i)_{i\in\mathbb Z}$ is weakly stationary and satisfies
\[
\sum_{k=1}^{\infty}\rho_{\max}(k)<\infty.
\]
We write $\mathcal H:=L^2(\mathbb S^{d-1},\sigma)$ and $\|\cdot\|_{\mathcal H}=\|\cdot\|_{2,\sigma}$.

\begin{lemma}[Covariance bound via $\rho_{\max}$]\label{lem:cov-rho}
	For every $k\ge1$ and $\star\in\{\mathrm{size},\mathrm{loc}\}$,
	\[
	\Gamma_\star(k)\ \le\ \rho_{\max}(k)\,\Var_\star(X_0).
	\]
\end{lemma}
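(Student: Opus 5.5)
The quantity $\Gamma_\star(k)$ is not defined in the text preceding the lemma. Guided by \eqref{eq:ChebCompVarBound}, I read it as
\[
\Gamma_\star(k):=\int_{\mathbb S^{d-1}}\big|\Cov\big(\widetilde\Phi^{(\star)}_{X_0}(u),\widetilde\Phi^{(\star)}_{X_k}(u)\big)\big|\,d\sigma(u).
\]
With this reading the lemma controls the lag-$k$ term in Theorem~\ref{thm:ChebyshevIntegrated}. The plan is to prove the bound pointwise in $u$ and then integrate over the sphere.

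First I fix $u\in\mathbb S^{d-1}$ and $\star\in\{\mathrm{size},\mathrm{loc}\}$. I apply the standard inequality stated right after the HGR definition, namely $|\Cov(U,V)|\le\rho(U,V)\sqrt{\Var(U)\Var(V)}$, with $U=\widetilde\Phi^{(\star)}_{X_0}(u)$ and $V=\widetilde\Phi^{(\star)}_{X_k}(u)$. If either variance vanishes, the covariance is zero and the inequality holds trivially. Otherwise it follows by taking $f$ and $g$ to be the standardized identity maps in the definition of $\rho$. By Definition~\ref{def:compat_rho}, $\rho(U,V)\le\rho_{\max}(k)$ for every $u$ and each of these two values of $\star$. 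This gives the pointwise bound
\[
\big|\Cov(U,V)\big|\le\rho_{\max}(k)\sqrt{\Var\big(\widetilde\Phi^{(\star)}_{X_0}(u)\big)\,\Var\big(\widetilde\Phi^{(\star)}_{X_k}(u)\big)}.
\]

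Next I use weak stationarity to replace the second variance by the first. The stationarity definition is stated for $h_X$. However, $W_X(u)$ and $C_X(u)$ are linear combinations of $h_X(u)$ and $h_X(-u)$. Hence $\Var(\widetilde\Phi^{(\star)}_{X_k}(u))$ expands into covariances $\Cov(h_{X_k}(\pm u),h_{X_k}(\pm u))$. These depend only on the lag $0$, so they equal the corresponding quantities for $X_0$. The pointwise bound therefore becomes $\rho_{\max}(k)\Var(\widetilde\Phi^{(\star)}_{X_0}(u))$.

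Integrating over $u$ against $\sigma$ then gives $\Gamma_\star(k)\le\rho_{\max}(k)\int\Var(\widetilde\Phi^{(\star)}_{X_0}(u))\,d\sigma(u)=\rho_{\max}(k)\Var_\star(X_0)$, by Definition~\ref{def:1}. Measurability and finiteness of the integrals follow from \eqref{eq:moment2} and Fubini--Tonelli.

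The only delicate step is transferring stationarity from $h$ to the even and odd components. This requires the cross-direction covariances $\Cov(h_{X_i}(u),h_{X_i}(-u))$, which are covered because the definition of weak stationarity allows arbitrary pairs $(u,v)$. Two alternatives are worth noting. If $\Gamma_\star(k)$ is instead meant without absolute values, the claim follows a fortiori. One could also avoid stationarity and use the Cauchy--Schwarz inequality in $L^2(\sigma)$ to obtain the geometric mean $\sqrt{\Var_\star(X_0)\Var_\star(X_k)}$, which collapses to $\Var_\star(X_0)$ under stationarity.
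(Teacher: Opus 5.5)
Your proposal is correct and follows the paper's proof. Both apply the pointwise HGR bound $|\Cov(U,V)|\le\rho(U,V)\sqrt{\Var(U)\Var(V)}$, bound $\rho$ by $\rho_{\max}(k)$ uniformly in $u$, use stationarity to replace the geometric mean of variances by $\Var\big(\widetilde\Phi^{(\star)}_{X_0}(u)\big)$, and integrate over the sphere. You also spell out two points the paper leaves implicit: the meaning of $\Gamma_\star(k)$, and why weak stationarity of $h$ carries over to $W$ and $C$ through the cross-direction lag-$0$ covariances.
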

\begin{proof}
	By the definition of maximal correlation, for each $u\in\mathbb S^{d-1}$,
	\[
	\big|\Cov(U,V)\big|\le \rho(U,V)\sqrt{\Var(U)\,\Var(V)}.
	\]
	Apply this with $U=\Phi^{(\star)}_{X_0}(u)$ and $V=\Phi^{(\star)}_{X_k}(u)$, take the supremum in $u$
	on $\rho$, and integrate in $u$ to obtain
	\[
	\Gamma_\star(k)
	\le
	\rho_{\max}(k)\int_{\mathbb S^{d-1}}
	\Var\!\big(\Phi^{(\star)}_{X_0}(u)\big)\,d\sigma(u)
	=
	\rho_{\max}(k)\,\Var_\star(X_0).
	\]
\end{proof}

Combining Theorem~\ref{thm:ChebyshevIntegrated} with Lemma~\ref{lem:cov-rho} yields
\begin{equation}\label{eq:var-int-bound}
	\int_{\mathbb S^{d-1}}\Var\!\left(S_n^{(\star)}(u)\right)\,d\sigma(u)
	\ \le\ \frac{\Var_\star(X_0)}{n}\left(1+2\sum_{k=1}^{n-1}\rho_{\max}(k)\right),
	\qquad \star\in\{\mathrm{size},\mathrm{loc}\}.
\end{equation}
In particular, by stationarity we may set
\[
K_\rho:=\Var_\star(X_0)\Big(1+2\sum_{k\ge1}\rho_{\max}(k)\Big),
\]
so that for all $m\ge0$ and $\ell\ge1$,
\begin{equation}\label{eq:block-second-moment}
	\mathbb E\Big\|\sum_{i=m+1}^{m+\ell}\!\!\Phi^{(\star)}_{X_i}\Big\|_{2,\sigma}^2\ \le\ K_\rho\,\ell.
\end{equation}

\subsubsection{WLLN under the $L^2(\sigma)$ norm}

\begin{theorem}[WLLN]\label{thm:WLLN}
	Assume $\int_{\mathbb S^{d-1}}\mathbb E\,h_{X_0}(u)^2\,d\sigma(u)<\infty$
	and $\sum_{k\ge1}\rho_{\max}(k)<\infty$.
	Then, for $\star\in\{\mathrm{size},\mathrm{loc}\}$,
	\[
	\|S_n^{(\star)}\|_{2,\sigma}\ \xrightarrow{\ \PP\ }\ 0.
	\]
\end{theorem}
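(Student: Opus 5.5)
The plan is a direct second-moment argument: combine the sphere-integrated Chebyshev inequality of Theorem~\ref{thm:ChebyshevIntegrated} with the covariance bound of Lemma~\ref{lem:cov-rho}, in the form already recorded in \eqref{eq:var-int-bound}. Fix $\star\in\{\mathrm{size},\mathrm{loc}\}$ and $\varepsilon>0$. By \eqref{eq:ChebCompProb},
\[
\PP\big(\|S_n^{(\star)}\|_{2,\sigma}\ge\varepsilon\big)\le \varepsilon^{-2}\int_{\mathbb S^{d-1}}\Var\big(S_n^{(\star)}(u)\big)\,d\sigma(u),
\]
so it suffices to show that the sphere-integrated variance of $S_n^{(\star)}$ tends to $0$.

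First I will check that all quantities involved are finite. Since $W_X(u)^2\le \tfrac12\big(h_X(u)^2+h_X(-u)^2\big)$ and $\sigma$ is invariant under $u\mapsto-u$, we get $\int\E W_{X_0}(u)^2\,d\sigma(u)\le\int\E h_{X_0}(u)^2\,d\sigma(u)<\infty$. The same bound holds for $C_{X_0}$. Hence $\Var_\star(X_0)<\infty$, and Fubini--Tonelli applies as in Step~1 of Theorem~\ref{thm:ChebyshevIntegrated}. By weak stationarity, the marginal variance integrals are the same for every index, so \eqref{eq:ChebCompVarBound} holds with $\Var_\star(X_0)$. Here $\Gamma_\star(k)$ is read as the lag-$k$ integral $\int|\Cov(\widetilde\Phi^{(\star)}_{X_0}(u),\widetilde\Phi^{(\star)}_{X_k}(u))|\,d\sigma(u)$.

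Next I will insert Lemma~\ref{lem:cov-rho}, i.e.\ $\Gamma_\star(k)\le\rho_{\max}(k)\Var_\star(X_0)$, into \eqref{eq:ChebCompVarBound}, and use $0\le 1-k/n\le1$, to obtain \eqref{eq:var-int-bound}:
\[
\int\Var\big(S_n^{(\star)}(u)\big)\,d\sigma(u)\le \frac{\Var_\star(X_0)}{n}\Big(1+2\sum_{k\ge1}\rho_{\max}(k)\Big)=\frac{K_\rho}{n}.
\]
Summability of $\rho_{\max}$ makes $K_\rho$ a finite constant. Therefore the probability is at most $K_\rho/(n\varepsilon^2)\to0$, which is the claim. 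In fact the argument gives the stronger statement of $L^2$ convergence at rate $O(n^{-1})$.

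The only point needing care is the application of the lemma. The $\rho$-bound $|\Cov(U,V)|\le\rho(U,V)\sqrt{\Var U\,\Var V}$ is applied pointwise in $u$, and then $\sqrt{\Var(\Phi_{X_0}(u))\Var(\Phi_{X_k}(u))}=\Var(\Phi_{X_0}(u))$. This equality requires the pointwise variances to coincide across time, which follows from stationarity condition~(2) with $u=v$, applied to $h(u)$, $h(-u)$ and their cross covariances, since $W$ and $C$ are linear combinations of these. Measurability of $u\mapsto\Cov(\cdot,\cdot)$ follows from joint measurability of $h_X$. Beyond these checks, the proof is routine.
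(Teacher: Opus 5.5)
Your proposal is correct and follows the paper's own proof: the paper combines the sphere-integrated Chebyshev inequality (Theorem~\ref{thm:ChebyshevIntegrated}) with the bound \eqref{eq:var-int-bound}, obtained from Lemma~\ref{lem:cov-rho}, to get $\PP(\|S_n^{(\star)}\|_{2,\sigma}\ge\varepsilon)\le K_\rho/(\varepsilon^2 n)\to0$, and your extra checks (finiteness of $\Var_\star(X_0)$, index-independence of the pointwise variances) fill in details the paper leaves implicit. One small correction: the cross term $\Cov(h_{X_i}(u),h_{X_i}(-u))$ needs stationarity condition~(2) with $k=0$ and $v=-u$, not $v=u$; the definition of weak stationarity covers this case.
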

\begin{proof}
	From \eqref{eq:var-int-bound} and Theorem~\ref{thm:ChebyshevIntegrated},
	\[
	\PP\!\left(\|S_n^{(\star)}\|_{2,\sigma}\ge\varepsilon\right)
	\le \frac{K_\rho}{\varepsilon^2\,n}
	\xrightarrow[n\to\infty]{} 0,
	\qquad \star\in\{\mathrm{size},\mathrm{loc}\}.
	\]
\end{proof}

\subsubsection{SLLN under the $L^2(\sigma)$ norm}

\begin{theorem}[SLLN]\label{thm:SLLN}
	Assume $\sum_{k\ge1}\rho_{\max}(k)<\infty$ and
	$\int_{\mathbb S^{d-1}}\mathbb E\,h_{X_0}(u)^2\,d\sigma(u)<\infty$.
	Then, for $\star\in\{\mathrm{size},\mathrm{loc}\}$,
	\[
	\|S_n^{(\star)}\|_{2,\sigma}\ \xrightarrow{\ \mathrm{a.s.}\ }\ 0.
	\]
\end{theorem}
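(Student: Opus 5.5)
The plan is the classical Hilbert-space route: a Borel--Cantelli argument along a sparse subsequence, followed by control of the fluctuations between subsequence points through a maximal inequality. Work in $\mathcal H$ and set $T_n:=\sum_{i=1}^n\widetilde\Phi^{(\star)}_{X_i}$, so that $S_n^{(\star)}=T_n/n$. Everything rests on the block second-moment bound \eqref{eq:block-second-moment}, which follows from Lemma~\ref{lem:cov-rho} and Theorem~\ref{thm:ChebyshevIntegrated}. Applied to centered profiles, it gives $\E\|T_{m+\ell}-T_m\|_{2,\sigma}^2\le K_\rho\,\ell$ uniformly in $m$.

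\emph{Step 1 (subsequence).} Take $n_j=j^2$. By \eqref{eq:var-int-bound} and Chebyshev (Theorem~\ref{thm:ChebyshevIntegrated}),
\[
\PP\big(\|S_{j^2}^{(\star)}\|_{2,\sigma}\ge\varepsilon\big)\le \frac{K_\rho}{\varepsilon^2 j^2}.
\]
This is summable in $j$, so Borel--Cantelli gives $\|S_{j^2}^{(\star)}\|_{2,\sigma}\to0$ a.s.

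\emph{Step 2 (interpolation).} For $j^2\le n<(j+1)^2$, write
\[
\|S_n^{(\star)}\|_{2,\sigma}\le \|S_{j^2}^{(\star)}\|_{2,\sigma}+\frac1{j^2}D_j,
\qquad
D_j:=\max_{j^2<n<(j+1)^2}\|T_n-T_{j^2}\|_{2,\sigma}.
\]
It then suffices to show $D_j/j^2\to0$ a.s.

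\emph{Step 3 (maximal bound; the main obstacle).} The key estimate, and the main obstacle, is $\E D_j^2$. The block has length $L=2j+1$. The crude bound $D_j\le\sum_{i}\|\widetilde\Phi_{X_i}\|_{2,\sigma}$ gives $\E D_j^2\le L^2\,\Var_\star(X_0)=O(j^2)$, since by Minkowski in $L^2(\PP)$ we have $(\E D_j^2)^{1/2}\le L\,(\E\|\widetilde\Phi_{X_0}\|^2)^{1/2}$. Then
\[
\sum_j\PP(D_j\ge\varepsilon j^2)\le\sum_j \frac{C j^2}{\varepsilon^2 j^4}<\infty,
\]
and Borel--Cantelli gives $D_j/j^2\to0$ a.s. This requires only stationarity of second moments, not mixing. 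I expect this step to be the one needing care, because a sharper bound is only needed if the subsequence is made sparser. To use the mixing more efficiently, one would apply a Móricz/Serfling-type maximal inequality: superadditivity of the bound $K_\rho\ell$ in \eqref{eq:block-second-moment} yields $\E\max_{\ell\le L}\|T_{m+\ell}-T_m\|^2\le K_\rho L(\log_2 2L)^2$, valid in Hilbert spaces. The crude bound already suffices with $n_j=j^2$, so I would present that first and mention the Móricz route as an alternative.

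\emph{Step 4 (conclusion).} Combining Steps 1--3 and taking $\varepsilon$ along a countable sequence $\varepsilon=1/r$ gives $\|S_n^{(\star)}\|_{2,\sigma}\to0$ a.s. for $\star\in\{\mathrm{size},\mathrm{loc}\}$. The same argument applied to $\mathrm{tot}$ also works, via \eqref{eq:TotVarDecomp}. Measurability of $\|S_n\|_{2,\sigma}$ follows from joint measurability and Fubini, as in Section~\ref{sec:Notation}.
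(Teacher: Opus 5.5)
Your proof is correct, but it departs from the paper's at the interpolation step.

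\textbf{The paper's route.} The paper works along the dyadic subsequence $n=2^r$. With gaps of length $2^r$, the triangle-inequality bound you use would only give $\PP\big(\max_{\ell\le 2^r}\|U_\ell\|_{\mathcal H}>\varepsilon 2^r\big)\le \Var_\star(X_0)/\varepsilon^2$, which is not summable. The paper therefore needs a genuine maximal estimate. It chains dyadically inside each block, applies \eqref{eq:block-second-moment} at every scale, and uses a union bound, which yields a tail of order $(r+1)^3 2^{-r}$.

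\textbf{Your route.} You take $n_j=j^2$ instead. This is still summable because $\E\|S_n^{(\star)}\|_{2,\sigma}^2\le K_\rho/n$. You then absorb gaps of length at most $2j$ by the triangle inequality and Cauchy--Schwarz, which is Rajchman's classical device.

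\textbf{What each buys.} Your argument is shorter and uses no maximal inequality at all. The mixing hypothesis enters only along the subsequence, and the interpolation needs nothing beyond $\E\|\widetilde\Phi^{(\star)}_{X_i}\|_{2,\sigma}^2=\Var_\star(X_0)$. It also sidesteps the paper's claim that the blocks $S_k^{(j)}$ are equal in distribution. Weak stationarity does not give that, though the gap is harmless since only their second moments are used. The paper's chaining is heavier and loses log factors. Its advantage is that it still works when the variance bound decays so slowly (say only logarithmically in $n$) that Chebyshev bounds are summable only along geometric subsequences. Long gaps are then unavoidable, and some maximal inequality is required, such as the paper's chaining or the M\'oricz-type bound you mention.
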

\begin{proof}
	Let $Z_i:=\Phi^{(\star)}_{X_i}\in\mathcal H:=L^2(\mathbb S^{d-1},\sigma)$ with $\E Z_i=0$ and
	$\|\cdot\|_{\mathcal H}=\|\cdot\|_{2,\sigma}$. Denote partial sums $T_n:=\sum_{i=1}^n Z_i$
	and averages $S_n:=T_n/n$ (i.e.\ $S_n^{(\star)}$).
	
	\medskip
	\noindent\emph{Step 1 (Convergence along a dyadic subsequence).}
	Take $n=2^r$. From \eqref{eq:block-second-moment} with $m=0$ and $\ell=2^r$,
	\[
	\E\|S_{2^r}\|_{\mathcal H}^2
	=
	\frac{1}{2^{2r}}\E\|T_{2^r}\|_{\mathcal H}^2
	\le \frac{K_\rho}{2^r}.
	\]
	By Markov,
	\[
	\PP(\|S_{2^r}\|_{\mathcal H}>\varepsilon)\le \frac{K_\rho}{\varepsilon^2\,2^r},
	\]
	so $\sum_{r\ge1}\PP(\|S_{2^r}\|_{\mathcal H}>\varepsilon)<\infty$. By Borel--Cantelli,
	$\|S_{2^r}\|_{\mathcal H}\to 0$ a.s.
	
	\medskip
	\noindent\emph{Step 2 (Maximal bound within a dyadic block).}
	Fix $r\ge1$, let $L:=2^r$ and $m:=2^r$. For the within-block sums
	$U_\ell:=\sum_{i=1}^\ell Z_{m+i}$, $1\le\ell\le L$, we show that
	\begin{equation}\label{eq:block-max-app}
		\sum_{r=1}^\infty
		\PP\!\left(\max_{1\le\ell\le 2^r}\|U_\ell\|_{\mathcal H}>\varepsilon\,2^r\right)
		<\infty,
		\quad\Rightarrow\quad
		\frac{1}{2^r}\max_{1\le\ell\le 2^r}\|U_\ell\|_{\mathcal H}\xrightarrow{\mathrm{a.s.}}0.
	\end{equation}
	
	Use dyadic chaining. For each $j=0,1,\dots,r$, define grid points
	$t_{j,k}:=k\,2^{r-j}$, $k=0,1,\dots,2^j$. For $1\le \ell\le L$ and
	$p_j(\ell):=t_{j,\lfloor \ell/2^{r-j}\rfloor}$, we have $p_0(\ell)=0$, $p_r(\ell)=\ell$ and
	\[
	U_\ell=\sum_{j=1}^{r}\big(U_{p_j(\ell)}-U_{p_{j-1}(\ell)}\big).
	\]
	Hence
	\[
	\|U_\ell\|_{\mathcal H}\le \sum_{j=1}^{r}
	\max_{1\le k\le 2^j}\big\|U_{t_{j,k}}-U_{t_{j,k-1}}\big\|_{\mathcal H}.
	\]
	Therefore,
	\[
	\max_{1\le\ell\le L}\|U_\ell\|_{\mathcal H}
	\le \sum_{j=0}^{r} M_j,
	\qquad
	M_j:=\max_{1\le k\le 2^j}\big\|U_{t_{j,k}}-U_{t_{j,k-1}}\big\|_{\mathcal H}.
	\]
	Let $t:=\varepsilon L/(2(r+1))=\varepsilon 2^{r-1}/(r+1)$. By a union bound,
	\[
	\PP\!\left(\max_{\ell\le L}\|U_\ell\|_{\mathcal H}>\varepsilon L\right)
	\le \sum_{j=0}^{r}\PP(M_j>t).
	\]
	
	For each $j$, there are $2^{j}$ disjoint blocks $D_1,\ldots,D_{2^j}$ of consecutive indices,
	each of length $2^{r-j}$. Let $S_k^{(j)}:=\sum_{i\in D_k} Z_{m+i}$ for $k=1,\ldots,2^j$.
	Then $M_j=\max_{1\le k\le 2^j}\|S_k^{(j)}\|_{\mathcal H}$.
	By stationarity, all $S_k^{(j)}$ have the same distribution. Hence, by Markov and
	\eqref{eq:block-second-moment},
	\[
	\begin{aligned}
		\PP(M_j>t)
		&\le \sum_{k=1}^{2^j}\PP\!\left(\|S_k^{(j)}\|_{\mathcal H}>t\right)
		\le 2^j\,\frac{\E\|S_1^{(j)}\|_{\mathcal H}^2}{t^2}
		\le 2^j\,\frac{K_\rho\,2^{r-j}}{t^2}
		= \frac{K_\rho\,2^{r}}{t^2}.
	\end{aligned}
	\]
	Substituting $t=\varepsilon 2^{r-1}/(r+1)$ yields
	\[
	\PP(M_j>t)\le \frac{4K_\rho (r+1)^2}{\varepsilon^2\,2^{r}}.
	\]
	Since $\sum_{r\ge1}(r+1)^3/2^r<\infty$, this implies \eqref{eq:block-max-app}.
	
	\medskip
	\noindent\emph{Step 3 (Filling in between dyadics).}
	For $2^r<n\le 2^{r+1}$, write
	\[
	S_n
	= \frac{1}{n}\sum_{i=1}^{2^r} Z_i + \frac{1}{n}\sum_{i=2^r+1}^{n} Z_i
	= \frac{2^r}{n}\,S_{2^r} + \frac{1}{n}\,U_{n-2^r}.
	\]
	Hence
	\[
	\|S_n\|_{\mathcal H}
	\le \|S_{2^r}\|_{\mathcal H}
	+ \frac{1}{2^r}\max_{1\le\ell\le 2^r}\|U_\ell\|_{\mathcal H}.
	\]
	By Steps 1 and 2, both terms converge to $0$ a.s.\ as $r\to\infty$, hence
	$\|S_n\|_{\mathcal H}\to0$ a.s.\ as $n\to\infty$.
\end{proof}

\begin{remark}
	The argument uses only second moments and the summability condition
	$\sum_k\rho_{\max}(k)<\infty$, through the uniform block second-moment bound
	\eqref{eq:block-second-moment}. For stronger maximal inequalities under mixing
	conditions, see Merlev\`ede et al.~\cite{merlevede2009bernstein} and
	Rio~\cite{rio2017asymptotic}.
\end{remark}

\subsubsection{Geometric consequences and the total norm}

Refer to Aumann expectation and support/Mosco/Hausdorff convergence in
\cite{Molchanov2005}. We record the following consequences.

\begin{corollary}
	The Minkowski average $\overline{X}_n:=\tfrac1n(X_1+\cdots+X_n)$ converges in support
	to the Aumann expectation $\E[X_0]$, characterized by the pair
	$\big(\E W_{X_0},\,\E C_{X_0}\big)$.
\end{corollary}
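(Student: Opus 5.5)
Under the standing assumptions of this subsection (weak stationarity, $\sum_k\rho_{\max}(k)<\infty$ and $\int_{\Sph}\E h_{X_0}(u)^2\,d\sigma(u)<\infty$), we read ``converges in support'' as a.s.\ (and in probability) convergence $\|h_{\overline X_n}-\E h_{X_0}\|_{2,\sigma}\to0$. The plan is to reduce this to the componentwise laws of large numbers, Theorems~\ref{thm:WLLN} and \ref{thm:SLLN}, via the even--odd split and the additive structure of Proposition~\ref{prop:orth-add}.

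\textbf{Step 1 (Minkowski linearity).} For Minkowski sums and positive scalings, $h_{\overline X_n}=\frac1n\sum_{i=1}^n h_{X_i}$. Taking even and odd parts gives $W_{\overline X_n}=\frac1n\sum_i W_{X_i}$ and $C_{\overline X_n}=\frac1n\sum_i C_{X_i}$.

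\textbf{Step 2 (identifying the limit).} The mean $\E h_{X_i}(u)$ does not depend on $i$ by weak stationarity, so $W_{\overline X_n}-\E W_{X_0}=S_n^{(\mathrm{size})}$ and $C_{\overline X_n}-\E C_{X_0}=S_n^{(\mathrm{loc})}$ pointwise in $u$. The moment assumption \eqref{eq:moment2} ensures that $X_0$ is integrably bounded in the $L^2$ sense. By the standard support-function characterization of the Aumann expectation (Molchanov), $h_{\E X_0}(u)=\E h_{X_0}(u)$. Splitting into even and odd parts then shows that $\E X_0$ is characterized by the pair $(\E W_{X_0},\E C_{X_0})$: the support function $\E W_{X_0}+\E C_{X_0}$ determines the set uniquely.

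\textbf{Step 3 (orthogonal combination).} Since $S_n^{(\mathrm{size})}$ is even and $S_n^{(\mathrm{loc})}$ is odd, they are orthogonal in $\mathcal H$ for every $\omega$. This is the pathwise version of \eqref{eq:orthogonality}. Hence
\[
\|h_{\overline X_n}-h_{\E X_0}\|_{2,\sigma}^2=\|S_n^{(\mathrm{size})}\|_{2,\sigma}^2+\|S_n^{(\mathrm{loc})}\|_{2,\sigma}^2 .
\]
Theorems~\ref{thm:WLLN} and \ref{thm:SLLN} give convergence of both terms to $0$ in probability and almost surely. Therefore $\|h_{\overline X_n}-h_{\E X_0}\|_{2,\sigma}\to0$ in the same modes, which is convergence in support.

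\textbf{Main obstacle.} The delicate point is Step 2: justifying that $\E h_{X_0}$ is the support function of a convex compact set equal to the Aumann expectation. This requires integrable boundedness, $\E\|X_0\|<\infty$, rather than merely the sphere-integrated $L^2$ bound. For convex bodies, $\|K\|=\sup_u h_K(u)$ is controlled by $\int_{\Sph}|h_K|\,d\sigma$ up to a dimensional constant, once $K$ is suitably translated using the Steiner point. I would first try to use this comparison, falling back on stating integrable boundedness as an explicit assumption if it fails. Strengthening $L^2(\sigma)$ convergence to Hausdorff convergence is not claimed here and would need an additional argument, for instance equi-Lipschitz bounds on support functions.
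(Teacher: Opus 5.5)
Your argument is correct and follows the route the paper intends: the paper records the corollary without proof as a consequence of Theorems~\ref{thm:WLLN}--\ref{thm:SLLN}, via Minkowski linearity of support functions, the componentwise laws of large numbers, and recombination of the size and location parts. The paper does that recombination with the triangle inequality in its ``Total norm'' paragraph, while your pathwise Pythagorean identity is an equally valid and slightly sharper variant. Your ``main obstacle'' also resolves positively without any extra assumption: if $x^*\in K$ attains $\|K\|$, then $h_K(u)\ge\langle u,x^*\rangle$ for all $u$ gives $\|K\|\le c_d^{-1}\int_{\mathbb S^{d-1}}|h_K|\,d\sigma$, so $\E\|X_0\|<\infty$ follows from \eqref{eq:moment2} and the identification $h_{\E X_0}=\E h_{X_0}$ holds.
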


\begin{corollary}
	If $X_i$ and $\E[X_0]$ are convex, closed, and uniformly bounded, then the above support
	convergence implies Mosco convergence; moreover, by the uniform Lipschitz property of
	$h_X$ in $u$, one also has Hausdorff convergence.
\end{corollary}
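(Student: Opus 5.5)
The statement is the second corollary: under convexity, closedness and uniform boundedness, support convergence of the Minkowski averages $\overline X_n$ to $\E[X_0]$ gives Mosco and Hausdorff convergence. The plan is to pass from $L^2(\sigma)$ convergence of support functions to uniform convergence on $\Sph$. Uniform convergence of support functions is equivalent to Hausdorff convergence of convex compact sets, since $d_H(K,L)=\sup_{u\in\Sph}|h_K(u)-h_L(u)|$. Hausdorff convergence of convex compacts inside a fixed ball in turn implies Mosco convergence.

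\emph{Step 1 (convergence in $L^2(\sigma)$).} Take $Y_n:=h_{\overline X_n}-h_{\E X_0}$. Since $h_{\overline X_n}=\frac1n\sum_i h_{X_i}$ and $h_{\E X_0}=\E h_{X_0}$ for the Aumann expectation of integrably bounded sets, we have $Y_n=S_n^{(\mathrm{size})}+S_n^{(\mathrm{loc})}$ pointwise in $u$. The even and odd parts are orthogonal (Proposition~\ref{prop:orth-add}, applied pathwise), so
\[
\|Y_n\|_{2,\sigma}^2=\|S_n^{(\mathrm{size})}\|_{2,\sigma}^2+\|S_n^{(\mathrm{loc})}\|_{2,\sigma}^2 .
\]
By Theorem~\ref{thm:SLLN}, this tends to $0$ almost surely, and in probability by Theorem~\ref{thm:WLLN}.

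\emph{Step 2 (upgrade to uniform convergence).} Suppose all $X_i$ and $\E X_0$ lie in a ball of radius $R$. Then every support function $h$ involved is $R$-Lipschitz on $\Sph$, because $|h_K(u)-h_K(v)|\le R\|u-v\|$. Hence $Y_n$ is $2R$-Lipschitz. Let $\delta:=\|Y_n\|_\infty$, attained at some $u_0$. Then $|Y_n|\ge\delta/2$ on the cap $B(u_0,\delta/(4R))\cap\Sph$. This cap has $\sigma$-measure at least $c_d(\delta/4R)^{d-1}$ for $\delta\le 4R$, so
\[
\|Y_n\|_{2,\sigma}^2\ \ge\ \frac{c_d}{4}\,\delta^2\Big(\frac{\delta}{4R}\Big)^{d-1},
\qquad\text{that is,}\qquad
\|Y_n\|_\infty\le C_{d,R}\,\|Y_n\|_{2,\sigma}^{2/(d+1)} .
\]
For $d=1$ the sphere is the two-point set and the sup and $L^2$ norms are trivially equivalent. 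Combining with Step 1 gives $\sup_u|h_{\overline X_n}(u)-h_{\E X_0}(u)|\to0$ almost surely, which is Hausdorff convergence.

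\emph{Step 3 (Mosco).} Hausdorff convergence of compact sets implies Kuratowski convergence. In finite dimensions Mosco convergence coincides with Kuratowski convergence, since weak and strong topologies agree on $\mathbb R^d$. For the liminf inclusion, take $x\in\E X_0$ and choose nearest points $x_n\in\overline X_n$; then $\|x_n-x\|\le d_H\to0$. For the limsup inclusion, cluster points of $x_{n_k}\in\overline X_{n_k}$ lie in $\E X_0$ because $\operatorname{dist}(x_{n_k},\E X_0)\le d_H\to0$ and $\E X_0$ is closed.

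\emph{Main obstacle.} The substantive step is Step 2, the interpolation from $L^2(\sigma)$ to the sup norm. It relies on the uniform Lipschitz bound, and this is exactly where uniform boundedness is needed. I also need the Minkowski averages to be uniformly bounded; this follows because they are convex combinations of sets in the same ball. Identifying $\E h_{X_0}$ with the support function of the Aumann expectation is standard (see Molchanov) and I will take it as given.
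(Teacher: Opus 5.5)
Your argument is correct and follows the route the paper only sketches inline (the corollary has no separate proof): the uniform Lipschitz bound on support functions upgrades the $L^2(\sigma)$ convergence from Theorems~\ref{thm:WLLN}--\ref{thm:SLLN} to uniform convergence on $\Sph$, i.e.\ Hausdorff convergence. Your cap estimate $\|Y_n\|_\infty\le C_{d,R}\|Y_n\|_{2,\sigma}^{2/(d+1)}$ makes that upgrade explicit, and deriving Mosco from Hausdorff convergence (via Kuratowski convergence, which coincides with Mosco in $\mathbb R^d$) is a cleaner ordering than the paper's, which asserts Mosco first.
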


\paragraph{Total norm}
Let $T_X:=\widetilde W_X+\widetilde C_X$. Then
\[
\Big\|\frac{1}{n}\sum_{i=1}^n T_{X_i}\Big\|_{2,\sigma}
\ \le\ \|S_n^{(\mathrm{size})}\|_{2,\sigma}+\|S_n^{(\mathrm{loc})}\|_{2,\sigma}.
\]
Hence the WLLN/SLLN also hold for the total norm, which ensures consistency of the
estimators $\Corr_{\mathrm{tot}}$, $\kappa$, and $\pi$.


\subsection{Stronger limit theorems under $\rho_{\max}$--mixing}\label{subsec:stronger}

\subsubsection{Setup, notation, and assumptions}\label{subsubsec:setup}

Let $\mathcal H:=L^2(\mathbb S^{d-1},\sigma)$ with inner product
$\langle f,g\rangle=\int_{\mathbb S^{d-1}} f(u)g(u)\,d\sigma(u)$ and norm
$\|\cdot\|_{\mathcal H}=\|\cdot\|_{2,\sigma}$. For a weakly stationary set--valued sequence
$(X_i)_{i\in\mathbb Z}$, define
\[
Z_i^{(\mathrm{size})}(u):=W_{X_i}(u)-\E W_{X_0}(u),\qquad
Z_i^{(\mathrm{loc})}(u):=C_{X_i}(u)-\E C_{X_0}(u),
\]
and $Z_i^{(\mathrm{tot})}:=Z_i^{(\mathrm{size})}+Z_i^{(\mathrm{loc})}$.
Put $T_n^{(\star)}:=\sum_{i=1}^n Z_i^{(\star)}$ and $S_n^{(\star)}:=n^{-1}T_n^{(\star)}$ for
$\star\in\{\mathrm{size},\mathrm{loc},\mathrm{tot}\}$.

\paragraph{Maximal compatible correlation.}
For $k\ge1$, set
\[
\rho_{\max}(k):=\max_{\star\in\{\mathrm{size},\mathrm{loc},\mathrm{tot}\}}
\ \sup_{u\in\mathbb S^{d-1}}
\rho\!\big(\widetilde\Phi_{X_0}^{(\star)}(u), \widetilde\Phi_{X_k}^{(\star)}(u)\big),
\qquad \widetilde\Phi=\Phi-\E\Phi.
\]

\paragraph{Long--run covariance operators.}
For $\star\in\{\mathrm{size},\mathrm{loc}\}$ and $k\in\mathbb Z$ define kernels
\begin{equation}\label{eq:lrv-kernel}
	K_k^{(\star)}(u,v):=\Cov\big(Z_0^{(\star)}(u),Z_k^{(\star)}(v)\big),
\end{equation}
and integral operators
\[
(C_k^{(\star)}f)(u):=\int_{\mathbb S^{d-1}} K_k^{(\star)}(u,v)\,f(v)\,d\sigma(v),
\qquad
\Sigma^{(\star)}:=\sum_{k\in\mathbb Z} C_k^{(\star)}.
\]

\paragraph{Standing assumptions.}
\begin{itemize}
	\item[(M$2+$)] $\displaystyle \int_{\mathbb S^{d-1}} \E\,|h_{X_0}(u)|^{2+\delta}\,d\sigma(u)<\infty$ for some $\delta>0$.
	\item[(Mix)] $\displaystyle \sum_{k\ge1}\rho_{\max}(k)^{\delta/(2+\delta)}<\infty$.
\end{itemize}

\begin{remark}
	Under \textnormal{(M$2+$)} and \textnormal{(Mix)}, Lemma~\ref{lem:lrv} ensures that
	$\Sigma^{(\star)}=\sum_{k\in\mathbb Z}C_k^{(\star)}$ exists as a trace--class operator
	for $\star\in\{\mathrm{size},\mathrm{loc}\}$. We refer to this conclusion as
	\textnormal{(LRV)} (long--run variance exists). Thus \textnormal{(LRV)} is implied by
	\textnormal{(M$2+$)} $+$ \textnormal{(Mix)} and is invoked later only for readability.
\end{remark}

\subsubsection{Even/odd parity}\label{subsubsec:parity}
Let $\mathcal{H}_{\mathrm{even}}:=\{f:f(u)=f(-u)\}$ and
$\mathcal{H}_{\mathrm{odd}}:=\{f:f(u)=-f(-u)\}$. Then
$\mathcal{H}=\mathcal{H}_{\mathrm{even}}\oplus \mathcal{H}_{\mathrm{odd}}$ orthogonally,
$Z_i^{(\mathrm{size})}\in \mathcal{H}_{\mathrm{even}}$,
$Z_i^{(\mathrm{loc})}\in \mathcal{H}_{\mathrm{odd}}$, and cross--covariances vanish upon
sphere integration. Consequently,
$\Sigma^{(\mathrm{tot})}=\Sigma^{(\mathrm{size})}\oplus \Sigma^{(\mathrm{loc})}$.

\subsubsection{Auxiliary lemmas}\label{subsubsec:lemmas}

\begin{lemma}[Maximal correlation inequality]\label{lem:maxcorr}
	Let $\mathcal F,\mathcal G$ be $\sigma$--fields and
	\(
	\rho(\mathcal F,\mathcal G):=\sup\{|\Corr(U,V)|:\ U\in L^2(\mathcal F),\ V\in L^2(\mathcal G),\ \E U=\E V=0\}.
	\)
	Then for any $U\in L^2(\mathcal F)$ and $V\in L^2(\mathcal G)$,
	\[
	|\Cov(U,V)|\ \le\ \rho(\mathcal F,\mathcal G)\,\sqrt{\Var(U)\Var(V)}.
	\]
\end{lemma}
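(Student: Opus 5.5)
This is essentially the definition of $\rho(\mathcal F,\mathcal G)$ unwound, so the plan is a direct normalization argument with two degenerate cases split off first. Fix $U\in L^2(\mathcal F)$ and $V\in L^2(\mathcal G)$.

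\emph{Degenerate case.} If $\Var(U)=0$ or $\Var(V)=0$, then $U$ or $V$ is almost surely constant. Hence $\Cov(U,V)=0$ and the inequality holds trivially, since the right-hand side is nonnegative.

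\emph{Main case.} Otherwise, set
\[
U_0:=\frac{U-\E U}{\sqrt{\Var(U)}},\qquad V_0:=\frac{V-\E V}{\sqrt{\Var(V)}}.
\]
These are centered, have unit variance, and are still $\mathcal F$- and $\mathcal G$-measurable, because we only subtracted a constant and divided by a positive constant. So $U_0$ and $V_0$ are admissible in the supremum defining $\rho(\mathcal F,\mathcal G)$, which gives
\[
|\Corr(U,V)|=|\Corr(U_0,V_0)|=|\E[U_0V_0]|\le \rho(\mathcal F,\mathcal G).
\]
Multiplying through by $\sqrt{\Var(U)\Var(V)}$ yields $|\Cov(U,V)|=\sqrt{\Var(U)\Var(V)}\,|\Corr(U,V)|\le\rho(\mathcal F,\mathcal G)\sqrt{\Var(U)\Var(V)}$.

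\emph{Point to check.} The only subtlety is that the supremum in the definition implicitly ranges over pairs with positive variances, where the correlation is defined. That is exactly why the degenerate case is handled separately, so there is no real obstacle. As a side remark, the same argument with $\mathcal F=\sigma(U')$ and $\mathcal G=\sigma(V')$ recovers the HGR inequality stated after the maximal-correlation definition.
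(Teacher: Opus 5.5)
Your proposal is correct and takes the same route as the paper: center $U$ and $V$ (you also normalize them), note that they remain admissible in the supremum defining $\rho(\mathcal F,\mathcal G)$, and multiply back by $\sqrt{\Var(U)\Var(V)}$. Your separate treatment of the zero-variance case is a small improvement, since the paper's one-line argument divides by $\sqrt{\Var(U)\Var(V)}$ without addressing that case.
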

\begin{proof}
	Without loss of generality assume $\E U=\E V=0$ (otherwise replace by centered versions).
	Then $|\Corr(U,V)|\le \rho(\mathcal F,\mathcal G)$ by definition, i.e.
	\[
	\frac{|\Cov(U,V)|}{\sqrt{\Var(U)\Var(V)}}\le \rho(\mathcal F,\mathcal G),
	\]
	which yields the claim.
\end{proof}

\begin{lemma}[Parity orthogonality]\label{lem:parity}
	$Z_i^{(\mathrm{size})}\in \mathcal{H}_{\mathrm{even}}$ and
	$Z_i^{(\mathrm{loc})}\in \mathcal{H}_{\mathrm{odd}}$. Consequently,
	\[
	\int_{\mathbb S^{d-1}}\E\big[Z_i^{(\mathrm{size})}(u)\,Z_j^{(\mathrm{loc})}(u)\big]\,d\sigma(u)=0,
	\]
	and $C_k^{(\mathrm{size})}$ acts on $\mathcal{H}_{\mathrm{even}}$, while
	$C_k^{(\mathrm{loc})}$ acts on $\mathcal{H}_{\mathrm{odd}}$ for all $k$.
\end{lemma}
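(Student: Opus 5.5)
The plan is to verify the parity claims pathwise, deduce the orthogonality identity by an odd-integrand argument as in Proposition~\ref{prop:orth-add}, and then obtain the operator statements by examining how the kernels $K_k^{(\star)}$ behave under $u\mapsto -u$ and $v\mapsto -v$.

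\textbf{Parity of $Z_i^{(\star)}$.} From the definitions, $W_{X_i}(-u)=W_{X_i}(u)$ and $C_{X_i}(-u)=-C_{X_i}(u)$ hold for every $\omega$. Taking expectations pointwise, which is legitimate by \eqref{eq:moment2}, $u\mapsto \E W_{X_0}(u)$ is even and $u\mapsto \E C_{X_0}(u)$ is odd. Hence $Z_i^{(\mathrm{size})}$ is even and $Z_i^{(\mathrm{loc})}$ is odd as functions of $u$, pathwise. Because the moment condition gives $\E\|Z_i^{(\star)}\|_{2,\sigma}^2<\infty$, these profiles are a.s.\ elements of $\mathcal H_{\mathrm{even}}$ and $\mathcal H_{\mathrm{odd}}$ respectively.

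\textbf{Orthogonality identity.} The function $g(u):=\E[Z_i^{(\mathrm{size})}(u)Z_j^{(\mathrm{loc})}(u)]$ is integrable on the sphere by Cauchy--Schwarz and Fubini. By the parities just established, $g(-u)=-g(u)$. Since $\sigma$ is invariant under the antipodal map $u\mapsto -u$, we get $\int g\,d\sigma=\int g(-u)\,d\sigma(u)=-\int g\,d\sigma$, so the integral vanishes. This is the same argument as in Proposition~\ref{prop:orth-add}, with the indices $i\neq j$ allowed.

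\textbf{Action of $C_k^{(\star)}$.} Consider the kernel $K_k^{(\mathrm{size})}(u,v)=\E[Z_0^{(\mathrm{size})}(u)Z_k^{(\mathrm{size})}(v)]$. It is even in each variable separately, and it lies in $L^2(\sigma\otimes\sigma)$ by Cauchy--Schwarz, so $C_k^{(\mathrm{size})}$ is Hilbert--Schmidt.
\begin{itemize}
\item Since the kernel is even in $u$, $(C_k^{(\mathrm{size})}f)(-u)=(C_k^{(\mathrm{size})}f)(u)$ for every $f\in\mathcal H$, so the range lies in $\mathcal H_{\mathrm{even}}$.
\item Since the kernel is even in $v$, for $f\in\mathcal H_{\mathrm{odd}}$ the substitution $v\mapsto -v$ gives $C_k^{(\mathrm{size})}f=-C_k^{(\mathrm{size})}f=0$.
\end{itemize}
Thus $C_k^{(\mathrm{size})}$ maps $\mathcal H_{\mathrm{even}}$ into itself and annihilates $\mathcal H_{\mathrm{odd}}$. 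The loc case is identical: the kernel is odd in each variable, so the range lies in $\mathcal H_{\mathrm{odd}}$ and even inputs are killed.

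\textbf{Main obstacle.} The only delicate point is measure-theoretic rather than conceptual. We must justify evaluating expectations pointwise in $u$ and exchanging $\E$ with $\int d\sigma$. This is needed both to know that $u\mapsto\E W_{X_0}(u)$ is a well-defined even function and that the kernels are measurable and square integrable. I would handle it once, using joint measurability of $(\omega,u)\mapsto h_X(\omega,u)$ together with \eqref{eq:moment2} and Fubini--Tonelli, and then apply it to both the scalar integral and the kernels.
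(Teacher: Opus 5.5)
Your proposal is correct and follows the paper's proof. The paper's argument has the same four steps: pathwise parity of $W_{X_i}$ and $C_{X_i}$, parity preserved under centering, the vanishing $\sigma$-integral of an odd integrand, and parity of the kernels $K_k^{(\star)}$ for the operator claim. Your version is more explicit: it supplies the integrability and Fubini justifications, and it adds the correct observation that $C_k^{(\mathrm{size})}$ annihilates $\mathcal{H}_{\mathrm{odd}}$ while $C_k^{(\mathrm{loc})}$ annihilates $\mathcal{H}_{\mathrm{even}}$, which the paper leaves implicit.
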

\begin{proof}
	$W_X(u)=\tfrac12(h_X(u)+h_X(-u))$ is even in $u$, while
	$C_X(u)=\tfrac12(h_X(u)-h_X(-u))$ is odd. Centering preserves parity, hence
	$Z^{(\mathrm{size})}$ is even and $Z^{(\mathrm{loc})}$ is odd.
	The product of an even and an odd function is odd, so its $\sigma$--integral over the sphere is $0$.
	This gives the stated orthogonality. The invariance of $\mathcal H_{\mathrm{even}}$ and
	$\mathcal H_{\mathrm{odd}}$ under $C_k^{(\mathrm{size})}$ and $C_k^{(\mathrm{loc})}$ follows from
	the parity of the kernels.
\end{proof}

\paragraph{Hilbert--Schmidt operators on $\mathcal H=L^2(\mathbb S^{d-1},\sigma)$.}
A bounded linear operator $T:\mathcal H\to \mathcal H$ is \emph{Hilbert--Schmidt (HS)} if
\[
\|T\|_{\mathrm{HS}}^2:=\sum_{j=1}^\infty \|T e_j\|_{\mathcal H}^2<\infty,
\]
for (equivalently, for any) orthonormal basis $\{e_j\}_{j\ge1}$ of $\mathcal H$.
If $T$ is an integral operator with kernel
$K\in L^2(\mathbb S^{d-1}\times \mathbb S^{d-1},\sigma\otimes\sigma)$,
\[
(Tf)(u)=\int_{\mathbb S^{d-1}} K(u,v)\,f(v)\,d\sigma(v),
\]
then $T$ is HS and
\[
\|T\|_{\mathrm{HS}}^2=
\iint_{\mathbb S^{d-1}\times \mathbb S^{d-1}} |K(u,v)|^2\,d\sigma(u)\,d\sigma(v).
\]
(See, e.g., \cite{conway2019course}.)
In particular, for $C_k^{(\star)}$ with kernel
$K_k^{(\star)}(u,v):=\Cov\big(Z_0^{(\star)}(u),Z_k^{(\star)}(v)\big)$,
we have $\|C_k^{(\star)}\|_{\mathrm{HS}}^2=\iint |K_k^{(\star)}(u,v)|^2\,d\sigma(u)\,d\sigma(v)$
whenever $K_k^{(\star)}\in L^2(\sigma\otimes\sigma)$.

\begin{lemma}[HS control and existence of $\Sigma^{(\star)}$]\label{lem:lrv}
	Under \textnormal{(M$2+$)} and \textnormal{(Mix)}, we have
	$\sum_{k\in\mathbb Z}\|C_k^{(\star)}\|_{\mathrm{HS}}<\infty$, hence $\Sigma^{(\star)}$ exists
	and is trace--class. In particular, the stronger condition
	$\sum_k\rho_{\max}(k)<\infty$ also suffices.
\end{lemma}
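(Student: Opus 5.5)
The plan is to bound each lag-$k$ kernel $K_k^{(\star)}$ in $L^2(\sigma\otimes\sigma)$ by a multiple of $\rho_{\max}(k)^{\delta/(2+\delta)}$, sum over $k$, and then obtain the trace-class property from positivity of the limit.

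\textbf{Step 1: reduction to $k\ge0$.} By weak stationarity, $K_{-k}^{(\star)}(u,v)=K_k^{(\star)}(v,u)$. Hence $C_{-k}^{(\star)}=(C_k^{(\star)})^{*}$ and $\|C_{-k}^{(\star)}\|_{\mathrm{HS}}=\|C_k^{(\star)}\|_{\mathrm{HS}}$, so it suffices to treat $k\ge0$. For $k=0$, Cauchy--Schwarz gives
\[
|K_0^{(\star)}(u,v)|\le \sigma_\star(u)\sigma_\star(v),\qquad \sigma_\star(u)^2:=\Var(Z_0^{(\star)}(u)).
\]
Therefore $\|C_0^{(\star)}\|_{\mathrm{HS}}\le \int\sigma_\star^2\,d\sigma=\Var_\star(X_0)$, which is finite by (M$2+$), since $|W_X|,|C_X|\le \tfrac12(|h_X(u)|+|h_X(-u)|)$ and $\sigma$ is symmetric.

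\textbf{Step 2: the kernel bound for $k\ge1$.} If a direction-wise bound
\[
|K_k^{(\star)}(u,v)|\le c\,\rho_{\max}(k)^{\delta/(2+\delta)}\,\|Z_0^{(\star)}(u)\|_{2+\delta}\,\|Z_0^{(\star)}(v)\|_{2+\delta}
\]
is available, then squaring, integrating, and applying Jensen ($\int\|Z_0(u)\|_{2+\delta}^2\,d\sigma\le(\int\E|Z_0(u)|^{2+\delta}d\sigma)^{2/(2+\delta)}$) gives $\|C_k^{(\star)}\|_{\mathrm{HS}}\le c'\rho_{\max}(k)^{\delta/(2+\delta)}$. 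Summing over $k$ and using (Mix) yields $\sum_k\|C_k^{(\star)}\|_{\mathrm{HS}}<\infty$. This also covers the case $\sum_k\rho_{\max}(k)<\infty$, because $\rho_{\max}\le1$ makes $\rho^{\delta/(2+\delta)}\ge\rho$.

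\textbf{Main obstacle: off-diagonal directions.} The coefficient $\rho_{\max}(k)$ only controls pairs at the \emph{same} direction $u$, while $K_k^{(\star)}(u,v)$ involves $u\ne v$. I would try the following, in order.

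1. For $\star=\mathrm{size}$ with $u=\pm v$, and similarly for $\mathrm{loc}$ up to sign, the same-direction bound applies directly, since $|\Cov|\le\rho\,\sigma(u)\sigma(v)$. In this case the $2+\delta$ moments are not even needed.

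2. For general $u\ne v$, rewrite $\Cov(Z_0(u),Z_k(v))$ by polarization through $\mathrm{tot}$ profiles. Note that $\rho_{\max}$ includes $\star=\mathrm{tot}$, and $h(u)$, $h(-u)$ generate both $W(u)$ and $C(u)$. This step does not close cross-direction terms, however.

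3. The honest fallback is to strengthen the reading of $\rho_{\max}$ to the $\sigma$-field coefficient $\rho(\sigma(\Phi_{X_0}),\sigma(\Phi_{X_k}))$. This is dominated by $\rho_{\mathrm{classic}}(k)$, as in Remark~\ref{rem:rho-comparison}, and then Lemma~\ref{lem:maxcorr} gives the bound for every $(u,v)$ with exponent $1\ge\delta/(2+\delta)$. I expect the published argument to implicitly take this route, and I would state it explicitly.

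\textbf{Step 3: convergence and trace class.} HS norms dominate operator norms, so the series $\Sigma^{(\star)}=\sum_k C_k^{(\star)}$ converges absolutely in HS norm. It is self-adjoint, because the $\pm k$ terms pair as adjoints. It is positive, since
\[
\langle\Sigma^{(\star)}f,f\rangle=\lim_n \tfrac1n\E\langle T_n^{(\star)},f\rangle^2\ge0,
\]
using the absolute summability and the Cesàro weights $(1-|k|/n)$. For trace class, positivity gives $\operatorname{tr}\Sigma=\sum_j\langle\Sigma e_j,e_j\rangle=\lim_n \tfrac1n\E\|T_n^{(\star)}\|_{\mathcal H}^2$. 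This limit is at most $K_\rho$ by the bound \eqref{eq:block-second-moment} and Fatou's lemma, so it is finite.
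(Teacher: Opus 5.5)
Your route is the paper's: bound $K_k^{(\star)}(u,v)$ pointwise by a mixing coefficient times marginal norms, square-integrate to get $\|C_k^{(\star)}\|_{\mathrm{HS}}\lesssim\rho(k)$ or $\rho(k)^{\delta/(2+\delta)}$, and sum over $k$. The paper gets the off-diagonal bound in one line by applying Lemma~\ref{lem:maxcorr} with $\rho_{\max}(k)$ in place of $\rho(\mathcal F,\mathcal G)$. In effect it silently makes your option~3 substitution.

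Your ``main obstacle'' is therefore a defect of the statement, not of your approach. With $\rho_{\max}$ read literally (the same $u$ on both sides), the claim $\sum_k\|C_k^{(\star)}\|_{\mathrm{HS}}<\infty$ is false. Here is a counterexample.
\begin{itemize}
\item Take $d=2$, let $J$ be the rotation by $\pi/2$, and let $(\xi_k)$ be a stationary centered Gaussian sequence in $\mathbb R^2$ with $\Cov(\xi_0,\xi_0)=I$ and $\Cov(\xi_0,\xi_k)=\frac{1}{\pi k}J$ for $k\ge1$. It exists because the spectral density $I-\frac{2i}{\pi}J\sum_{k\ge1}\frac{\sin k\lambda}{k}$ is Hermitian with eigenvalues in $[0,2]$.
\item Set $X_k=\{\xi_k\}$. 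Then $\Cov(\langle u,\xi_0\rangle,\langle u,\xi_k\rangle)=\frac{1}{\pi k}u^\top Ju=0$, so every same-direction Gaussian pair is independent and $\rho_{\max}(k)=0$ for all $k\ge1$. Adding an independent i.i.d.\ random disc makes the size part non-degenerate without changing this.
\item However, $K_k^{(\mathrm{loc})}(u,v)=\frac{1}{\pi k}u^\top Jv$ gives $\|C_k^{(\mathrm{loc})}\|_{\mathrm{HS}}=\frac{1}{\sqrt2\,\pi k}$, which is not summable.
\end{itemize}
Here $C_{-k}=-C_k$, so the symmetric sums $\sum_{|k|\le n}C_k$ still converge; it is the absolute summability that fails. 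Options~1--2 therefore cannot work, and a cross-direction hypothesis such as your $\sigma$-field coefficient is genuinely required. The paper effectively concedes this later, using $\overline\rho$ in Proposition~\ref{prop:Kstar-trace} and an extra cross-direction condition in Remark~\ref{rem:compat-rhomax-op}.

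Your Step~3 supplies something the paper only asserts. Summable HS norms give a Hilbert--Schmidt limit, not a trace-class one, and the paper's proof simply ends with ``therefore \dots\ is trace-class''. Your positivity-plus-Fatou argument closes this. It is economical because \eqref{eq:block-second-moment} involves only same-direction covariances. Strictly, Fatou gives $\tr\Sigma^{(\star)}\le\liminf_n\frac1n\E\|T_n^{(\star)}\|_{\mathcal H}^2$ rather than equality, but the inequality is all you need.

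One slip: ``this also covers $\sum_k\rho_{\max}(k)<\infty$ because $\rho^{\delta/(2+\delta)}\ge\rho$'' runs the implication backwards. That inequality shows that (Mix) implies $\sum_k\rho(k)<\infty$. So summability of $\rho$ is the \emph{weaker} hypothesis, and a bound in terms of $\rho^{\delta/(2+\delta)}$ does not reach it. What does reach it is the exponent-one bound
\[
|K_k^{(\star)}(u,v)|\le\overline\rho(k)\sqrt{\Var(Z_0^{(\star)}(u))\Var(Z_0^{(\star)}(v))}
\]
from Lemma~\ref{lem:maxcorr}, which your fallback provides. It needs only second moments and makes the Davydov-type step (an $\alpha$-mixing device) superfluous under $\rho$-mixing. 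The paper's phrase ``the stronger condition'' contains the same inversion.
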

\begin{proof}
	By Lemma~\ref{lem:maxcorr}, for all $u,v$ and $k$,
	\[
	|K_k^{(\star)}(u,v)|
	\le
	\rho_{\max}(k)\sqrt{\Var(Z_0^{(\star)}(u))\,\Var(Z_0^{(\star)}(v))}.
	\]
	Therefore,
	\[
	\|C_k^{(\star)}\|_{\mathrm{HS}}^2
	=
	\iint K_k^{(\star)}(u,v)^2\,d\sigma(u)\,d\sigma(v)
	\le
	\rho_{\max}(k)^2
	\Big(\int \Var(Z_0^{(\star)}(u))\,d\sigma(u)\Big)^2.
	\]
	Hence $\sum_k\|C_k^{(\star)}\|_{\mathrm{HS}}<\infty$ if $\sum_k\rho_{\max}(k)<\infty$, since
	\[
	\int \Var(Z_0^{(\star)}(u))\,d\sigma(u)
	\le
	\int \E|Z_0^{(\star)}(u)|^2\,d\sigma(u)<\infty
	\]
	by \textnormal{(M$2+$)}.
	
	With only \textnormal{(Mix)}, apply a Davydov/Rio-type bound for $\rho$--mixing (see, e.g., \cite{rio2017asymptotic}):
	for some constant $C_\delta$, for all $u,v$,
	\[
	|K_k^{(\star)}(u,v)|
	\le
	C_\delta\,\rho_{\max}(k)^{\delta/(2+\delta)}
	\big(\E|Z_0^{(\star)}(u)|^{2+\delta}\big)^{1/(2+\delta)}
	\big(\E|Z_0^{(\star)}(v)|^{2+\delta}\big)^{1/(2+\delta)}.
	\]
	Square and integrate in $(u,v)$; by H\"older and \textnormal{(M$2+$)},
	\[
	\|C_k^{(\star)}\|_{\mathrm{HS}}
	\le
	C_\delta\,\rho_{\max}(k)^{\delta/(2+\delta)}
	\Big(\int \E|Z_0^{(\star)}(u)|^{2+\delta}\,d\sigma(u)\Big)^{2/(2+\delta)}.
	\]
	Summing in $k$ and using \textnormal{(Mix)} yields $\sum_{k\in\mathbb Z}\|C_k^{(\star)}\|_{\mathrm{HS}}<\infty$.
	Therefore $\Sigma^{(\star)}=\sum_{k\in\mathbb Z}C_k^{(\star)}$ exists and is trace--class.
\end{proof}

\begin{lemma}[Second--moment bound for increments]\label{lem:increments}
	There exists $K_\rho<\infty$ such that, for all $m,\ell\ge1$ and
	$\star\in\{\mathrm{size},\mathrm{loc}\}$,
	\[
	\E\Big\|\sum_{i=m+1}^{m+\ell}Z_i^{(\star)}\Big\|_{\mathcal H}^2\ \le\ K_\rho\,\ell.
	\]
\end{lemma}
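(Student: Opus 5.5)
The plan is to expand the squared Hilbert norm of the block sum, reduce it to lag covariances by weak stationarity, and control each lag by $\rho_{\max}$, in the same way as \eqref{eq:var-int-bound} but now for an arbitrary starting index $m$. Fix $\star\in\{\mathrm{size},\mathrm{loc}\}$, $m\ge1$ and $\ell\ge1$, and put $B:=\sum_{i=m+1}^{m+\ell}Z_i^{(\star)}$. Since each $Z_i^{(\star)}(u)$ is centered pointwise in $u$ and square integrable on $\Omega\times\mathbb S^{d-1}$ by the standing moment assumption, Fubini--Tonelli gives
\[
\E\|B\|_{\mathcal H}^2=\int_{\mathbb S^{d-1}}\Var\Big(\sum_{i=m+1}^{m+\ell}Z_i^{(\star)}(u)\Big)\,d\sigma(u)
=\sum_{i,j=m+1}^{m+\ell}\int_{\mathbb S^{d-1}}\Cov\big(Z_i^{(\star)}(u),Z_j^{(\star)}(u)\big)\,d\sigma(u).
\]

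Next I use weak stationarity. The covariance of $h_{X_i}(u),h_{X_j}(v)$ depends only on $j-i$. Since $W$ and $C$ are linear combinations of $h(u)$ and $h(-u)$, the same holds for the size and location parts. So the $(i,j)$ integrand equals $\Cov(Z_0^{(\star)}(u),Z_{|j-i|}^{(\star)}(u))$; for negative lags, symmetry of covariance together with stationarity lets me flip the order. Grouping by lag $k=|j-i|$, exactly as in Step 2 of the proof of Theorem~\ref{thm:ChebyshevIntegrated}, gives
\[
\E\|B\|_{\mathcal H}^2\le \ell\,\Var_\star(X_0)+2\sum_{k=1}^{\ell-1}(\ell-k)\int_{\mathbb S^{d-1}}\big|\Cov(Z_0^{(\star)}(u),Z_k^{(\star)}(u))\big|\,d\sigma(u).
\]
The key observation is that this expression does not depend on $m$.

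Then I bound each lag term by Lemma~\ref{lem:cov-rho}: pointwise $|\Cov|\le\rho_{\max}(k)\Var(Z_0^{(\star)}(u))$, where I use stationarity to equate $\Var(Z_k^{(\star)}(u))$ with $\Var(Z_0^{(\star)}(u))$. Integrating in $u$ bounds the lag-$k$ integral by $\rho_{\max}(k)\Var_\star(X_0)$. Using $\ell-k\le\ell$, I get $\E\|B\|^2\le \ell\,\Var_\star(X_0)\big(1+2\sum_{k\ge1}\rho_{\max}(k)\big)$. The series is finite by the standing summability assumption $\sum_k\rho_{\max}(k)<\infty$ (which also holds under (M$2+$) and (Mix) whenever $\rho_{\max}\le1$ and $\delta/(2+\delta)<1$). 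Finally I set $K_\rho:=\max_{\star\in\{\mathrm{size},\mathrm{loc}\}}\Var_\star(X_0)\big(1+2\sum_{k\ge1}\rho_{\max}(k)\big)$, which is finite because $\Var_\star(X_0)\le\int\E h_{X_0}(u)^2\,d\sigma(u)<\infty$; here $|W|,|C|$ are bounded by averages of $|h(\pm u)|$, and $\sigma$ is invariant under $u\mapsto-u$.

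The argument is routine, and the only delicate point is the stationarity step. The definition of weak stationarity is stated for $h$, not for its components, so I must check that it transfers to $W$ and $C$ via the $u\mapsto -u$ reflection. That check is what makes both the uniformity in $m$ and the equality $\Var(Z_k^{(\star)}(u))=\Var(Z_0^{(\star)}(u))$ legitimate.
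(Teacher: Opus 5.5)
Your proof is correct, and it takes a different route from the paper's. The paper also reduces the block second moment to $\sum_{|h|<\ell}(\ell-|h|)a_h$ with $a_h=\int\Cov(Z_0^{(\star)}(u),Z_h^{(\star)}(u))\,d\sigma(u)$. It then asserts $|a_h|\le\|C_h^{(\star)}\|_{\mathrm{HS}}$ and sets $K_\rho:=\sum_{h\in\mathbb Z}\|C_h^{(\star)}\|_{\mathrm{HS}}$, which is finite by Lemma~\ref{lem:lrv}; the appeal of that route is that it ties the constant to the same lag operators whose sum is $\Sigma^{(\star)}$. You instead bound each $a_k$ by the same-direction inequality of Lemma~\ref{lem:cov-rho}, i.e.\ you rerun the argument behind \eqref{eq:block-second-moment} from Subsection~\ref{subsec:LLN}. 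You also correctly observe that (Mix) implies $\sum_k\rho_{\max}(k)<\infty$, because $\rho_{\max}(k)\in[0,1]$ and $\delta/(2+\delta)<1$; so (Mix) is the stronger hypothesis, contrary to the wording at the end of Lemma~\ref{lem:lrv}. What your route buys is that it uses only what $\rho_{\max}$ actually controls. The paper's route has two weak points. First, $a_h$ is a diagonal, trace-type integral of the kernel $K_h^{(\star)}$ and is not dominated by the Hilbert--Schmidt norm in general. Already for $h=0$ one has $a_0=\tr C_0^{(\star)}\ge\|C_0^{(\star)}\|_{\mathrm{HS}}$, with strict inequality as soon as $C_0^{(\star)}$ has rank at least two; an example is $X=aB+b[-e_1,e_1]$ in $d\ge2$, with $B$ the unit ball and $a,b\ge0$ independent with positive variances. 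Second, the off-diagonal bound on $K_k^{(\star)}(u,v)$ used in Lemma~\ref{lem:lrv} is not supplied by $\rho_{\max}$, which only compares the same direction $u$ at lags $0$ and $k$. Your argument needs nothing beyond these diagonal correlations and yields the explicit constant $K_\rho=\max_{\star}\Var_\star(X_0)\bigl(1+2\sum_{k\ge1}\rho_{\max}(k)\bigr)$.
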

\begin{proof}
	Write
	\[
	\E\Big\|\sum_{i=m+1}^{m+\ell}Z_i^{(\star)}\Big\|_{\mathcal H}^2
	=
	\sum_{r,s=1}^\ell\int
	\Cov\big(Z_{m+r}^{(\star)}(u),Z_{m+s}^{(\star)}(u)\big)\,d\sigma(u)
	=
	\sum_{h=-(\ell-1)}^{\ell-1}(\ell-|h|)\,a_h,
	\]
	where $a_h:=\int \Cov\big(Z_0^{(\star)}(u),Z_h^{(\star)}(u)\big)\,d\sigma(u)$.
	By Cauchy--Schwarz in $\mathcal H$, $|a_h|\le \|C_h^{(\star)}\|_{\mathrm{HS}}$.
	Hence
	\[
	\E\Big\|\sum_{i=m+1}^{m+\ell}Z_i^{(\star)}\Big\|_{\mathcal H}^2
	\le
	\ell\sum_{h\in\mathbb Z}\|C_h^{(\star)}\|_{\mathrm{HS}}
	=:K_\rho\,\ell,
	\]
	which is finite by Lemma~\ref{lem:lrv}.
\end{proof}

\subsubsection{Hilbert--space CLT}\label{subsubsec:HCLT}

\begin{theorem}[CLT in ${\mathcal H}$]\label{thm:HCLT}
	Under \textnormal{(M$2+$)}, \textnormal{(Mix)}, and \textnormal{(LRV)}, we have
	\[
	\sqrt{n}\,S_n^{(\mathrm{size})}\Rightarrow \mathcal N_{\mathcal H}\!\big(0,\Sigma^{(\mathrm{size})}\big),\qquad
	\sqrt{n}\,S_n^{(\mathrm{loc})}\Rightarrow \mathcal N_{\mathcal H}\!\big(0,\Sigma^{(\mathrm{loc})}\big).
	\]
	Consequently,
	$\sqrt{n}\,S_n^{(\mathrm{tot})}\Rightarrow G^{(\mathrm{size})}+G^{(\mathrm{loc})}$ with
	block--diagonal covariance $\Sigma^{(\mathrm{size})}\oplus\Sigma^{(\mathrm{loc})}$ on
	${\mathcal H}_{\mathrm{even}}\oplus {\mathcal H}_{\mathrm{odd}}$.
\end{theorem}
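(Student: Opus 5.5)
The standard route to a CLT in a separable Hilbert space has two parts: convergence of the finite-dimensional distributions and tightness. We carry this out separately for $\star\in\{\mathrm{size},\mathrm{loc}\}$ on $\mathcal H=L^2(\mathbb S^{d-1},\sigma)$ and then combine the two components using the parity structure of Lemma~\ref{lem:parity}.

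\emph{Step 1 (one-dimensional projections).} Fix $f\in\mathcal H$ and set $\xi_i:=\langle Z_i^{(\star)},f\rangle$. This is a weakly stationary, centered real sequence. By Cauchy--Schwarz on the sphere and (M$2+$), $\E|\xi_0|^{2+\delta}\le\|f\|_{\mathcal H}^{2+\delta}\int\E|Z_0^{(\star)}(u)|^{2+\delta}\,d\sigma(u)<\infty$. Its autocovariances are $\Cov(\xi_0,\xi_k)=\langle C_k^{(\star)}f,f\rangle$, so by Lemma~\ref{lem:lrv} they are absolutely summable, with $\sum_k\Cov(\xi_0,\xi_k)=\langle\Sigma^{(\star)}f,f\rangle$.

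To get a scalar CLT, we would appeal to a classical CLT for $\rho$-mixing sequences with $2+\delta$ moments (Ibragimov/Peligrad type, or Rio's version). This step is the main obstacle. Those theorems require mixing of the $\sigma$-fields generated by $(\xi_i)$, i.e. past versus future blocks. By contrast, $\rho_{\max}$ only controls pairwise, direction-wise maximal correlations of $\widetilde\Phi^{(\star)}_{X_0}(u)$ and $\widetilde\Phi^{(\star)}_{X_k}(u)$, and it does not even control correlations at distinct directions $u\neq v$.

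My first attempt would be to make explicit the interpretation under which the paper's (Mix) condition is meant, namely as a bound on the block $\rho$-mixing coefficients of the sequence $\sigma(X_i)$. By Remark~\ref{rem:rho-comparison}, this is what $\rho_{\mathrm{classic}}$ provides. Under that interpretation, $\xi_i$ is $\sigma(X_i)$-measurable and inherits the mixing rate, so the Ibragimov CLT applies and gives $n^{-1/2}\sum_{i\le n}\xi_i\Rightarrow N(0,\langle\Sigma^{(\star)}f,f\rangle)$. The Cramér--Wold device, applied to finite linear combinations of the $f$'s, then yields convergence of all finite-dimensional projections to those of $\mathcal N_{\mathcal H}(0,\Sigma^{(\star)})$.

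\emph{Step 2 (tightness).} Fix an orthonormal basis $\{e_j\}$ of $\mathcal H_{\mathrm{even}}$ (respectively $\mathcal H_{\mathrm{odd}}$), and let $P_N$ denote the projection onto the span of $e_1,\dots,e_N$. It suffices to show flat concentration:
\[
\lim_{N\to\infty}\sup_n \E\|(I-P_N)\sqrt n S_n^{(\star)}\|_{\mathcal H}^2=0 .
\]
Expanding as in Lemma~\ref{lem:increments}, we have
\[
\E\|(I-P_N)\sqrt nS_n^{(\star)}\|^2=\sum_{|h|<n}(1-|h|/n)\,\mathrm{tr}\big((I-P_N)C_h^{(\star)}(I-P_N)\big),
\]
which is bounded by $\sum_{h}|\mathrm{tr}((I-P_N)C_h^{(\star)})|$. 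Each term tends to $0$ as $N\to\infty$ because $C_h^{(\star)}$ is trace class, and the sum is dominated via the summability in Lemma~\ref{lem:lrv}. Dominated convergence over $h$ then gives the claim. Together with Step 1, Prokhorov's theorem gives $\sqrt nS_n^{(\star)}\Rightarrow\mathcal N_{\mathcal H}(0,\Sigma^{(\star)})$.

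\emph{Step 3 (total).} Apply Steps 1--2 to the pair $(Z_i^{(\mathrm{size})},Z_i^{(\mathrm{loc})})$ as a single $\mathcal H_{\mathrm{even}}\oplus\mathcal H_{\mathrm{odd}}$-valued sequence; this is just $Z_i^{(\mathrm{tot})}$, since $\mathcal H=\mathcal H_{\mathrm{even}}\oplus\mathcal H_{\mathrm{odd}}$. The limit is centered Gaussian, and its covariance operator is $\Sigma^{(\mathrm{tot})}=\sum_k C_k^{(\mathrm{tot})}$.

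By Lemma~\ref{lem:parity}, the cross kernels between the even and odd components integrate to zero against test functions of opposite parity. This makes $\Sigma^{(\mathrm{tot})}$ block-diagonal, equal to $\Sigma^{(\mathrm{size})}\oplus\Sigma^{(\mathrm{loc})}$. Jointly Gaussian components with zero cross-covariance are independent, so the limit can be written as $G^{(\mathrm{size})}+G^{(\mathrm{loc})}$ with independent summands.
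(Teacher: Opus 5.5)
Your architecture is the paper's: one-dimensional projections plus Cram\'er--Wold, flat concentration of the tail $(I-P_N)$, and parity for the total. However, Step~3 asserts something false, and it is exactly the clause of the statement that the paper also obtains only by citing Lemma~\ref{lem:parity}.

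\textbf{Step 3 (block-diagonality).} Parity gives $\E\langle Z_i^{(\mathrm{size})},Z_j^{(\mathrm{loc})}\rangle_{\mathcal H}=\int\E[Z_i^{(\mathrm{size})}(u)Z_j^{(\mathrm{loc})}(u)]\,d\sigma(u)=0$. That says the size--location cross-covariance operator has zero \emph{trace}. The pairings that vanish by parity (against an odd $f$ or an even $g$) are not the off-diagonal block. For $f\in\mathcal H_{\mathrm{even}}$, $g\in\mathcal H_{\mathrm{odd}}$ the off-diagonal block is $\langle\Sigma^{(\mathrm{tot})}g,f\rangle=\sum_{k\in\mathbb Z}\Cov(\langle Z_0^{(\mathrm{size})},f\rangle,\langle Z_k^{(\mathrm{loc})},g\rangle)$. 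This is the long-run cross-covariance of size and location, and nothing makes it vanish.

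A counterexample: take $d=1$ and i.i.d.\ $X_i=[0,2\xi_i]$ with $\xi_i\sim\mathrm{Unif}[0,1]$. Then $h_{X_i}(1)=2\xi_i$ and $h_{X_i}(-1)=0$, so $Z_i^{(\mathrm{size})}=\xi_i-\tfrac12$ (constant in $u$) and $Z_i^{(\mathrm{loc})}(u)=(\xi_i-\tfrac12)u$. All hypotheses hold. Yet $\sqrt n\,S_n^{(\mathrm{tot})}\Rightarrow N(1+u)$ with a single $N\sim\mathcal N(0,1/12)$, so $G^{(\mathrm{size})}=N$ and $G^{(\mathrm{loc})}=Nu$ are perfectly dependent. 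In the orthonormal basis $\{1,u\}$ the limiting covariance is $\frac1{12}\bigl(\begin{smallmatrix}1&1\\1&1\end{smallmatrix}\bigr)$, not $\Sigma^{(\mathrm{size})}\oplus\Sigma^{(\mathrm{loc})}=\frac1{12}I_2$.

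Treating $Z_i^{(\mathrm{tot})}$ jointly is the right move, and it is better than the paper's ``consequently'', since marginal limits do not determine the joint law. But it yields a Gaussian limit with covariance $\Sigma^{(\mathrm{tot})}$ whose off-diagonal blocks are the cross long-run covariances. Only $\|G^{(\mathrm{tot})}\|^2=\|G^{(\mathrm{size})}\|^2+\|G^{(\mathrm{loc})}\|^2$ and $\tr\Sigma^{(\mathrm{tot})}=\tr\Sigma^{(\mathrm{size})}+\tr\Sigma^{(\mathrm{loc})}$ survive. Block-diagonality, and hence independence of the summands, needs an extra hypothesis such as vanishing size--location cross-covariances at all lags.

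\textbf{Step 1 (the mixing hypothesis).} Your diagnosis here is correct, and it exposes a real gap in the paper's proof, which merely asserts that the mixing of $\langle Z_i^{(\star)},h\rangle$ is ``controlled by $\rho_{\max}(k)$ by linearity.'' Under the literal (Mix) the first two limits can fail. Take $d=1$, $X_i=[\xi_i-1,\xi_i+1]$, $\xi_i=\sqrt2\cos(2\pi(U+iV))$ with $U,V$ i.i.d.\ uniform. The $\xi_i$ are stationary, bounded and pairwise independent, so every coefficient in $\rho_{\max}$ vanishes (the size part is deterministic). Yet $|\sum_{i\le n}\xi_i|\le\sqrt2/|\sin\pi V|$, so $\sqrt n\,S_n^{(\mathrm{loc})}\to0$ while $\Sigma^{(\mathrm{loc})}\neq0$.

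The repair is not what Remark~\ref{rem:rho-comparison} provides: $\rho_{\mathrm{classic}}(k)=\rho(\sigma(X_0),\sigma(X_k))$ is also pairwise and vanishes in this example. You need past--future mixing $\overline\rho(k):=\rho\bigl(\sigma(X_j:j\le0),\sigma(X_j:j\ge k)\bigr)$, with (Mix) imposed on $\overline\rho$. That is a strengthened hypothesis, not an interpretation of the stated one.

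\textbf{Step 2 (tightness).} As written, Step~2 contradicts your own diagnosis. Lemma~\ref{lem:lrv} rests on a kernel bound at $u\neq v$ that $\rho_{\max}$ does not give. Even granting it, the lemma only yields $\sum_h\|C_h^{(\star)}\|_{\mathrm{HS}}<\infty$, which does not dominate $|\tr((I-P_N)C_h^{(\star)})|$, because traces are not controlled by Hilbert--Schmidt norms. Under $\overline\rho$, the termwise $\rho$-bound and stationarity give
\[
\bigl|\tr\bigl((I-P_N)C_h^{(\star)}(I-P_N)\bigr)\bigr|
=\Bigl|\sum_{j>N}\Cov\bigl(\langle Z_0^{(\star)},e_j\rangle,\langle Z_h^{(\star)},e_j\rangle\bigr)\Bigr|
\le\overline\rho(|h|)\,\E\|(I-P_N)Z_0^{(\star)}\|_{\mathcal H}^2 .
\]
Hence $\sup_n\E\|(I-P_N)\sqrt n\,S_n^{(\star)}\|_{\mathcal H}^2\le\bigl(1+2\sum_{k\ge1}\overline\rho(k)\bigr)\E\|(I-P_N)Z_0^{(\star)}\|_{\mathcal H}^2\to0$. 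Here $\sum_k\overline\rho(k)<\infty$ holds because $\overline\rho\le\overline\rho^{\,\delta/(2+\delta)}$. Neither Lemma~\ref{lem:lrv} nor (LRV) is needed.

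With these changes your Steps~1--2 prove the two displayed CLTs and a joint CLT for the total process, but not the block-diagonal clause.
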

\begin{proof}
	\emph{Step 1 (finite--dimensional convergence).}
	Fix $h_1,\ldots,h_M\in {\mathcal H}$ and consider
	\[
	Y_n:=\frac{1}{\sqrt n}\sum_{i=1}^n\big(\langle Z_i^{(\star)},h_1\rangle,\ldots,\langle Z_i^{(\star)},h_M\rangle\big)\in\mathbb R^M.
	\]
	Each coordinate is a centered, weakly stationary real process, and its mixing is controlled by $\rho_{\max}(k)$
	by linearity of $\langle\cdot,h\rangle$. Using the big--block/small--block method with block length $b_n\to\infty$,
	gap $s_n\to\infty$, and $s_n/b_n\to0$, assumption \textnormal{(Mix)} renders separated big blocks asymptotically independent.
	The contribution of gaps is negligible in $L^2$, and the Lindeberg condition follows from \textnormal{(M$2+$)}.
	Hence $Y_n\Rightarrow \mathcal N_M(0,\Gamma)$ with
	\[
	\Gamma_{j\ell}
	=
	\sum_{k\in\mathbb Z}\Cov\big(\langle Z_0^{(\star)},h_j\rangle,\langle Z_k^{(\star)},h_\ell\rangle\big)
	=
	\langle \Sigma^{(\star)}h_j,h_\ell\rangle.
	\]
	
	\emph{Step 2 (tightness in ${\mathcal H}$).}
	Let $\{e_r\}_{r\ge1}$ be an orthonormal basis of $\mathcal H$ and $P_m$ the projection onto
	$\mathrm{span}\{e_1,\dots,e_m\}$. Decompose
	\[
	\frac1{\sqrt n}\sum_{i=1}^n Z_i^{(\star)}
	=
	\frac1{\sqrt n}\sum_{i=1}^n P_m Z_i^{(\star)}
	+
	\frac1{\sqrt n}\sum_{i=1}^n (I-P_m)Z_i^{(\star)}.
	\]
	The first term is finite--dimensional and tight by Step~1. For the tail,
	\[
	\E\Big\|\frac1{\sqrt n}\sum_{i=1}^n (I-P_m)Z_i^{(\star)}\Big\|_{\mathcal H}^2
	=
	\sum_{k=-(n-1)}^{n-1}\Big(1-\frac{|k|}{n}\Big)_+\,
	\tr\big((I-P_m)C_k^{(\star)}(I-P_m)\big).
	\]
	As $n\to\infty$ the right-hand side converges to
	$\tr\big((I-P_m)\Sigma^{(\star)}(I-P_m)\big)$, which goes to $0$ as $m\to\infty$
	since $\Sigma^{(\star)}$ is trace--class (\textnormal{LRV}). Thus the tail is uniformly small,
	yielding tightness. Combining Step~1 and Step~2 gives the ${\mathcal H}$--CLT.
	
	The statement for the total process follows from Lemma~\ref{lem:parity}.
\end{proof}

\subsubsection{Operator-form long-run covariance and FCLT}\label{subsec:opFCLT}

Fix $\star\in\{\mathrm{size},\mathrm{loc}\}$.
Assume weak stationarity for $(Z_i^{(\star)})$ and $\E\|Z_0^{(\star)}\|_{\H}^2<\infty$,
where $\H:=L^2(\mathbb S^{d-1},\sigma)$.
For $k\ge1$, write $\mathcal F_i:=\sigma(Z_i^{(\star)})$ and define the classical HGR maximal correlation
\[
\overline\rho(k)
\ :=\ \sup\left\{ \Corr(F,G):\ F\in L^2_0(\mathcal F_0),\ G\in L^2_0(\mathcal F_k)\right\}\in[0,1].
\]
We assume throughout this subsection
\begin{equation}\label{ass:rho-bar}
	\sum_{k=1}^{\infty}\overline\rho(k)\ <\ \infty.
	\tag{Mix-op}
\end{equation}

\begin{proposition}[Operator representation of the long-run covariance]\label{prop:LRV-repr}
	Let $\H:=L^2(\mathbb S^{d-1},\sigma)$ and, for $\star\in\{\mathrm{size},\mathrm{loc}\}$, set
	$Z_i^{(\star)}:=\widetilde\Phi^{(\star)}_{X_i}\in\H$ with $\E Z_i^{(\star)}=0$.
	For $f,g\in\H$, consider the bilinear form
	\[
	\mathfrak K_\star(f,g)
	\ :=\ \sum_{k\in\mathbb Z}\Cov\!\big(\langle Z_0^{(\star)},f\rangle_{\H},\ \langle Z_k^{(\star)},g\rangle_{\H}\big),
	\]
	whenever the series is absolutely convergent. Then:
	\begin{enumerate}
		\item There exists a unique bounded, self-adjoint, positive semidefinite operator
		$\mathcal K_\star:\H\to\H$ such that
		\[
		\langle \mathcal K_\star f,\ g\rangle_{\H}\ =\ \mathfrak K_\star(f,g),
		\qquad f,g\in\H.
		\]
		\item If in \eqref{eq:lrv-kernel} the long-run covariance is defined via kernels by
		$K_k^{(\star)}(u,v):=\Cov\big(Z_0^{(\star)}(u),Z_k^{(\star)}(v)\big)$,
		$(C_k^{(\star)}f)(u)=\int K_k^{(\star)}(u,v)f(v)\,d\sigma(v)$ and
		$\Sigma^{(\star)}:=\sum_{k\in\mathbb Z}C_k^{(\star)}$, then
		\[
		\langle \Sigma^{(\star)} f,\ g\rangle_{\H}\ =\ \mathfrak K_\star(f,g)
		\quad\text{for all }f,g\in\H,
		\]
		hence $\mathcal K_\star=\Sigma^{(\star)}$.
	\end{enumerate}
\end{proposition}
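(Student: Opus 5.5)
The plan is to first show that the series defining $\mathfrak K_\star(f,g)$ converges absolutely and is dominated by a multiple of $\|f\|_{\H}\|g\|_{\H}$, and then invoke the Riesz representation theorem for bounded bilinear forms. For fixed $f,g\in\H$, the scalars $\langle Z_0^{(\star)},f\rangle_{\H}$ and $\langle Z_k^{(\star)},g\rangle_{\H}$ are $\mathcal F_0$- and $\mathcal F_k$-measurable respectively. Lemma~\ref{lem:maxcorr} then gives
\[
\big|\Cov(\langle Z_0^{(\star)},f\rangle,\langle Z_k^{(\star)},g\rangle)\big|\le \overline\rho(|k|)\sqrt{\Var\langle Z_0^{(\star)},f\rangle\,\Var\langle Z_k^{(\star)},g\rangle}.
\]
Negative lags are handled by the same bound, using weak stationarity to shift indices. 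Cauchy--Schwarz in $\H$ gives $\Var\langle Z_i^{(\star)},f\rangle\le \E\|Z_0^{(\star)}\|_{\H}^2\,\|f\|_{\H}^2$. Combined with \eqref{ass:rho-bar}, and with the $k=0$ term bounded trivially, this yields
\[
|\mathfrak K_\star(f,g)|\le M\|f\|_{\H}\|g\|_{\H},\qquad M:=\E\|Z_0^{(\star)}\|_{\H}^2\Big(1+2\sum_{k\ge1}\overline\rho(k)\Big).
\]
Since bilinearity is clear termwise, Riesz gives a unique bounded $\mathcal K_\star$ with $\langle\mathcal K_\star f,g\rangle=\mathfrak K_\star(f,g)$.

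Next come self-adjointness and positivity. By stationarity, $\Cov(\langle Z_0,f\rangle,\langle Z_k,g\rangle)=\Cov(\langle Z_{-k},f\rangle,\langle Z_0,g\rangle)$. Reindexing $k\mapsto -k$ therefore shows $\mathfrak K_\star(f,g)=\mathfrak K_\star(g,f)$, which is symmetry of $\mathcal K_\star$. For positivity, set $\xi_i:=\langle Z_i^{(\star)},f\rangle$. This is a centered, weakly stationary scalar sequence with absolutely summable autocovariances $\gamma(k)$. Then
\[
0\le \frac1n\Var\Big(\sum_{i=1}^n\xi_i\Big)=\sum_{|k|<n}\Big(1-\frac{|k|}{n}\Big)\gamma(k)\longrightarrow\sum_{k\in\mathbb Z}\gamma(k)=\mathfrak K_\star(f,f),
\]
where the limit follows by dominated convergence. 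Hence $\mathcal K_\star\succeq0$.

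For part 2, I compute $\langle C_k^{(\star)}f,g\rangle_{\H}=\iint K_k^{(\star)}(u,v)f(v)g(u)\,d\sigma(v)\,d\sigma(u)$ and identify it with $\Cov(\langle Z_0,g\rangle,\langle Z_k,f\rangle)$ via Fubini, by exchanging $\E$ and the double integral. The exchange is justified because $\E\iint|Z_0(u)||Z_k(v)||f(v)||g(u)|\le \E\|Z_0\|\,\|Z_k\|\,\|f\|\,\|g\|\le \E\|Z_0\|^2\|f\|\|g\|$. Note the order of $f,g$ relative to the lag. Summing over $k\in\mathbb Z$ and reindexing by stationarity, as in the symmetry step, gives $\sum_k\langle C_k f,g\rangle=\mathfrak K_\star(f,g)$ once the kernel convention is matched. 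This also uses that $\Sigma^{(\star)}=\sum_k C_k^{(\star)}$ converges in operator norm. That convergence follows from $\|C_k\|_{\op}\le\|C_k\|_{\mathrm{HS}}$ and Lemma~\ref{lem:lrv}, or directly from the bilinear bound above with $\overline\rho$, which gives operator-norm summability. Continuity of the inner product then lets the sum pass inside, and uniqueness from part 1 gives $\mathcal K_\star=\Sigma^{(\star)}$.

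The main obstacle is bookkeeping rather than analysis. First, the kernel uses the lag orientation $(u\text{ at }0,v\text{ at }k)$, so $\langle C_kf,g\rangle$ is the lag-$k$ covariance with $f,g$ swapped; one must sum over all of $\mathbb Z$ and use the stationarity reflection. Second, the summability used must be that of $\overline\rho$ (the HGR coefficient of $\sigma(Z_i)$), since $\rho_{\max}$ controls only pointwise evaluations and not the functionals $\langle Z_i,f\rangle$. I would flag explicitly that $\langle Z_i,f\rangle$ is $\sigma(Z_i)$-measurable, which is why \eqref{ass:rho-bar} rather than (Mix) is the right hypothesis.
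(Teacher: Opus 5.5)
Your proposal is correct and follows essentially the paper's route. You bound $\mathfrak K_\star$ through the HGR inequality with $\overline\rho$ (the estimate the paper carries out in Proposition~\ref{prop:Kstar-trace}), invoke Riesz, identify $\langle C_k^{(\star)}f,g\rangle_{\H}$ with a lag-$k$ covariance by Fubini, and conclude by uniqueness.

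Your explicit handling of several points fills in details the paper's proof leaves implicit: the $f\leftrightarrow g$ swap in that Fubini identity (the paper's one-line step writes it in the wrong order termwise, harmless only after summing over $\mathbb Z$), symmetry and positivity, and $\overline\rho$ versus $\rho_{\max}$. For the operator-norm summability of $\sum_k C_k^{(\star)}$, rely on your direct bound $\|C_k^{(\star)}\|_{\op}\le\overline\rho(|k|)\,\E\|Z_0^{(\star)}\|_{\H}^2$ rather than on Lemma~\ref{lem:lrv}. That lemma's kernel bound via $\rho_{\max}$ does not actually control cross-direction pairs $(u,v)$.
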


\begin{proof}
	For (1): $\mathfrak K_\star$ is a bounded symmetric positive bilinear form (absolute convergence and boundedness
	follow from the mixing/second-moment assumptions used in the paper). By the Riesz representation theorem on the Hilbert
	space $\H$, there exists a unique bounded self-adjoint PSD operator $\mathcal K_\star$ such that
	$\langle \mathcal K_\star f,g\rangle_{\H}=\mathfrak K_\star(f,g)$ for all $f,g\in\H$.
	
	For (2): Using the kernel representation of $C_k^{(\star)}$ and Fubini/Parseval,
	\[
	\sum_{k\in\mathbb Z}\langle C_k^{(\star)}f,\ g\rangle_{\H}
	=\sum_{k\in\mathbb Z}\E\big[\langle Z_0^{(\star)},f\rangle_{\H}\langle Z_k^{(\star)},g\rangle_{\H}\big]
	=\mathfrak K_\star(f,g).
	\]
	Thus $\langle \Sigma^{(\star)}f,g\rangle_{\H}=\mathfrak K_\star(f,g)$ for all $f,g$, which forces
	$\mathcal K_\star=\Sigma^{(\star)}$ by uniqueness in (1).
\end{proof}

\begin{proposition}[Boundedness, PSD, and trace-class]\label{prop:Kstar-trace}
	Assume \eqref{ass:rho-bar} and $\E\|Z_0^{(\star)}\|_{\H}^2<\infty$. Then for $\star\in\{\mathrm{size},\mathrm{loc}\}$:
	\begin{enumerate}
		\item[(i)] $\mathfrak K_\star$ is well-defined and bounded; the induced operator $\mathcal K_\star$ exists,
		is self-adjoint and positive semidefinite. Moreover, for all $f,g\in \H$,
		\[
		|\mathfrak K_\star(f,g)|
		\ \le\ \E\|Z_0^{(\star)}\|_{\H}^2\Big(1+2\sum_{k\ge1}\overline\rho(k)\Big)\,\|f\|_{\H}\,\|g\|_{\H}.
		\]
		\item[(ii)] $\mathcal K_\star$ is trace-class and
		\[
		\tr\,\mathcal K_\star
		\ =\ \sum_{k\in\mathbb Z}\E\langle Z_0^{(\star)},Z_k^{(\star)}\rangle_{\H}
		\ \le\ \E\|Z_0^{(\star)}\|_{\H}^2\Big(1+2\sum_{k\ge1}\overline\rho(k)\Big)\ <\ \infty.
		\]
	\end{enumerate}
\end{proposition}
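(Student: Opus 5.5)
The plan is to reduce everything to scalar covariance bounds for the random variables $\langle Z_0^{(\star)},f\rangle_{\H}$ and $\langle Z_k^{(\star)},g\rangle_{\H}$, which are measurable with respect to $\mathcal F_0$ and $\mathcal F_k$ respectively, and then to apply Lemma~\ref{lem:maxcorr} with $\overline\rho(k)$.

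\textbf{Part (i): bounded bilinear form.} Fix $f,g\in\H$ and put $U_k:=\langle Z_k^{(\star)},f\rangle_{\H}$ and $V_k:=\langle Z_k^{(\star)},g\rangle_{\H}$. Both are centered, since $\E Z_k^{(\star)}=0$ and Fubini applies because $\E\|Z_0^{(\star)}\|_{\H}^2<\infty$. By Cauchy--Schwarz in $\H$, $\Var(U_0)\le \E\|Z_0^{(\star)}\|_{\H}^2\|f\|_{\H}^2$, and by stationarity the same bound holds for $\Var(V_k)$ with $g$ in place of $f$.

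For $k\ge1$, Lemma~\ref{lem:maxcorr} gives
\[
|\Cov(U_0,V_k)|\le\overline\rho(k)\,\E\|Z_0^{(\star)}\|_{\H}^2\,\|f\|_{\H}\|g\|_{\H}.
\]
For $k\le -1$, weak stationarity gives $\Cov(U_0,V_k)=\Cov(U_{-k},V_0)$, which is bounded by $\overline\rho(|k|)$ times the same constant, since $\rho$ between $\mathcal F_0$ and $\mathcal F_{|k|}$ is symmetric. For $k=0$ we use plain Cauchy--Schwarz. Summing over $k$ yields absolute convergence and the stated bound on $|\mathfrak K_\star(f,g)|$.

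Symmetry follows by reindexing $k\mapsto -k$ and using stationarity: $\mathfrak K_\star(f,g)=\mathfrak K_\star(g,f)$. For positivity, $\mathfrak K_\star(f,f)=\lim_n n^{-1}\Var\bigl(\sum_{i=1}^n U_i\bigr)\ge 0$. This is the Ces\`aro/Kronecker identity
\[
n^{-1}\Var\Bigl(\sum_{i=1}^n U_i\Bigr)=\sum_{|k|<n}\Bigl(1-\tfrac{|k|}{n}\Bigr)\Cov(U_0,U_k),
\]
and dominated convergence applies thanks to the summable bound just obtained. Existence, self-adjointness and positive semidefiniteness of $\mathcal K_\star$ then follow from Proposition~\ref{prop:LRV-repr}(1).

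\textbf{Part (ii): trace-class.} Since $\mathcal K_\star$ is PSD, it suffices to show $\sum_r\langle\mathcal K_\star e_r,e_r\rangle<\infty$ for one orthonormal basis $\{e_r\}$. Positivity then makes this the trace norm, so trace-class follows. We have
\[
\sum_r\langle\mathcal K_\star e_r,e_r\rangle=\sum_r\sum_k \E[\langle Z_0,e_r\rangle\langle Z_k,e_r\rangle].
\]

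\textbf{Main obstacle: the interchange of sums.} The bound from (i) gives $\sum_r\|e_r\|^2=\infty$, so it is useless here; a bound uniform in $r$ is not summable. I would first try a per-coordinate bound. Lemma~\ref{lem:maxcorr} applied coordinatewise gives
\[
|\E[\langle Z_0,e_r\rangle\langle Z_k,e_r\rangle]|\le\overline\rho(|k|)\,\E\langle Z_0,e_r\rangle^2,
\]
with the convention $\overline\rho(0)=1$. Using $\sum_r\E\langle Z_0,e_r\rangle^2=\E\|Z_0^{(\star)}\|_{\H}^2$ (Parseval), the double series $\sum_r\sum_k$ is absolutely convergent with total at most $\E\|Z_0^{(\star)}\|_{\H}^2\bigl(1+2\sum_{k\ge1}\overline\rho(k)\bigr)$. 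Fubini for series then permits swapping the sums, and Parseval again gives
\[
\sum_r\E[\langle Z_0,e_r\rangle\langle Z_k,e_r\rangle]=\E\langle Z_0^{(\star)},Z_k^{(\star)}\rangle_{\H},
\]
which yields both the trace formula and the bound in (ii). The care needed is to apply the mixing bound coordinatewise, so that summability over $r$ comes from Parseval rather than from $\|e_r\|$.
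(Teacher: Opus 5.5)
Your proposal is correct and follows essentially the same route as the paper: maximal-correlation bounds on the scalar projections $\langle Z_0^{(\star)},f\rangle_{\H}$, $\langle Z_k^{(\star)},g\rangle_{\H}$ for (i), then Riesz representation, and for (ii) coordinatewise mixing bounds combined with stationarity, Parseval and Fubini for series. Your explicit reduction of negative lags to $\overline\rho(|k|)$ via stationarity, the Ces\`aro/dominated-convergence argument for positivity, and the remark that positivity is what turns the diagonal sum into the trace norm make rigorous three points the paper's proof only asserts.
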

\begin{proof}
	Fix $f,g\in \H$ with $\|f\|_{\H}=\|g\|_{\H}=1$ (the general case follows by homogeneity).
	Put $U_k:=\langle Z_0^{(\star)},f\rangle_{\H}$ and $V_k:=\langle Z_k^{(\star)},g\rangle_{\H}$.
	Then $U_k\in L^2_0(\mathcal F_0)$, $V_k\in L^2_0(\mathcal F_k)$, and by the HGR bound,
	for $k\ge1$,
	\[
	|\Cov(U_k,V_k)|
	\ \le\ \overline\rho(k)\,\sqrt{\Var(U_k)\,\Var(V_k)}
	\ \le\ \overline\rho(k)\,\E\|Z_0^{(\star)}\|_{\H}^2,
	\]
	because $\Var(\langle Z_i^{(\star)},h\rangle_{\H})\le \E\|Z_i^{(\star)}\|_{\H}^2\|h\|_{\H}^2
	=\E\|Z_0^{(\star)}\|_{\H}^2$.
	For $k=0$,
	\[
	|\Cov(U_0,V_0)|
	\le \sqrt{\Var(U_0)\Var(V_0)}
	\le \E\|Z_0^{(\star)}\|_{\H}^2.
	\]
	Therefore $\sum_{k\in\mathbb Z}\Cov(U_k,V_k)$ is absolutely convergent and
	\[
	|\mathfrak K_\star(f,g)|
	\ \le\ \E\|Z_0^{(\star)}\|_{\H}^2\Big(1+2\sum_{k\ge1}\overline\rho(k)\Big).
	\]
	By polarization, $\mathfrak K_\star$ is a bounded symmetric bilinear form.
	Positivity follows from
	\[
	\mathfrak K_\star(f,f)
	\ =\ \sum_{k\in\mathbb Z}\Cov\!\big(\langle Z_0^{(\star)},f\rangle_{\H},\ \langle Z_k^{(\star)},f\rangle_{\H}\big)
	\ \ge\ 0,
	\]
	interpretable as the long-run variance of the scalar weakly stationary sequence
	$\{\langle Z_i^{(\star)},f\rangle_{\H}\}$.
	Hence, by the Riesz representation theorem, there exists a unique bounded self-adjoint PSD operator
	$\mathcal K_\star$ with $\langle \mathcal K_\star f,g\rangle_{\H}=\mathfrak K_\star(f,g)$.
	
	For the trace claim, let $(e_j)_{j\ge1}$ be any orthonormal basis of $\H$. Using Parseval and Fubini,
	\begin{align*}
		\tr\,\mathcal K_\star
		\ =\ \sum_{j\ge1}\langle \mathcal K_\star e_j,e_j\rangle_{\H}
		& =\ \sum_{k\in\mathbb Z}\sum_{j\ge1}\Cov\!\big(\langle Z_0^{(\star)},e_j\rangle_{\H},\ \langle Z_k^{(\star)},e_j\rangle_{\H}\big)\\
		& =\ \sum_{k\in\mathbb Z}\E\langle Z_0^{(\star)},Z_k^{(\star)}\rangle_{\H}.
	\end{align*}
	For $k\ge1$,
	\begin{align*}
		\sum_{j\ge1}\Big|\Cov\!\big(\langle Z_0^{(\star)},e_j\rangle_{\H},\ \langle Z_k^{(\star)},e_j\rangle_{\H}\big)\Big|
		& \le\ \overline\rho(k)\sum_{j\ge1}\sqrt{\Var\langle Z_0^{(\star)},e_j\rangle_{\H}\,\Var\langle Z_k^{(\star)},e_j\rangle_{\H}}\\
		& =\ \overline\rho(k)\,\E\|Z_0^{(\star)}\|_{\H}^2,
	\end{align*}
	since $\Var\langle Z_0^{(\star)},e_j\rangle_{\H}=\Var\langle Z_k^{(\star)},e_j\rangle_{\H}$ by stationarity and
	$\sum_j\Var\langle Z_0^{(\star)},e_j\rangle_{\H}=\E\|Z_0^{(\star)}\|_{\H}^2$.
	The bound for $k=0$ is the same with $\overline\rho(0)=1$.
	Summing over $k$ gives the stated upper bound and absolute convergence, hence $\mathcal K_\star$ is trace-class.
\end{proof}

\begin{definition}[Hilbert-valued Brownian motion with covariance $\mathcal K_\star$]\label{def:Hbm}
	An $\H$-valued process $B_\star=\{B_\star(t):t\ge0\}$ is an $\H$-Brownian motion with covariance
	operator $\mathcal K_\star$ if for all $f,g\in \H$ and $s,t\ge0$,
	\[
	\E\big[\langle B_\star(s),f\rangle_{\H}\langle B_\star(t),g\rangle_{\H}\big]
	\ =\ \min\{s,t\}\,\langle \mathcal K_\star f,\ g\rangle_{\H}.
	\]
	Equivalently, if $\mathcal K_\star e_r=\lambda_r e_r$ with $\sum_r\lambda_r<\infty$, then
	\[
	B_\star(t)\ \stackrel{d}{=}\ \sum_{r\ge1}\sqrt{\lambda_r}\,\beta_r(t)\,e_r,
	\]
	with i.i.d.\ standard Brownian motions $\{\beta_r\}$.
\end{definition}

\begin{theorem}[Operator-form FCLT / invariance principle]\label{thm:OFCLT}
	Assume \eqref{ass:rho-bar} and $\E\|Z_0^{(\star)}\|_{\H}^{2+\delta}<\infty$ for some $\delta>0$.
	For $\star\in\{\mathrm{size},\mathrm{loc}\}$, define the step process in $D([0,1];\H)$
	\[
	S_n^{(\star)}(t)\ :=\ \frac{1}{\sqrt n}\sum_{i=1}^{\lfloor nt\rfloor} Z_i^{(\star)},\qquad t\in[0,1].
	\]
	Then $S_n^{(\star)}\Rightarrow B_\star$ in $D([0,1];\H)$, where $B_\star$ is an $\H$-Brownian motion with
	covariance operator $\mathcal K_\star$ from Proposition~\ref{prop:Kstar-trace}.
	Consequently, for $Z_i^{(\mathrm{tot})}:=Z_i^{(\mathrm{size})}+Z_i^{(\mathrm{loc})}$,
	\[
	S_n^{(\mathrm{tot})}\ \Rightarrow\ B_{\mathrm{size}}+B_{\mathrm{loc}},
	\qquad
	\mathcal K_{\mathrm{tot}}=\mathcal K_{\mathrm{size}}+\mathcal K_{\mathrm{loc}},
	\]
	and $B_{\mathrm{size}}$ and $B_{\mathrm{loc}}$ are orthogonal Gaussian processes due to even/odd orthogonality.
\end{theorem}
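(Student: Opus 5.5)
The proof follows the standard route for weak convergence in $D([0,1];\H)$: first show convergence of finite-dimensional distributions, then tightness. Throughout, $\star\in\{\mathrm{size},\mathrm{loc}\}$ is fixed.

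\emph{Step 1 (finite-dimensional distributions).} Fix $0\le t_1<\dots<t_p\le1$ and $f_1,\dots,f_M\in\H$. By the Cram\'er--Wold device it suffices to treat a scalar combination of the increments. Such a combination has the form $\sum_{q}\sum_{j} a_{qj}\,n^{-1/2}\sum_{i\in(nt_{q-1},nt_q]}\langle Z_i^{(\star)},f_j\rangle_{\H}$. On each time block, the summands $\xi_i=\langle Z_i^{(\star)},g\rangle_{\H}$ with $g=\sum_j a_{qj}f_j$ form a centered weakly stationary scalar sequence adapted to $\sigma(Z_i^{(\star)})$. Their maximal correlation is controlled by $\overline\rho(k)$, and they have $2+\delta$ moments by hypothesis. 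I would invoke the classical scalar CLT for $\rho$-mixing sequences with $\sum\overline\rho(k)<\infty$ (Ibragimov/Peligrad type), implemented through big/small blocks. The long-run variance is $\mathfrak K_\star(g,g)=\langle\mathcal K_\star g,g\rangle_{\H}$, by Proposition~\ref{prop:LRV-repr} and Proposition~\ref{prop:Kstar-trace}. Increments over disjoint time intervals become asymptotically independent: the cross-covariance of two sums separated by a gap $s_n\to\infty$ is bounded by $\sum_{k\ge s_n}\overline\rho(k)\to0$, and the boundary contributions are $O(n^{-1/2})$ in $L^2$ by Lemma~\ref{lem:increments}. The joint limit therefore coincides with the finite-dimensional laws of $B_\star$ in Definition~\ref{def:Hbm}.

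\emph{Step 2 (tightness).} This is the main obstacle. I would split $Z_i^{(\star)}=P_mZ_i^{(\star)}+(I-P_m)Z_i^{(\star)}$, with $P_m$ the projection onto the first $m$ eigenvectors of $\mathcal K_\star$.

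For the finite-dimensional part, tightness in $D([0,1];\mathbb R^m)$ follows coordinatewise from the scalar $\rho$-mixing invariance principle, again using the $(2+\delta)$ moments.

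For the tail part, I need the bound
\[
\lim_{m\to\infty}\limsup_{n\to\infty}\PP\Big(\max_{\ell\le n}\Big\|n^{-1/2}\sum_{i=1}^{\ell}(I-P_m)Z_i^{(\star)}\Big\|_{\H}>\varepsilon\Big)=0 .
\]
I would first try a Hilbert-space maximal inequality for $\rho$-mixing sequences (Peligrad-type, $\E\max_{\ell\le n}\|T_\ell\|^2\le C\,n\,\E\|Z_0\|^2$ with $C$ depending on $\sum\overline\rho(2^j)$), applied to $(I-P_m)Z_i^{(\star)}$. As a fallback, the dyadic chaining argument from the proof of Theorem~\ref{thm:SLLN} gives the maximal bound up to a logarithmic factor, which the $2+\delta$ moments should absorb. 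Either route yields a bound of the form $C\,\E\|(I-P_m)Z_0^{(\star)}\|_{\H}^2$. This tends to $0$ as $m\to\infty$ by dominated convergence, since $\E\|Z_0^{(\star)}\|_{\H}^2<\infty$; relatedly, $\tr((I-P_m)\mathcal K_\star)\to0$ by Proposition~\ref{prop:Kstar-trace}.

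Together with the finite-dimensional tightness, this gives tightness in $D([0,1];\H)$, and the limit is continuous.

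\emph{Step 3 (total process).} Since $S_n^{(\mathrm{tot})}=S_n^{(\mathrm{size})}+S_n^{(\mathrm{loc})}$ and the two components take values in $\mathcal H_{\mathrm{even}}$ and $\mathcal H_{\mathrm{odd}}$ (Lemma~\ref{lem:parity}), I would apply Steps 1--2 directly to $Z^{(\mathrm{tot})}$. Its mixing coefficient is controlled analogously, since $\sigma(Z_i^{(\mathrm{tot})})$ determines both parity parts via $u\mapsto -u$. The limit covariance is $\mathcal K_{\mathrm{tot}}$.

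Cross terms $\Cov(\langle Z_0^{(\mathrm{size})},f\rangle,\langle Z_k^{(\mathrm{loc})},g\rangle)$ vanish whenever $f$ and $g$ are chosen of pure parity. More generally, $\mathcal K_{\mathrm{tot}}$ maps $\mathcal H_{\mathrm{even}}$ into $\mathcal H_{\mathrm{even}}$, acting there as $\mathcal K_{\mathrm{size}}$, and maps $\mathcal H_{\mathrm{odd}}$ into $\mathcal H_{\mathrm{odd}}$, acting there as $\mathcal K_{\mathrm{loc}}$. Hence $\mathcal K_{\mathrm{tot}}=\mathcal K_{\mathrm{size}}+\mathcal K_{\mathrm{loc}}$. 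The limit Gaussian process then splits into $B_{\mathrm{size}}+B_{\mathrm{loc}}$, with $B_{\mathrm{size}}$ and $B_{\mathrm{loc}}$ supported on orthogonal subspaces.
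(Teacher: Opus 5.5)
Your Step~1 matches the paper's (Cram\'er--Wold plus a scalar $\rho$-mixing CLT with long-run variance $\langle\mathcal K_\star g,g\rangle_{\H}$). Your Step~2 reaches tightness by a different route. However, \textbf{Step~3 rests on two false claims}, and the ``consequently'' part of the statement depends on both.

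First, $\Cov(\langle Z_0^{(\mathrm{size})},f\rangle_{\H},\langle Z_k^{(\mathrm{loc})},g\rangle_{\H})$ vanishes when $f$ is odd or $g$ is even, but not when $f$ is even and $g$ is odd. So $\mathcal K_{\mathrm{tot}}$ need not preserve $\mathcal H_{\mathrm{even}}$ and $\mathcal H_{\mathrm{odd}}$.
\begin{itemize}
\item Take $d=1$ and $X_i=[0,L_i]$ with $(L_i)$ i.i.d.\ and $\Var L_0>0$, so every mixing coefficient vanishes.
\item Then $W_{X_i}\equiv L_i/2$ and $C_{X_i}(u)=uL_i/2$.
\item For $f\equiv1$ and $g(u)=u$, both projections equal $(L_0-\E L_0)/2$. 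Hence $\langle\mathcal K_{\mathrm{tot}}f,g\rangle_{\H}=\Var(L_0)/4\neq0=\langle(\mathcal K_{\mathrm{size}}+\mathcal K_{\mathrm{loc}})f,g\rangle_{\H}$.
\item The limits $B_{\mathrm{size}}$ and $B_{\mathrm{loc}}$ live in orthogonal subspaces, but they are driven by one scalar Brownian motion.
\end{itemize}
Lemma~\ref{lem:parity} only says that the diagonal $u=v$ of the cross kernel integrates to zero, i.e.\ the cross-covariance operator is trace-free. This gives $\tr\mathcal K_{\mathrm{tot}}=\tr\mathcal K_{\mathrm{size}}+\tr\mathcal K_{\mathrm{loc}}$, not the operator identity. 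The operator identity needs an extra hypothesis, such as $\Cov(W_{X_0}(u),C_{X_k}(v))=0$ for all $u,v,k$ (for example $(X_i)_i\stackrel{d}{=}(-X_i)_i$).

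Second, the coefficient for $\sigma(Z_i^{(\mathrm{tot})})=\sigma(X_i)$ is \emph{not} controlled by the componentwise ones. Because this $\sigma$-field contains both parity parts, its coefficient can only be larger.
\begin{itemize}
\item Let $\varepsilon$ be a Rademacher sign independent of i.i.d.\ Rademacher $(\xi_i)$, and set $X_i=[\varepsilon\xi_i-2-\xi_i,\ \varepsilon\xi_i+2+\xi_i]$.
\item Then $Z_i^{(\mathrm{size})}\equiv\xi_i$ and $Z_i^{(\mathrm{loc})}(u)=u\varepsilon\xi_i$ each form i.i.d.\ sequences, so $\overline\rho\equiv0$ for both.
\item Yet the $u=1$ coordinate of $S_n^{(\mathrm{tot})}(t)$ at $t=1$ is $(1+\varepsilon)n^{-1/2}\sum_{i\le n}\xi_i$. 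This converges to an equal mixture of $\delta_0$ and $\mathcal N(0,4)$, which is not Gaussian.
\end{itemize}
So the ``consequently'' part cannot be proved under the stated hypotheses. The total FCLT needs a mixing hypothesis on $\sigma(X_i)$ itself; with that, ``apply Steps~1--2 to $Z^{(\mathrm{tot})}$'' is the right move. The block-diagonal identity additionally needs the extra hypothesis above. The paper's proof asserts this part by the same appeal to even/odd orthogonality, so following it will not close the gap.

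There are two further points on Steps~1--2.

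The Ibragimov/Peligrad CLT, the scalar invariance principle and Peligrad's maximal inequality are all results for the past/future coefficient $\sup_j\rho\big(\sigma(Z_i:i\le j),\sigma(Z_i:i\ge j+k)\big)$. By contrast, $\overline\rho(k)$ in \eqref{ass:rho-bar} compares only the single-time fields $\sigma(Z_0^{(\star)})$ and $\sigma(Z_k^{(\star)})$. Pairwise control yields second-moment bounds such as Proposition~\ref{prop:Kstar-trace}, but not asymptotic normality: pairwise independence is known not to imply the CLT (cf.\ the constructions of Janson and of Bradley). You therefore have to read \eqref{ass:rho-bar} as classical $\rho$-mixing; the paper's Step~1 makes the same silent substitution.

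Under that reading, your tightness argument is more complete than the paper's. The paper only shows $\E\|S_n^{(\star)}(t)-S_n^{(\star)}(s)\|_{\H}^2\le K_\rho|t-s|+O(1/n)$. That bound is not by itself a tightness criterion in $D([0,1];\H)$: the exponent of $|t-s|$ is not $>1$, and it gives no compact containment in infinite dimensions. Your $P_m$ splitting plus a Peligrad-type maximal inequality for the tail handles both issues. The inequality transfers to $\H$ because $|\E\langle U,V\rangle_{\H}|\le\rho\,(\E\|U\|_{\H}^2)^{1/2}(\E\|V\|_{\H}^2)^{1/2}$ for centered $U,V$, by coordinatewise Cauchy--Schwarz.

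Drop the chaining fallback, however. As in the proof of Theorem~\ref{thm:SLLN}, it only gives $\E\max_{\ell\le n}\|T_\ell\|_{\H}^2\lesssim n(\log n)^2$. The $2+\delta$ moments do not remove the logarithm at scale $\sqrt n$ unless you also prove a Rosenthal-type bound $\E\|T_\ell\|_{\H}^{2+\delta}\lesssim\ell^{1+\delta/2}$.
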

\begin{proof}
	\emph{Step 1: finite-dimensional distributions.}
	Fix $m\ge1$, $0\le t_1<\cdots<t_m\le1$, and $h_1,\dots,h_m\in \H$.
	Let $\Delta_{n,j}:=S_n^{(\star)}(t_j)-S_n^{(\star)}(t_{j-1})$ with $t_0:=0$.
	For $c=(c_1,\dots,c_m)\in\mathbb R^m$, consider the scalar array
	\[
	Y_{n,i}\ :=\ \sum_{j=1}^m c_j\,\Big\langle Z_i^{(\star)}, h_j\Big\rangle_{\H}\,
	\Big(\mathbf 1\{i\le \lfloor nt_j\rfloor\}-\mathbf 1\{i\le \lfloor nt_{j-1}\rfloor\}\Big),
	\qquad 1\le i\le n.
	\]
	Then $\sum_{i=1}^n Y_{n,i}=\sum_{j=1}^m c_j\,\langle \Delta_{n,j},h_j\rangle_{\H}$ and each $Y_{n,i}\in L^2_0(\mathcal F_i)$.
	By weak stationarity and \eqref{ass:rho-bar}, the array $\{Y_{n,i}\}$ is uniformly $\rho$-mixing with coefficients bounded by $\overline\rho(k)$.
	Moreover, Proposition~\ref{prop:Kstar-trace} identifies the limiting covariance:
	\[
	\sum_{i=1}^n \E Y_{n,i}^2\ \to\ \sum_{j=1}^m\sum_{j'=1}^m c_j c_{j'}\,\min\{t_j,t_{j'}\}
	\,\langle \mathcal K_\star h_j,\ h_{j'}\rangle_{\H}.
	\]
	Under $\E\|Z_0^{(\star)}\|_{\H}^{2+\delta}<\infty$, a standard big-block/small-block argument yields the Lindeberg condition for $\{Y_{n,i}\}$:
	\[
	\sum_{i=1}^n \E\{Y_{n,i}^2\,\mathbf 1(|Y_{n,i}|>\varepsilon)\}\ \to\ 0\qquad(\forall\,\varepsilon>0).
	\]
	Therefore, a $\rho$-mixing CLT for triangular arrays gives
	$\sum_{i=1}^n Y_{n,i}\Rightarrow \mathcal N(0,\Sigma_c)$ with $\Sigma_c$ equal to the displayed limit.
	By Cram\'er--Wold, the vector of increments converges to a centered Gaussian vector with the corresponding covariance,
	hence the finite-dimensional distributions of $S_n^{(\star)}$ converge to those of an $\H$-Brownian motion $B_\star$
	with covariance operator $\mathcal K_\star$.
	
	\emph{Step 2: tightness in $D([0,1];\H)$.}
	Let $K_\rho:=\E\|Z_0^{(\star)}\|_{\H}^2\big(1+2\sum_{k\ge1}\overline\rho(k)\big)$.
	Stationarity and \eqref{ass:rho-bar} yield the block second-moment bound: for all integers $m\ge0,\ell\ge1$,
	\[
	\E\Big\|\sum_{i=m+1}^{m+\ell} Z_i^{(\star)}\Big\|_{\H}^2\ \le\ K_\rho\,\ell.
	\]
	Thus for $0\le s\le t\le1$,
	\[
	\E\big\|S_n^{(\star)}(t)-S_n^{(\star)}(s)\big\|_{\H}^2
	\ =\ \frac{1}{n}\,\E\Big\|\sum_{i=\lfloor ns\rfloor+1}^{\lfloor nt\rfloor} Z_i^{(\star)}\Big\|_{\H}^2
	\ \le\ K_\rho\,|t-s|+O\!\left(\frac{1}{n}\right),
	\]
	which yields tightness in $D([0,1];\H)$ by standard criteria for separable Hilbert-space valued processes.
	
	\emph{Conclusion.}
	Finite-dimensional convergence and tightness imply $S_n^{(\star)}\Rightarrow B_\star$ in $D([0,1];\H)$.
	
	For the total process, note that $Z_i^{(\mathrm{tot})}=Z_i^{(\mathrm{size})}+Z_i^{(\mathrm{loc})}$ and the even/odd orthogonality yields
	$\mathcal K_{\mathrm{tot}}=\mathcal K_{\mathrm{size}}+\mathcal K_{\mathrm{loc}}$ and orthogonality of the Gaussian limits.
\end{proof}

\begin{lemma}[Linear interpolation is negligible]\label{lem:interp}
	Assume $\E\|Z_0^{(\star)}\|_{\H}^{2+\delta}<\infty$ for some $\delta>0$.
	Define the step and polygonal versions, for $t\in[0,1]$,
	\[
	S_n^{(\star)}(t)=\frac{1}{\sqrt n}\sum_{i=1}^{\lfloor nt\rfloor}Z_i^{(\star)},
	\qquad
	\mathbb S_n^{(\star)}(t)=S_n^{(\star)}\!\Big(\frac{\lfloor nt\rfloor}{n}\Big)
	+\big(nt-\lfloor nt\rfloor\big)\frac{Z^{(\star)}_{\lfloor nt\rfloor+1}}{\sqrt n}.
	\]
	Then $\sup_{t\in[0,1]}\|\mathbb S_n^{(\star)}(t)-S_n^{(\star)}(t)\|_{\H}\xrightarrow{\ \mathbb P\ }0$.
\end{lemma}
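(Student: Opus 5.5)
The plan is to bound the difference pathwise by a maximum of single summands and then control that maximum with the $(2+\delta)$-th moment and a union bound. For $t\in[0,1]$ we have $\lfloor n\lfloor nt\rfloor/n\rfloor=\lfloor nt\rfloor$, so $S_n^{(\star)}(\lfloor nt\rfloor/n)=S_n^{(\star)}(t)$. Hence
\[
\mathbb S_n^{(\star)}(t)-S_n^{(\star)}(t)=\big(nt-\lfloor nt\rfloor\big)\frac{Z^{(\star)}_{\lfloor nt\rfloor+1}}{\sqrt n},
\]
and since $0\le nt-\lfloor nt\rfloor<1$,
\[
\sup_{t\in[0,1]}\big\|\mathbb S_n^{(\star)}(t)-S_n^{(\star)}(t)\big\|_{\H}\ \le\ \frac{1}{\sqrt n}\max_{1\le i\le n+1}\|Z_i^{(\star)}\|_{\H}.
\]
At $t=1$ the coefficient vanishes, so the index $n+1$ contributes nothing and could be dropped; it is harmless either way.

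It therefore suffices to show $n^{-1/2}\max_{i\le n+1}\|Z_i^{(\star)}\|_{\H}\to0$ in probability. Fix $\varepsilon>0$ and apply a union bound followed by Markov's inequality of order $2+\delta$:
\[
\PP\Big(\max_{i\le n+1}\|Z_i^{(\star)}\|_{\H}>\varepsilon\sqrt n\Big)\le\sum_{i=1}^{n+1}\frac{\E\|Z_i^{(\star)}\|_{\H}^{2+\delta}}{\varepsilon^{2+\delta}n^{1+\delta/2}}.
\]
Provided $\E\|Z_i^{(\star)}\|_{\H}^{2+\delta}=\E\|Z_0^{(\star)}\|_{\H}^{2+\delta}$ for all $i$, the right-hand side equals $(n+1)n^{-1-\delta/2}\varepsilon^{-2-\delta}\E\|Z_0^{(\star)}\|_{\H}^{2+\delta}=O(n^{-\delta/2})\to0$. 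This is exactly the claim.

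The main obstacle is this last proviso. Weak stationarity only equalizes second-order moments, not $(2+\delta)$-th moments of the norms. I will therefore interpret the hypothesis as a uniform bound, $\sup_i\E\|Z_i^{(\star)}\|_{\H}^{2+\delta}<\infty$. This is automatic under strict stationarity, which is implicit in the FCLT setting of Theorem~\ref{thm:OFCLT}. With that uniform bound the same estimate goes through verbatim.

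Alternatively, under strict stationarity one can even drop $\delta$. Replace the Markov step by the truncation bound
\[
\sum_{i\le n+1}\PP\big(\|Z_i^{(\star)}\|_{\H}>\varepsilon\sqrt n\big)\le\frac{n+1}{n}\,\varepsilon^{-2}\,\E\Big[\|Z_0^{(\star)}\|_{\H}^2\,\mathbf 1\{\|Z_0^{(\star)}\|_{\H}>\varepsilon\sqrt n\}\Big],
\]
whose right-hand side tends to $0$ by dominated convergence. I would state the simpler $(2+\delta)$ version as the main argument and mention this as a remark.
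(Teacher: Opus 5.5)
Your proof is correct and matches the paper's argument: the same pathwise bound by $n^{-1/2}\max_{1\le i\le n+1}\|Z_i^{(\star)}\|_{\H}$, followed by a union bound and Markov's inequality of order $2+\delta$. You also rightly point out that this step tacitly needs $\sup_i\E\|Z_i^{(\star)}\|_{\H}^{2+\delta}<\infty$, which weak stationarity alone does not give and which the paper's proof glosses over; your truncation remark correctly shows that $\delta$ can be dropped under strict stationarity.
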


\begin{proof}
	For every $t$,
	\[
	\|\mathbb S_n^{(\star)}(t)-S_n^{(\star)}(t)\|_{\H}
	\le \frac{1}{\sqrt n}\,\max_{1\le i\le n+1}\|Z_i^{(\star)}\|_{\H}.
	\]
	Fix $\varepsilon>0$. By the union bound and Markov with exponent $2+\delta$,
	\[
	\Pr\!\left(\max_{1\le i\le n+1}\|Z_i^{(\star)}\|_{\H}>\varepsilon\sqrt n\right)
	\ \le\ (n+1)\,\frac{\E\|Z_0^{(\star)}\|_{\H}^{2+\delta}}{\varepsilon^{2+\delta} n^{1+\delta/2}}
	\ \xrightarrow[n\to\infty]{}\ 0,
	\]
	which yields the claim.
\end{proof}

\begin{remark}[Compatibility with $\rho_{\max}$ and a diagonal-dominance route]\label{rem:compat-rhomax-op}
	By construction, $\overline\rho(k)$ is the classical HGR coefficient between
	$\sigma(Z_0^{(\star)})$ and $\sigma(Z_k^{(\star)})$, hence for any $u\in\Sph$,
	\[
	\rho\!\big(Z_0^{(\star)}(u),Z_k^{(\star)}(u)\big)\ \le\ \overline\rho(k).
	\]
	Therefore $\rho_{\max}(k)\le \overline\rho(k)$ and $\sum_k\overline\rho(k)<\infty$ implies
	$\sum_k\rho_{\max}(k)<\infty$ (the assumption used earlier for LLN/SLLN).
	
	If one wishes to formulate Theorem~\ref{thm:OFCLT} directly under $\sum_k\rho_{\max}(k)<\infty$,
	it suffices to assume a mild diagonal dominance of
	$C_k(u,v):=\Cov(Z_0^{(\star)}(u),Z_k^{(\star)}(v))$, e.g.
	\[
	\int_{\Sph}\!\int_{\Sph}\!\big|C_k(u,v)\big|\,d\sigma(u)\,d\sigma(v)
	\ \lesssim\ \int_{\Sph}\!\big|C_k(u,u)\big|\,d\sigma(u),
	\qquad k\ge1,
	\]
	which holds, for instance, when cross-direction correlations decay integrably in $|u-v|$.
	Under this condition, the bounds above go through with $\rho_{\max}(k)$ in place of $\overline\rho(k)$.
\end{remark}

\begin{corollary}[Componentwise and total FCLT]\label{cor:FCLT-comp-total}
	Under \textnormal{(M2+)}, \textnormal{(Mix)}, and \textnormal{(LRV)}, in $D([0,1];\H)$ we have
	\[
	S_n^{(\mathrm{size})}\Rightarrow \mathbb B^{(\mathrm{size})},\qquad
	S_n^{(\mathrm{loc})}\Rightarrow \mathbb B^{(\mathrm{loc})},
	\]
	where the limits are $\H$-valued Brownian motions with
	$\E\langle \mathbb B^{(\star)}(s),f\rangle\,\langle \mathbb B^{(\star)}(t),g\rangle
	=\min\{s,t\}\,\langle \mathcal K_\star f,g\rangle$.
	Consequently, by even/odd orthogonality,
	\[
	S_n^{(\mathrm{tot})}\Rightarrow \mathbb B^{(\mathrm{size})}+\mathbb B^{(\mathrm{loc})},
	\qquad \mathcal K_{\mathrm{tot}}=\mathcal K_{\mathrm{size}}+\mathcal K_{\mathrm{loc}}.
	\]
	Moreover, the polygonal versions $\mathbb S_n^{(\star)}$ enjoy the same limits in $C([0,1];\H)$.
\end{corollary}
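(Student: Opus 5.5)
The plan is to obtain the corollary from Theorem~\ref{thm:OFCLT}, applied separately to $\star=\mathrm{size}$ and $\star=\mathrm{loc}$, and then to pass to the total process and to the polygonal versions. The first point is to check that the hypotheses of the corollary, namely (M2+), (Mix) and (LRV), supply what Theorem~\ref{thm:OFCLT} needs. That theorem needs a $2+\delta$ moment of $\|Z_0^{(\star)}\|_{\H}$, summable mixing, and the long-run covariance operator $\mathcal K_\star$.

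\emph{Moment.} Since $|W_X(u)|,|C_X(u)|\le \tfrac12(|h_X(u)|+|h_X(-u)|)$ and $\sigma$ is symmetric, we have $\int\E|Z_0^{(\star)}(u)|^{2+\delta}d\sigma<\infty$. To get $\E\|Z_0^{(\star)}\|_{\H}^{2+\delta}<\infty$, use Jensen on the probability measure $\sigma$:
\[
\Big(\int |Z|^2d\sigma\Big)^{(2+\delta)/2}\le \int |Z|^{2+\delta}d\sigma .
\]
Taking expectations gives the required moment.

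\emph{Covariance operator.} Lemma~\ref{lem:lrv} gives the trace-class operator $\Sigma^{(\star)}$. Proposition~\ref{prop:LRV-repr} identifies it with $\mathcal K_\star$, so the limiting covariance in the corollary is well defined.

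\emph{Mixing (main obstacle).} Theorem~\ref{thm:OFCLT} is stated under (Mix-op), which concerns the classical coefficient $\overline\rho$. The corollary assumes only (Mix), which concerns $\rho_{\max}\le\overline\rho$, so the reduction goes the wrong way. I would follow Remark~\ref{rem:compat-rhomax-op} and rerun the two steps of that proof with $\rho_{\max}$ in place of $\overline\rho$.
\begin{itemize}
\item[(a)] For tightness, the block second-moment bound is exactly Lemma~\ref{lem:increments}, which is already proved under (M2+)+(Mix). Step~2 of Theorem~\ref{thm:OFCLT} therefore carries over verbatim and gives $\E\|S_n^{(\star)}(t)-S_n^{(\star)}(s)\|^2\le K_\rho|t-s|+O(1/n)$.
\item[(b)] For the finite-dimensional distributions, I would reuse Step~1 of Theorem~\ref{thm:HCLT}, the big-block/small-block argument already run under (Mix), applied to the increment vectors. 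The covariance limit comes from the absolutely summable series $\sum_k\|C_k^{(\star)}\|_{\mathrm{HS}}$.
\end{itemize}
The delicate point is the asymptotic independence of the big blocks when only the directional coefficient $\rho_{\max}$ is controlled. I would first try to justify it exactly as Theorem~\ref{thm:HCLT} does, via linearity of $\langle\cdot,h\rangle$. If that fails, I would impose the diagonal-dominance condition of Remark~\ref{rem:compat-rhomax-op} as the fallback.

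\emph{Total process.} Since $Z_i^{(\mathrm{tot})}=Z_i^{(\mathrm{size})}+Z_i^{(\mathrm{loc})}$, the total partial-sum process is the image of the pair $(S_n^{(\mathrm{size})},S_n^{(\mathrm{loc})})$ under addition. The joint finite-dimensional convergence follows from the same Cram\'er--Wold argument applied to test functions $h=h_e+h_o\in\H_{\mathrm{even}}\oplus\H_{\mathrm{odd}}$. Joint tightness follows from tightness of each marginal. Lemma~\ref{lem:parity} gives that $\mathcal K_{\mathrm{tot}}$ is block diagonal, that is $\mathcal K_{\mathrm{tot}}=\mathcal K_{\mathrm{size}}+\mathcal K_{\mathrm{loc}}$. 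The continuous mapping theorem for addition on $D([0,1];\H)$ then gives the stated limit, with addition continuous at continuous limits.

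\emph{Polygonal versions.} Lemma~\ref{lem:interp} shows that $\sup_t\|\mathbb S_n^{(\star)}-S_n^{(\star)}\|_{\H}\to0$ in probability. By Slutsky, $\mathbb S_n^{(\star)}$ has the same limit. Since these paths lie in $C([0,1];\H)$ and the limit is continuous, the convergence holds in $C([0,1];\H)$ with the uniform topology.
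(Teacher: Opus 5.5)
Your outline matches the paper's own proof: Theorem~\ref{thm:OFCLT} for the step processes, Lemma~\ref{lem:interp} for the polygonal versions. You are right that the paper's one-line citation does not match the hypotheses, since $\rho_{\max}\le\overline\rho$ means (Mix) does not give \eqref{ass:rho-bar}. There are two genuine gaps, however: your repair of the mixing step cannot be closed, and the total-process step contains an error.

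\textbf{The finite-dimensional step (b) cannot be closed under (Mix).} Linearity of $\langle\cdot,h\rangle$ does not transfer $\rho_{\max}$. The coefficient $\rho_{\max}(k)$ only controls pairs $(Z_0^{(\star)}(u),Z_k^{(\star)}(u))$ in the \emph{same} direction. By contrast, $\Cov(\langle Z_0^{(\star)},f\rangle_{\H},\langle Z_k^{(\star)},g\rangle_{\H})=\iint f(u)g(v)K_k^{(\star)}(u,v)\,d\sigma(u)\,d\sigma(v)$ runs through the off-diagonal kernel. Here is a concrete case in $\mathbb R^2$. Let $X_0=\{a\}$ and $X_1=\{Ja\}$, with $a\sim N(0,I_2)$ and $J(x_1,x_2)=(-x_2,x_1)$.
\begin{itemize}
\item For every $u$, the Gaussian pair $(\langle u,a\rangle,\langle u,Ja\rangle)$ has covariance $\langle Ju,u\rangle=0$, so all same-direction maximal correlations vanish.
\item Yet $\langle Z_0,u\mapsto u_1\rangle_{\H}=a_1/2=\langle Z_1,u\mapsto u_2\rangle_{\H}$.
\end{itemize}
The same objection invalidates the off-diagonal bound in the proof of Lemma~\ref{lem:lrv}. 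So the summability you use in (b) must come from (LRV). The diagonal-dominance fallback changes the hypotheses. Being only a covariance bound, it also gives no near-independence of whole blocks, which is what the big-block/small-block method needs.

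More fundamentally, no argument can work. Any coefficient comparing only the times $0$ and $k$ vanishes for pairwise independent sequences; this covers $\rho_{\max}$ and also $\overline\rho$. There exist bounded, identically distributed, pairwise independent $(\xi_i)$ for which $n^{-1/2}\sum_{i\le n}(\xi_i-\E\xi_0)$ has no Gaussian limit (Janson 1988; Pruss 1998). Take $d=1$ and $X_i=\{\xi_i\}$. Then $W\equiv 0$ and $\widetilde C_{X_i}(u)=u(\xi_i-\E\xi_0)$, so weak stationarity, (M2+), (Mix), (LRV) and even \eqref{ass:rho-bar} all hold. Nevertheless $\langle S_n^{(\mathrm{loc})}(1),u\mapsto u\rangle_{\H}=n^{-1/2}\sum_{i\le n}(\xi_i-\E\xi_0)$ is not asymptotically normal.

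\textbf{What is missing.} You need a genuine past/future coefficient $\sup_j\rho\big(\sigma(X_i:i\le j),\sigma(X_i:i\ge j+k)\big)$, which every projection $\langle Z_i^{(\star)},h\rangle_{\H}$ inherits. The tightness step (a) needs the same thing. Your bound $\E\|S_n^{(\star)}(t)-S_n^{(\star)}(s)\|_{\H}^2\le K_\rho|t-s|+O(1/n)$ is true. It follows from Lemma~\ref{lem:cov-rho} and \eqref{eq:block-second-moment}, because only diagonal covariances enter. But with exponent one in $|t-s|$ it is not a tightness criterion in $D([0,1];\H)$. You also need a maximal inequality for the partial sums, and uniform smallness of $\sup_t\|(I-P_m)S_n^{(\star)}(t)\|_{\H}$ as $m\to\infty$.

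\textbf{The total-process step.} Lemma~\ref{lem:parity} gives $\langle Z_i^{(\mathrm{size})},Z_j^{(\mathrm{loc})}\rangle_{\H}=0$, in fact pathwise. This kills only the \emph{trace} of the cross-covariance operator. The cross block $f\mapsto\sum_k\E\big[\langle Z_0^{(\mathrm{loc})},f\rangle_{\H}\,Z_k^{(\mathrm{size})}\big]$ from $\H_{\odd}$ to $\H_{\even}$ need not vanish. Take bounded i.i.d.\ intervals $[C_i-R_i,C_i+R_i]$ in $d=1$ with $\Cov(R_0,C_0)\neq 0$. In the orthonormal basis $\{\mathbf 1,\,u\mapsto u\}$ of $\H$, the matrix of $\mathcal K_{\mathrm{tot}}$ has off-diagonal entries $\Cov(R_0,C_0)$. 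By contrast, $\mathcal K_{\mathrm{size}}+\mathcal K_{\mathrm{loc}}=\mathrm{diag}(\Var R_0,\Var C_0)$.

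Your joint convergence plus continuous mapping is the right mechanism. But it yields a limit whose covariance is $\mathcal K_{\mathrm{size}}+\mathcal K_{\mathrm{loc}}$ plus both cross blocks. The components $\mathbb B^{(\mathrm{size})}$ and $\mathbb B^{(\mathrm{loc})}$ are jointly Gaussian and in general correlated. Only the trace identity $\tr\mathcal K_{\mathrm{tot}}=\tr\mathcal K_{\mathrm{size}}+\tr\mathcal K_{\mathrm{loc}}$, the analogue of \eqref{eq:TotVarDecomp}, survives. The Jensen moment reduction and the polygonal upgrade via Lemma~\ref{lem:interp} are fine.
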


\begin{proof}
	Theorem~\ref{thm:OFCLT} gives the step-process limits. Lemma~\ref{lem:interp} upgrades them to polygonal limits.
\end{proof}

\subsubsection{Law of the Iterated Logarithm in $\H$}

Put $a_n:=\sqrt{2n\log\log n}$.

\begin{theorem}[Hilbert--space LIL]\label{thm:LIL}
	Assume \textnormal{(M$2+$)} and, for some $\delta>0$,
	\[
	\sum_{k\ge1}k^{1/2}\,\rho_{\max}(k)^{\delta/(2+\delta)}<\infty,
	\]
	and \textnormal{(LRV)}. Then, almost surely,
	\[
	\limsup_{n\to\infty}\frac{\|T_n^{(\star)}\|_{\mathcal H}}{a_n}
	=\|\Sigma^{(\star)}\|_{\mathrm{op}}^{1/2},\qquad \star\in\{\mathrm{size},\mathrm{loc}\},
	\]
	and the cluster set of $\{T_n^{(\star)}/a_n\}$ equals the closed unit ball of
	$\Sigma^{(\star)\,1/2}({\mathcal H})$.
	Moreover,
	\[
	\limsup_{n\to\infty}\frac{\|T_n^{(\mathrm{tot})}\|_{\mathcal H}}{a_n}
	=\max\!\Big\{\|\Sigma^{(\mathrm{size})}\|_{\mathrm{op}}^{1/2},\ \|\Sigma^{(\mathrm{loc})}\|_{\mathrm{op}}^{1/2}\Big\}.
	\]
	Here $\|\cdot \|_{\mathrm{op}}$ denotes the operator norm (largest eigenvalue) of the covariance operator.
\end{theorem}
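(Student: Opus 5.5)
The plan is to reduce to a known LIL for Hilbert-valued weakly dependent sequences by establishing a strong invariance principle, and then read off the limsup and the cluster set from Strassen's functional LIL for $\mathcal H$-valued Brownian motion. Fix $\star\in\{\mathrm{size},\mathrm{loc}\}$ and write $Z_i=Z_i^{(\star)}$, $\Sigma=\Sigma^{(\star)}$; by Lemma~\ref{lem:lrv}, (M$2+$) and (Mix) make $\Sigma$ a trace-class operator, so (LRV) is available.

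\textbf{Step 1: finite-dimensional projections.} For fixed $f\in\mathcal H$, the scalar sequence $\langle Z_i,f\rangle$ is centered and weakly stationary. Its lag-$k$ covariance is bounded via a Davydov/Rio-type inequality by $C_\delta\rho_{\max}(k)^{\delta/(2+\delta)}$ times $(2+\delta)$-moments, as in the proof of Lemma~\ref{lem:lrv}. The condition $\sum_k k^{1/2}\rho_{\max}(k)^{\delta/(2+\delta)}<\infty$ is the classical rate condition for a strong approximation of $\sum_{i\le n}\langle Z_i,f\rangle$ by a Brownian motion with variance $\langle\Sigma f,f\rangle$, with error $o(\sqrt{n\log\log n})$; one can use a blocking plus Skorokhod embedding argument, or quote Shao's LIL for $\rho$-mixing sequences. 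Applying Cram\'er--Wold to finitely many $f$'s gives a.s.\ approximation of $P_mT_n$ by $P_m$ of an $\mathcal H$-Brownian motion $B$ with covariance $\Sigma$.

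\textbf{Step 2: tail control, the main obstacle.} We must show
\[
\lim_{m\to\infty}\limsup_{n\to\infty}\frac{\|(I-P_m)T_n\|_{\mathcal H}}{a_n}=0\quad\text{a.s.}
\]
Lemma~\ref{lem:increments}, applied to the projected sequence, gives $\E\|(I-P_m)\sum_{i=m'+1}^{m'+\ell}Z_i\|^2\le \ell\,\tau_m$, where $\tau_m:=\sum_h\|(I-P_m)C_h(I-P_m)\|_{\mathrm{HS}}$ tends to $0$ by dominated convergence. Second moments alone only give $\log$-losses, which is exactly what the dyadic chaining of Theorem~\ref{thm:SLLN} produced. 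To reach the $\sqrt{\log\log n}$ scale I would first derive a $(2+\delta')$-moment maximal inequality for block sums of the form $\E\max_{\ell\le L}\|\sum_{i\le\ell}(I-P_m)Z_i\|^{2+\delta'}\le C\,(L\tau_m)^{1+\delta'/2}$. This would come from a Rosenthal-type bound under $\rho$-mixing, where the $k^{1/2}$ weight in the mixing condition is used, together with M\'oricz's maximal lemma. Applying Chebyshev along geometric blocks $n_r=\lfloor\theta^r\rfloor$ and Borel--Cantelli then yields a limsup bounded by $C\sqrt{\tau_m}$.

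\textbf{Step 3: conclusion.} Combining the two steps, $T_n-B(n)=o(a_n)$ a.s. in $\mathcal H$. Strassen's functional LIL for the Gaussian process $B$ in $\mathcal H$ (Kuelbs) shows that the cluster set of $B(n)/a_n$ is the unit ball of the reproducing kernel Hilbert space $\Sigma^{1/2}(\mathcal H)$. Its maximal norm is $\|\Sigma^{1/2}\|_{\mathrm{op}}=\|\Sigma\|_{\mathrm{op}}^{1/2}$, which gives both claims.

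\textbf{The total statement.} By Lemma~\ref{lem:parity} and the parity discussion, $T_n^{(\mathrm{tot})}$ has limiting covariance $\Sigma^{(\mathrm{size})}\oplus\Sigma^{(\mathrm{loc})}$ on $\mathcal H_{\mathrm{even}}\oplus\mathcal H_{\mathrm{odd}}$. Running Steps 1--3 on the total sequence, which (Mix) also covers since $\rho_{\max}$ includes $\star=\mathrm{tot}$, gives the limsup $\|\Sigma^{(\mathrm{size})}\oplus\Sigma^{(\mathrm{loc})}\|_{\mathrm{op}}^{1/2}$. The operator norm of a block-diagonal operator is the maximum of the two blocks' norms, which yields the final formula.
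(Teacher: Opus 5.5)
Your overall route, an $\mathcal H$-valued almost sure invariance principle followed by the Kuelbs--Strassen compact LIL for $\mathcal H$-valued Brownian motion, matches the paper's sketch. You build the invariance principle differently, from scalar approximations plus a separate tail estimate, and both of those steps fail as written.

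In Step~2, polynomial moments cannot reach the $\sqrt{\log\log n}$ scale. From $\E\max_{\ell\le L}\|\sum_{i\le \ell}(I-P_m)Z_i\|_{\mathcal H}^{p}\le C(L\tau_m)^{p/2}$ and Markov along $n_r=\lfloor\theta^r\rfloor$, the best you get is $\PP\big(\max_{\ell\le n_{r+1}}\|(I-P_m)T_\ell\|_{\mathcal H}>\lambda a_{n_r}\big)\le C_\theta(\tau_m/\lambda^2)^{p/2}(\log\log n_r)^{-p/2}\asymp(\log r)^{-p/2}$. This is not summable in $r$ for any fixed $p$ and $\lambda$, so Borel--Cantelli gives nothing. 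Upper bounds at the LIL scale need exponential-type tail estimates: truncation at a level of order $\sqrt{n/\log\log n}$ together with a Bernstein/Fuk--Nagaev-type inequality for $\mathcal H$-valued block sums. Alternatively, they must be inherited from a Gaussian process once a genuine coupling exists.

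In Step~1, Cram\'er--Wold identifies laws but produces no coupling. Separate almost sure approximations of each $\langle T_n,f\rangle_{\mathcal H}$ by its own Brownian motion cannot be glued into a Gaussian vector approximating $P_mT_n$. Skorokhod embedding is intrinsically one-dimensional, and an LIL such as Shao's is not an approximation at all. Still less can you obtain a single $\mathcal H$-valued $B$ serving every $m$, which is what Step~3 needs to conclude $T_n-B(n)=o(a_n)$. The paper's sketch avoids both problems by coupling the big-block sums directly in $\mathcal H$ (a Berkes--Philipp/Dehling--Philipp-type approximation) and showing the gaps are negligible, so tail control comes from the Gaussian limit. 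If you want to avoid an ASIP altogether, scalar LILs along a countable dense set of $f$ and along a top eigenvector of $\Sigma$ do give the norm limsup, once LIL-scale tail control is in hand. The cluster-set identity, however, still needs a vector-valued compact LIL for each $P_mT_n$ plus Kuelbs's finite-dimensional approximation criterion.

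The total-norm formula cannot be proved as stated, because it is false without an extra hypothesis, and your justification is exactly where it breaks. Lemma~\ref{lem:parity} only gives $\E\langle Z_i^{(\mathrm{size})},Z_j^{(\mathrm{loc})}\rangle_{\mathcal H}=0$; the inner product in fact vanishes pathwise, so only the \emph{trace} of the even--odd cross-covariance is zero. The cross operators $f\mapsto\sum_{k\in\mathbb Z}\E\big[\langle Z_k^{(\mathrm{loc})},f\rangle_{\mathcal H}\,Z_0^{(\mathrm{size})}\big]$ need not vanish, so $\Sigma^{(\mathrm{tot})}$ is in general not block-diagonal. For example, take $d=1$ and $X_i=[0,2R_i]$ with $R_i\ge0$ i.i.d., bounded, and $\Var R_0=s^2>0$; all hypotheses hold trivially. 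Then $W_{X_i}\equiv R_i$, $C_{X_i}(u)=uR_i$, and $T_n^{(\mathrm{tot})}=\big(\sum_{i\le n}(R_i-\E R_0)\big)(1+u)$ with $\|1+u\|_{\mathcal H}^2=2$. Hence $\limsup_n\|T_n^{(\mathrm{tot})}\|_{\mathcal H}/a_n=\sqrt2\,s$, whereas $\|\Sigma^{(\mathrm{size})}\|_{\mathrm{op}}=\|\Sigma^{(\mathrm{loc})}\|_{\mathrm{op}}=s^2$ and the claimed value is $s$. Running your Steps~1--3 on $Z^{(\mathrm{tot})}$ would correctly give $\|\Sigma^{(\mathrm{tot})}\|_{\mathrm{op}}^{1/2}$. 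Identifying this with the maximum requires the additional assumption that all even--odd cross long-run covariances vanish. The paper's sketch makes the same unjustified step: the identity $\|T_n^{(\mathrm{tot})}\|_{\mathcal H}^2=\|T_n^{(\mathrm{size})}\|_{\mathcal H}^2+\|T_n^{(\mathrm{loc})}\|_{\mathcal H}^2$ does not yield a maximum. So this is a defect of the statement itself.

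Finally, Shao-type LILs, Rosenthal inequalities and any blocking or coupling argument require classical $\rho$-mixing between past and future $\sigma$-fields. By contrast, $\rho_{\max}(k)$ only involves the pairs $(Z_0(u),Z_k(u))$ at one direction and two single times, and it does not control $K_k(u,v)$ for $u\neq v$ (compare Remark~\ref{rem:compat-rhomax-op}). You should work under a condition on $\overline\rho$ as in (Mix-op); the paper's sketch glosses over the same point.
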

\begin{proof}
	\emph{(Sketch via ASIP and transfer of the Brownian LIL).}
	We outline the standard route: establish an almost sure invariance principle (ASIP)
	for the $\H$--valued partial sums, then transfer the classical Hilbert--space Brownian LIL/Strassen compact LIL.
	
	\smallskip
	\noindent
	\emph{Step 1: Blocking decomposition.}
	Choose increasing integers $(b_j)_{j\ge1}$ (big--block lengths) and $(s_j)_{j\ge1}$ (gaps) such that
	$b_j\to\infty$, $s_j\to\infty$, $s_j/b_j\to0$, and
	\[
	\sum_{j\ge1}\frac{s_j}{\sqrt{\sum_{i\le j} b_i}}\ <\ \infty.
	\]
	Let $B_j$ be the $j$th big block and $G_j$ the $j$th gap block.
	Write
	\[
	T_n^{(\star)} \ =\ \sum_{j:\,B_j\subset\{1,\dots,n\}} \sum_{i\in B_j} Z_i^{(\star)}
	\ +\ \sum_{j:\,G_j\subset\{1,\dots,n\}} \sum_{i\in G_j} Z_i^{(\star)}
	\ +\ R_n,
	\]
	where $R_n$ is the boundary remainder.
	
	\smallskip
	\noindent
	\emph{Step 2: Gaps are negligible at the LIL scale.}
	Using \textnormal{(M$2+$)} and the summability
	$\sum_{k\ge1}k^{1/2}\rho_{\max}(k)^{\delta/(2+\delta)}<\infty$,
	one applies a Davydov/Rio--type covariance bound for $\rho$--mixing to obtain moment estimates for
	gap partial sums, and then a Borel--Cantelli argument yields
	\[
	\sum_{j\ge1}\Big\|\sum_{i\in G_j} Z_i^{(\star)}\Big\|_{\H}
	\ =\ o\!\big(a_n\big)\qquad\text{a.s.},
	\]
	and similarly $R_n=o(a_n)$ almost surely.
	
	\smallskip
	\noindent
	\emph{Step 3: ASIP for big blocks (Gaussian coupling).}
	For the big--block sums, one constructs on an extension of the probability space an $\H$--valued Brownian motion
	$\mathbb B^{(\star)}$ with covariance operator $\Sigma^{(\star)}$ (guaranteed by \textnormal{(LRV)}),
	and couples it with the sequence of big--block partial sums (Philipp--Stout/Skorokhod-type coupling) so that
	\[
	\sup_{m\le n}
	\Bigg\|
	\sum_{j\le m}\sum_{i\in B_j} Z_i^{(\star)}
	-\mathbb B^{(\star)}\!\Big(\Var\!\Big(\sum_{j\le m}\sum_{i\in B_j} Z_i^{(\star)}\Big)\Big)
	\Bigg\|_{\H}
	\ =\ o\!\big(a_n\big)
	\qquad\text{a.s.}
	\]
	Combining with Step~2 yields the ASIP
	\[
	T_n^{(\star)} \ =\ \mathbb B^{(\star)}(n)\ +\ o\!\big(a_n\big)\qquad\text{a.s.}
	\]
	
	\smallskip
	\noindent
	\emph{Step 4: Transfer of the LIL and cluster set.}
	The Hilbert--space LIL for Brownian motion gives
	\[
	\limsup_{t\to\infty}\frac{\|\mathbb B^{(\star)}(t)\|_{\H}}{\sqrt{2t\log\log t}}
	\ =\ \|\Sigma^{(\star)}\|_{\op}^{1/2},
	\]
	and Strassen's compact LIL identifies the cluster set as the closed unit ball of
	$\Sigma^{(\star)1/2}(\H)$.
	Since the ASIP error is $o(a_n)$, the same limsup and the same cluster set hold for $T_n^{(\star)}$.
	
	\smallskip
	\noindent
	\emph{Total component.}
	Using the orthogonal decomposition $\H=\H_{\even}\oplus \H_{\odd}$ with
	$T_n^{(\mathrm{size})}\in\H_{\even}$ and $T_n^{(\mathrm{loc})}\in\H_{\odd}$,
	we have
	\[
	\|T_n^{(\mathrm{tot})}\|_{\H}^2
	=\|T_n^{(\mathrm{size})}\|_{\H}^2+\|T_n^{(\mathrm{loc})}\|_{\H}^2.
	\]
	Hence the LIL constant for the total norm is the maximum of the two radii, i.e.
	\[
	\limsup_{n\to\infty}\frac{\|T_n^{(\mathrm{tot})}\|_{\H}}{a_n}
	=\max\!\Big\{\|\Sigma^{(\mathrm{size})}\|_{\op}^{1/2},\ \|\Sigma^{(\mathrm{loc})}\|_{\op}^{1/2}\Big\}.
	\]
\end{proof}

\subsubsection*{Remarks and corollaries}

\begin{itemize}
	\item[\textbullet] \textbf{Scalar projections.}
	For any fixed $h\in \mathcal H$, the real--valued sums satisfy
	$\frac{1}{\sqrt n}\sum_{i=1}^n\langle Z_i^{(\star)},h\rangle\Rightarrow \mathcal N\big(0,\sigma_\star(h)^2\big)$ with
	$\sigma_\star(h)^2=\langle \Sigma^{(\star)}h,h\rangle$; LIL and FCLT follow similarly.
	
	\item[\textbullet] \textbf{Checkable (LRV).}
	By Lemma~\ref{lem:lrv}, \textnormal{(M$2+$)}+\textnormal{(Mix)} imply $\Sigma^{(\star)}$ is trace--class.
	If desired, the stronger $\sum_k\rho_{\max}(k)<\infty$ can be assumed for simpler verification.
	
	\item[\textbullet] \textbf{Parity decoupling.}
	All limit objects are block--diagonal on ${\mathcal H}_{\mathrm{even}}\oplus \mathcal{H}_{\mathrm{odd}}$ by Lemma~\ref{lem:parity};
	hence ``total'' limits decompose as sums of orthogonal Gaussian components supported on the two orthogonal subspaces.
	
	\item[\textbullet] \textbf{Ergodic/LLN context.}
	The integrated Chebyshev bounds and WLLN/SLLN in $\mathcal H$ established earlier are repeatedly used in Lemma~\ref{lem:increments}
	and in the tightness arguments (we only recalled the needed ingredients).
\end{itemize}

\section{Estimation and simulation study}
\label{sec:simulation}

This section illustrates, by simulation, how the proposed even--odd
support--function decomposition separates \emph{size} and \emph{location}
dependence, and how it improves upon Steiner-point--based correlation
indices.
The focus is on conceptual validation through carefully designed
dependence scenarios rather than algorithmic optimization.

\subsection{Simulation model}

All simulations are conducted in $\mathbb{R}^2$.
Each random set $X_i$ is a \emph{non--centrally symmetric} convex polygon,
specifically an asymmetric triangle.
This choice is intentional: for such shapes, the odd component of the
support function contains directional information beyond what is captured
by a single point-valued summary such as the Steiner point.

Let $V_i=\{v_{i1},v_{i2},v_{i3}\}\subset\mathbb{R}^2$ denote the vertices of
$X_i$.
The support function is
\[
h_{X_i}(u)=\max_{k=1,2,3}\langle u,v_{ik}\rangle,
\qquad u\in\mathbb{S}^1.
\]
Its even and odd components are
\[
W_{X_i}(u)=\tfrac12\big(h_{X_i}(u)+h_{X_i}(-u)\big),\qquad
C_{X_i}(u)=\tfrac12\big(h_{X_i}(u)-h_{X_i}(-u)\big).
\]

The triangles evolve over time through weakly stationary AR(1) dynamics in:
(i) translation (center),
(ii) shape parameters (side lengths and skewness),
and (iii) orientation.
All innovation sequences are centered, independent across components, and
have finite second moments.
This ensures weak stationarity and mixing conditions compatible with the
theory developed earlier.

In addition to the raw odd component $C_{X_i}$, we also consider its
\emph{Steiner--centered (residual) version}
\[
C_{X_i}^{\mathrm{res}}(u)
=
C_{X_i}(u)-\langle s(X_i),u\rangle,
\]
where $s(X_i)$ denotes the Steiner point.
As discussed in Section~\ref{subsec:steiner}, this residual removes the
linear part of the odd component induced by the Steiner point and isolates
directional location fluctuations.

\subsection{Dependence scenarios}

We consider four scenarios, denoted S1--S4, designed to jointly highlight
(i) the ability of the even--odd decomposition to disentangle size and
location dependence, and
(ii) the limitations of Steiner-point--based correlation indices.

\paragraph{S1: Sign flip}
Let
\[
Y_i=-X_i .
\]
Then the size components coincide while the location components reverse
sign. Consequently,
\[
\Corr_{\mathrm{size}}(X,Y)\approx 1,\qquad
\Corr_{\mathrm{loc}}^{\mathrm{res}}(X,Y)\approx -1,
\]
and the Steiner-point correlation is close to $-1$.
This scenario serves as a baseline sanity check.

\paragraph{S2: Partial location dependence}
Let $Y_i$ share the same translation process as $X_i$, while its shape and orientation evolve independently.
In this case, the Steiner points coincide, so the Steiner-point correlation is close to one.
However, the residual location correlation
$\Corr_{\mathrm{loc}}^{\mathrm{res}}(X,Y)$ is substantially smaller, reflecting the fact that only the linear (point-valued) part of the odd component is shared, whereas higher-order directional fluctuations remain largely independent.

\paragraph{S3: Size dependence with directional asymmetry}
Let $Y_i$ share the same shape and orientation as $X_i$, while its translation process is independent.
As expected, the Steiner-point correlation is close to zero. Nevertheless, the residual location correlation $\Corr_{\mathrm{loc}}^{\mathrm{res}}(X,Y)$ is clearly positive.
This is due to the use of asymmetric shapes: although the centers are independent, shared shape dynamics induce coherent directional fluctuations in the odd component that are invisible to the Steiner point.

\paragraph{S4: Steiner--invisible location dependence (sensitivity analysis)}
This scenario is designed as a \emph{continuous analogue of S3}.
Let $Z_i$ be an independent copy of $X_i$ (with identical marginal
geometry but independent dynamics), and define
\[
Y_i=-\alpha X_i+(1-\alpha)Z_i,\qquad \alpha\in[0,1].
\]
By linearity of the Steiner point,
\[
s(Y_i)=-\alpha s(X_i)+(1-\alpha)s(Z_i),
\]
so that $s(Y_i)$ is independent of $s(X_i)$ for all $\alpha$.
Hence the Steiner-point correlation remains close to zero for all
$\alpha$.

In contrast, the residual odd component satisfies
\[
C_{Y_i}^{\mathrm{res}}
=
-\alpha C_{X_i}^{\mathrm{res}}+(1-\alpha)C_{Z_i}^{\mathrm{res}},
\]
so that $\Corr_{\mathrm{loc}}^{\mathrm{res}}(X,Y)$ varies continuously with
$\alpha$.
This scenario isolates a form of directional location dependence that is
completely invisible to Steiner-point--based summaries.

\subsection{Estimation and implementation}

For each scenario we generate a time series of length $n$ and sample
$M$ directions uniformly on $\mathbb{S}^1$.
Direction-wise covariances are computed with Bessel correction and
integrated by Monte--Carlo averaging.
We report
\[
\widehat{\Corr}_{\mathrm{size}},\qquad
\widehat{\Corr}_{\mathrm{loc}}^{\mathrm{res}},
\]
together with the Steiner-point correlation for comparison.

The asymptotic properties of these estimators under weak dependence follow
from the Hilbert--space limit theorems developed earlier and are summarized
in \ref{app:Consistency_simulation}.
Here we focus on qualitative behavior.

\subsection{Results}

Figure~\ref{fig:sim_S1S3_triangles} reports the estimated correlations for
scenarios S1--S3 based on asymmetric triangles.
Scenario~S1 serves as a sanity check: the sign flip $Y=-X$ yields
$\Corr_{\mathrm{size}}\approx 1$ and a negative residual location
correlation, which is also captured by the Steiner-point correlation.

\begin{figure}[ht]
	\centering
	\includegraphics[width=0.9\linewidth]{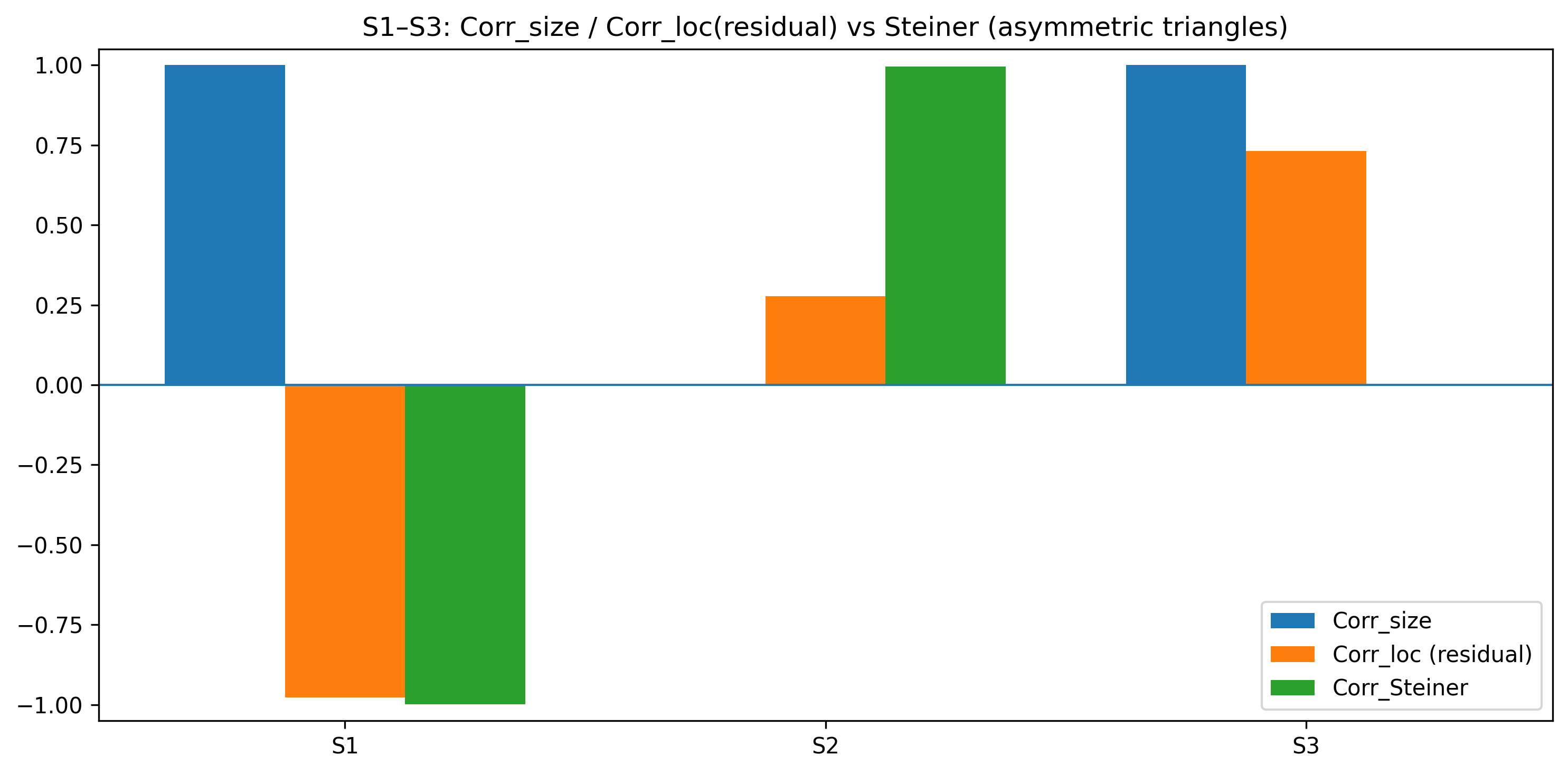}
	\caption{Scenarios S1--S3 with asymmetric triangles.
		Residual location correlation $\Corr_{\mathrm{loc}}^{\mathrm{res}}$
		separates clearly from the Steiner-point correlation, particularly in
		Scenario~S3 where Steiner fails to detect strong size dependence.}
	\label{fig:sim_S1S3_triangles}
\end{figure}

Scenarios~S2 and S3 reveal the essential difference between
Steiner-based summaries and the proposed even--odd decomposition.
In Scenario~S2, the sets share the same center but have independent shape
dynamics.
Accordingly, the Steiner-point correlation is close to one, while the
residual location correlation is substantially smaller, indicating that
most of the apparent dependence is explained by the common translation
and not by directional location effects.
In contrast, Scenario~S3 exhibits strong size dependence together with
pronounced residual location dependence, even though the Steiner-point
correlation is close to zero.
This shows that directional location dependence may persist and remain
detectable in the odd support-function profile, despite being completely
invisible to any point-valued summary such as the Steiner point.

Figure~\ref{fig:sim_S4_triangles} displays Scenario~S4, which interpolates
between independence and perfect size dependence while keeping the
Steiner point uninformative.

\begin{figure}[ht]
	\centering
	\includegraphics[width=0.9\linewidth]{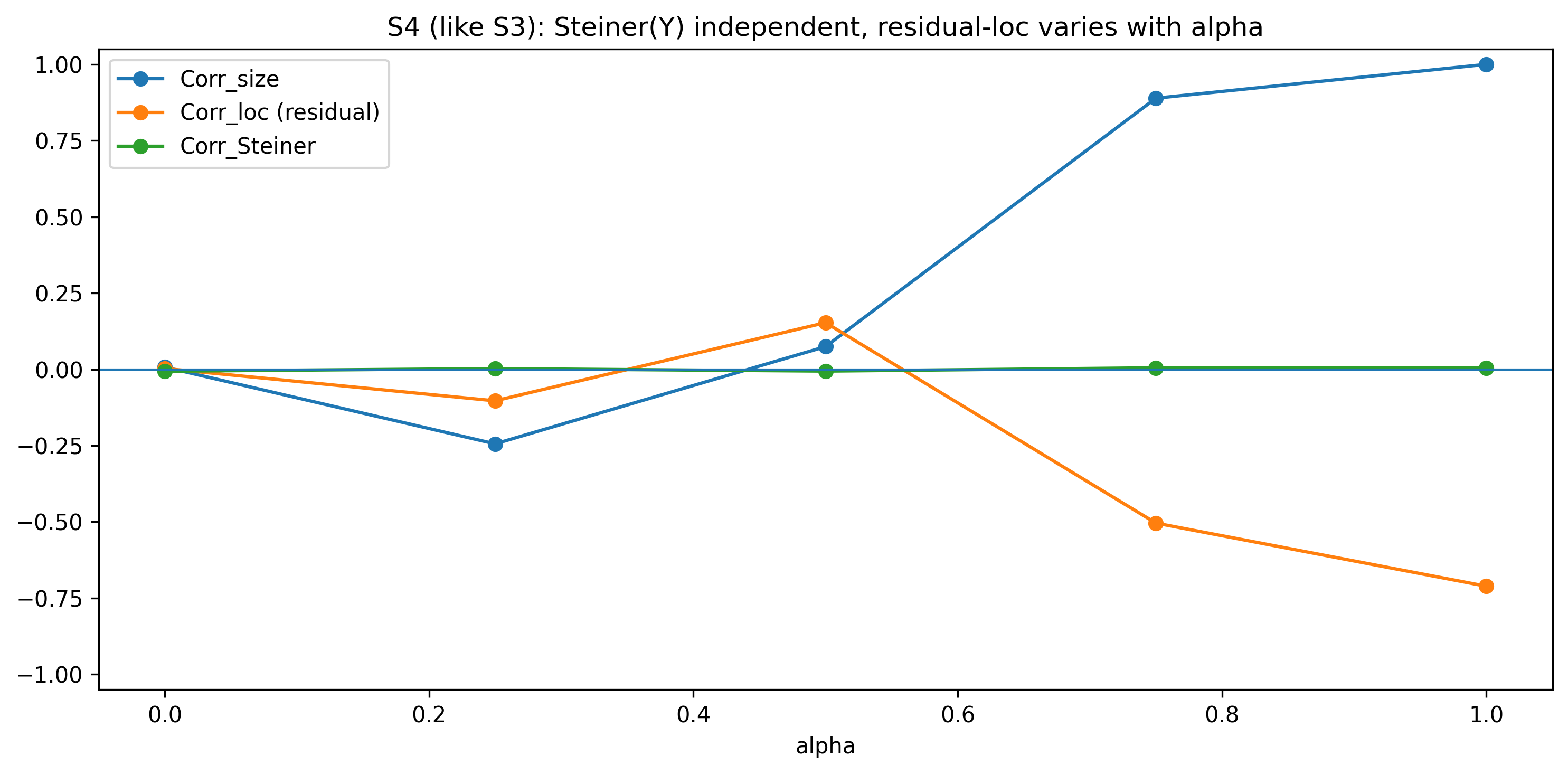}
	\caption{Scenario S4 (sensitivity analysis).
		The Steiner-point correlation remains close to zero for all $\alpha$,
		while the residual location correlation varies continuously, revealing
		directional location dependence invisible to Steiner-based summaries.}
	\label{fig:sim_S4_triangles}
\end{figure}

Across all values of $\alpha$, the Steiner-point correlation remains close
to zero.
In contrast, both the size correlation and the residual location
correlation vary markedly with $\alpha$, and do so in qualitatively
different ways.
This confirms that the proposed residual location index captures
directional dependence that cannot be reduced to, nor recovered from,
Steiner-point centering.

For completeness, Table~\ref{tab:sim-summary} reports mean and standard
deviation over repeated replications; the qualitative conclusions are
unchanged.

\begin{table}[ht]
	\centering
	\caption{Monte--Carlo mean (standard deviation) of correlation estimates over $R=200$ replications.}
	\label{tab:sim-summary}
	\begin{tabular}{c c c c}
		\hline
		Scenario & Corr$_{\mathrm{size}}$ & Corr$_{\mathrm{loc(res)}}$ & Corr$_{\mathrm{Steiner}}$ \\
		\hline
		S1 & $1.00\,(0.00)$ & $-0.98\,(0.02)$ & $-1.00\,(0.00)$ \\
		S2 & $0.01\,(0.05)$ & $0.28\,(0.07)$ & $0.99\,(0.01)$ \\
		S3 & $1.00\,(0.00)$ & $0.73\,(0.06)$ & $0.00\,(0.04)$ \\
		\hline
	\end{tabular}
\end{table}

\section{Applications}
\label{sec:appls}

This section illustrates how the proposed even--odd support--function
framework connects with existing methodologies and enables new modeling
perspectives for dependent set-valued data.
The emphasis is conceptual rather than algorithmic: each application
highlights how separating size and location components yields clearer
interpretation and cleaner asymptotic theory.
Additional technical details are deliberately kept to a minimum.

\subsection{Interval-valued regression revisited via support losses}
\label{sec:interval-reg}

Regression models for interval-valued responses are classically based on
a midpoint--radius decomposition, where the center and the range are
modeled separately.
This paradigm originates in symbolic data analysis
\cite{billard2007symbolic} and leads to the centre--range regression model
\cite{neto2008centre,neto2010constrained}.

Let $Y_i=[C_i-R_i,\ C_i+R_i]$ be an interval-valued response and
$x_i\in\mathbb R^p$ covariates.
Consider predictors of the form
\[
\widehat Y_i=[x_i^\top\beta - r_i,\ x_i^\top\beta + r_i],
\qquad r_i:=|x_i^\top\gamma|,
\]
or more generally $r_i$ given by a nonnegative link function.
Define the support-based quadratic loss
\[
\mathcal L_n(\beta,\gamma)
:=\frac{1}{n}\sum_{i=1}^n
\int_{\mathbb S^0}\!
\big(h_{Y_i}(u)-h_{\widehat Y_i}(u)\big)^2\,d\sigma(u),
\quad
\mathbb S^0=\{\pm1\},
\quad
d\sigma=\tfrac12(\delta_{+1}+\delta_{-1}).
\]
This loss is the one-dimensional specialization of the general
$L^2(\sigma)$ support risk studied in this paper.

\begin{proposition}[Midpoint--radius regression as an exact optimizer]
	\label{prop:interval-reg}
	For interval responses,
	$h_{Y_i}(u)=u\,C_i+R_i$ and
	$h_{\widehat Y_i}(u)=u\,x_i^\top\beta+r_i$.
	Consequently,
	\[
	\mathcal L_n(\beta,\gamma)
	=\frac{1}{n}\sum_{i=1}^n
	\Big[(C_i-x_i^\top\beta)^2+(R_i-|x_i^\top\gamma|)^2\Big].
	\]
	If $r_i=x_i^\top\gamma$ with $x_i^\top\gamma\ge0$, minimization of
	$\mathcal L_n$ decouples into two least-squares problems for the midpoint
	and the radius.
\end{proposition}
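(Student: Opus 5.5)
The plan is to compute both support functions on $\mathbb S^0=\{-1,+1\}$ directly and then expand the loss at the two points $u=\pm1$. For an interval $[a,b]$ we have $h(u)=\sup_{x\in[a,b]}ux$, which is $b$ at $u=+1$ and $-a$ at $u=-1$. Both values are captured by the single formula $h(u)=u\frac{a+b}{2}+\frac{b-a}{2}$. With $a=C_i-R_i$ and $b=C_i+R_i$ this gives $h_{Y_i}(u)=uC_i+R_i$. For $\widehat Y_i$ the midpoint is $x_i^\top\beta$ and the half-width is $r_i=|x_i^\top\gamma|\ge0$, so $\widehat Y_i$ is a genuine interval and $h_{\widehat Y_i}(u)=u\,x_i^\top\beta+r_i$.

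Next, set $a_i:=C_i-x_i^\top\beta$ and $b_i:=R_i-r_i$. The difference of support functions is then $ua_i+b_i$, which is exactly its odd part (location) plus its even part (size). Averaging its square over $\sigma=\tfrac12(\delta_{+1}+\delta_{-1})$ gives
\[
\tfrac12\big[(a_i+b_i)^2+(-a_i+b_i)^2\big]=a_i^2+b_i^2 .
\]
The cross term $2a_ib_i\cdot\tfrac12(1+(-1))$ vanishes. This is simply the one-dimensional instance of the even--odd orthogonality in Proposition~\ref{prop:orth-add}, since $u\mapsto u$ is odd and constants are even. Summing over $i$ and dividing by $n$ yields the stated formula for $\mathcal L_n(\beta,\gamma)$.

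For the decoupling claim, restrict to $\gamma$ with $x_i^\top\gamma\ge0$ for all $i$. On this set $|x_i^\top\gamma|=x_i^\top\gamma$, and the loss becomes $F(\beta)+G(\gamma)$ with
\[
F(\beta)=\frac1n\sum_i(C_i-x_i^\top\beta)^2,\qquad G(\gamma)=\frac1n\sum_i(R_i-x_i^\top\gamma)^2 .
\]
The feasible set is a product of all of $\mathbb R^p$ for $\beta$ and the cone $\{\gamma:x_i^\top\gamma\ge0\ \forall i\}$ for $\gamma$. A sum of two functions of separate variables over a product set is minimized by minimizing each factor separately. The first problem is therefore ordinary least squares of the midpoints on $x_i$. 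The second is least squares of the radii on $x_i$, constrained to the cone. The only subtle point is this constraint: it is what keeps $\widehat Y_i$ a valid interval and lets the absolute value be dropped. If the unconstrained radius least-squares solution already satisfies the constraint, it is the exact optimizer.
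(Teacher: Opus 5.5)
Your proposal is correct and follows the same route as the paper. Both arguments write each support function on $\mathbb S^0$ as an even constant (the radius) plus an odd term $u\cdot(\text{midpoint})$. Both then use even--odd orthogonality under $\sigma=\tfrac12(\delta_{+1}+\delta_{-1})$ to remove the cross term. You carry out explicitly what the paper leaves implicit: the values $h(\pm1)$, the identity $\tfrac12\big[(a_i+b_i)^2+(b_i-a_i)^2\big]=a_i^2+b_i^2$, and the product structure of the feasible set. You also correctly note that the radius problem is least squares restricted to the cone $\{\gamma: x_i^\top\gamma\ge 0\ \forall i\}$.
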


\begin{proof}
	On $\mathbb S^0$, the support function decomposes as
	$h_Y(u)=W_Y+u\,C_Y$ with $W_Y=R$ and $C_Y=C$.
	The $L^2(\sigma)$ loss therefore splits orthogonally into even and odd
	parts, yielding the stated decomposition.
\end{proof}

Proposition~\ref{prop:interval-reg} shows that the classical
midpoint--radius regression is the \emph{exact} minimizer of a natural
support-based risk.
In one dimension, the odd component coincides with the linear
(Steiner-induced) term, so that no residual location component remains.
This explains why midpoint--radius regression fully captures location
effects for intervals, while higher-dimensional random sets require the
richer even--odd framework developed here.
The extension to weakly dependent data follows directly from the
Hilbert-space limit theory established earlier.

\subsection{Robust linear programming with interval uncertainty}

Robust linear programming with interval coefficients is a classical topic
in optimization; see, e.g., Ben-Tal et al.~\cite{ben2002robust} and
Schrijver~\cite{Schrijver1998}.
From the random-set viewpoint, interval uncertainty corresponds to convex
set-valued coefficients whose geometry is naturally described by support
functions~\cite{Molchanov2005}.

Consider the linear program
\[
\min \{ c^\top x : Ax \le b,\ x \in \mathbb R^p \},
\]
where each coefficient row $(a_j,b_j)$ is interval-valued.
For a given decision vector $x$, define the residual interval
\[
R_j(x)
:= \{ a_j^\top x - b_j : a_j\in A_j,\ b_j\in B_j \}
=[C_j(x)-S_j(x),\, C_j(x)+S_j(x)].
\]
Here $C_j(x)$ represents the systematic shift (location), while $S_j(x)$
quantifies uncertainty inflation (size).

The classical robust constraint is equivalent to $C_j(x)+S_j(x)\le0$.
This recovers the standard interval-robust reformulation, but the
support-function representation highlights an explicit separation
between location and size effects.
Unlike point-valued summaries such as the Steiner point, the support
function preserves directional information and remains informative when
uncertainties are dependent or non-axis-aligned.
Such dependence-aware formulations fit naturally within the random-set
framework and extend beyond independent interval bounds.

\subsection{An open problem: interventions from dependent set-valued signals}

We conclude with an open-style example illustrating structural advantages
of the even--odd decomposition.
Suppose that at each time $t$ one observes a convex set-valued signal
$X_t\subset\mathbb R^d$, representing, for instance, a spatial risk or
uncertainty region.
The process $(X_t)$ is weakly dependent and described through its support
function.

At time $t$, a decision-maker selects a convex intervention set $U_t$
subject to a size budget, aiming to minimize a quadratic support loss at
time $t+1$,
\[
\mathcal R_t(U_t)
:= \mathbb E\| h_{X_{t+1}}-h_{U_t}\|_{2,\sigma}^2,
\qquad
\int_{\mathbb S^{d-1}} W_{U_t}(u)\,d\sigma(u)\le B.
\]
By the even--odd decomposition,
\[
\mathcal R_t(U_t)
=
\mathbb E\|C_{X_{t+1}}-C_{U_t}\|_{2,\sigma}^2
+
\mathbb E\|W_{X_{t+1}}-W_{U_t}\|_{2,\sigma}^2,
\]
so that location tracking and size allocation decouple at the level of
the objective.
This separation is obscured by scalar summaries such as Aumann
expectations or Steiner points, which mix the two effects.

We do not pursue a full solution here.
The purpose of this example is to illustrate how the proposed framework
naturally yields interpretable subproblems and admits dependence-aware
uncertainty quantification via the long-run covariance operator developed
earlier.
Developing complete stochastic control or optimization schemes in this
setting remains an open problem.

\medskip
\noindent\textbf{Link with simulations.}
The simulation study in Section~\ref{sec:simulation} complements these
applications by demonstrating that the residual location component
captures directional dependence effects that are invisible to
Steiner-point--based summaries, even in simple geometric settings.

\section{Conclusion and Future Work}

This paper develops a unified framework for dependence and asymptotic
analysis of set-valued stochastic processes by exploiting the even--odd
(size--location) decomposition of support functions.
By introducing componentwise covariance and correlation structures,
together with a compatible maximal correlation coefficient $\rho_{\max}$,
we obtain a dependence theory that remains informative under weak
dependence, avoids selection-based degeneracies, and is intrinsically
adapted to the geometry of random sets.
Within this framework, we establish a full range of classical limit
theorems, including laws of large numbers, Hilbert-space central and
functional central limit theorems, and laws of the iterated logarithm for
$\rho_{\max}$-mixing set-valued processes.

A key conceptual outcome is the strict separation between size-driven
uncertainty and location-driven drift.
This resolves structural limitations of existing approaches, where these
two effects are inevitably conflated.
In particular, the proposed framework remains non-degenerate when the
size component vanishes, and it reveals directional dependence phenomena
that are invisible to scalar-valued mixing coefficients and
Steiner-point--based summaries.

\medskip
\noindent\textbf{Future directions.}
The present work opens several avenues for further research.

\begin{itemize}
	\item \textbf{Refined dependence notions.}
	The existence of nontrivial dependence at the level of support
	functions, despite vanishing scalar correlations of all one-dimensional
	projections, suggests new classes of weak dependence for random sets.
	Developing other corr-based mixing coefficients and identifying optimal
	mixing conditions under which the same limit theorems hold remain open
	problems.
	
	\item \textbf{Geometry-driven limit theory.}
	The block-diagonal structure of long-run covariance operators induced
	by the even--odd decomposition points to deeper interactions between
	dependence and geometry.
	Systematic studies of geometric inequalities, affine invariance, and
	concentration phenomena for set-valued covariance and correlation are
	promising directions.
	
	\item \textbf{Statistical inference.}
	The separation of size and location effects naturally leads to new
	inferential questions, including testing for directional persistence
	versus pure amplitude variability and constructing dependence-robust
	estimators of long-run covariance operators for random sets.
	
	\item \textbf{Optimization and data-driven extensions.}
	The framework provides a principled foundation for optimization and
	decision problems driven by dependent set-valued signals, as well as a
	natural interface with high-dimensional and learning-based settings.
	Developing dependence-aware optimization and control methodologies for
	random sets remains largely unexplored.
\end{itemize}

Overall, the results indicate that dependence for set-valued processes is
not merely a technical extension of scalar theory, but a source of
genuinely new probabilistic structure.
By reconciling geometry, dependence, and asymptotics within a single
framework, this work lays the groundwork for a systematic theory of
weakly dependent random sets.

\section*{Acknowledgements}

The author gratefully acknowledges helpful discussions with colleagues on set-valued stochastic processes and weak dependence.
The author also thanks the anonymous referees for their careful reading and constructive comments, which helped improve the clarity and scope of the paper.

\section*{Data Availability Statement}

No data were generated or analyzed in this study.

\section*{Code Availability}

The simulation code supporting the findings of this study is available from the author upon reasonable request.

\section*{Use of Artificial Intelligence}

Generative artificial intelligence tools were used for language editing and formatting assistance.
The scientific content, mathematical results, and conclusions were produced solely by the author.

\section*{Author Contributions}

The author solely developed the theory, proofs, simulations, and writing of this manuscript.

\section*{Funding}

This research received no external funding.
\section*{Competing Interests}
The author declares that there are no competing interests.

\newpage
\appendix
\section{Consistency and convergence rates of simulation-based estimators}
\label{app:Consistency_simulation}

This appendix collects standard consistency and convergence results for the
Monte--Carlo plug-in estimators used in Section~\ref{sec:simulation}. These results are
included for completeness. They rely on classical laws of large numbers and
moment bounds for weakly dependent processes and do not constitute independent
contributions beyond the $L^2(\sigma)$ framework developed in the main text.

\subsection{Setting and assumptions}

Let $(X_i,Y_i)_{i\in\mathbb Z}$ be a weakly stationary sequence of convex compact
random sets in $\mathbb R^d$. For $\star\in\{\mathrm{size},\mathrm{loc}\}$, recall
\[
\Phi_X^{(\mathrm{size})}(u)=W_X(u),\qquad
\Phi_X^{(\mathrm{loc})}(u)=C_X(u),
\]
and define analogously $\Phi_Y^{(\star)}$.

For $u\in\mathbb S^{d-1}$, set
\[
g_\star(u)
:=\Cov\!\big(\Phi_X^{(\star)}(u),\Phi_Y^{(\star)}(u)\big),\qquad
\Cov_\star(X,Y)=\int_{\mathbb S^{d-1}} g_\star(u)\,d\sigma(u).
\]

We work under the following assumptions.

\begin{itemize}
	\item[(A1)] \emph{Integrated second moments:}
	\[
	\int_{\mathbb S^{d-1}}\E\,h_{X_0}(u)^2\,d\sigma(u)<\infty,
	\qquad
	\int_{\mathbb S^{d-1}}\E\,h_{Y_0}(u)^2\,d\sigma(u)<\infty.
	\]
	
	\item[(A2)] \emph{Weak dependence:}
	The sequence $(X_i,Y_i)$ satisfies a weak-dependence condition ensuring LLN
	and CLT for second-order statistics, e.g.\ $\rho$-mixing with
	$\sum_{k\ge1}\rho(k)<\infty$.
	
	\item[(A3)] \emph{Random directions:}
	$u_1,\dots,u_M\stackrel{\text{i.i.d.}}{\sim}\mathrm{Unif}(\mathbb S^{d-1})$,
	independent of the data.
\end{itemize}

\subsection{Per-direction covariance estimation}

For a fixed direction $u\in\mathbb S^{d-1}$, define
\[
\widehat{\Cov}_n^{(\star)}(u)
:=\frac{1}{n-1}\sum_{i=1}^n
\big(\Phi_{X_i}^{(\star)}(u)-\overline\Phi_n^{(\star)}(u)\big)
\big(\Phi_{Y_i}^{(\star)}(u)-\overline\Psi_n^{(\star)}(u)\big),
\]
where $\overline\Phi_n^{(\star)}(u)=n^{-1}\sum_i\Phi_{X_i}^{(\star)}(u)$ and
$\overline\Psi_n^{(\star)}(u)$ is defined similarly.

\begin{lemma}[Per-direction mean-square error]
	\label{lem:per-u-app}
	Assume \textnormal{(A1)--(A3)}. Suppose in addition that one of the following holds:
	\begin{itemize}
		\item[(A1$^+$)] $\displaystyle
		\int_{\mathbb S^{d-1}}\E\,h_{X_0}(u)^4\,d\sigma(u)<\infty$ and similarly for $Y_0$;
		
		\item[(A1$^{++}$)] $\displaystyle
		\int_{\mathbb S^{d-1}}\E\,h_{X_0}(u)^{2+\delta}\,d\sigma(u)<\infty$ for some
		$\delta>0$, together with a Davydov--Rio type inequality for the dependence.
	\end{itemize}
	Then, for $\star\in\{\mathrm{size},\mathrm{loc}\}$,
	\[
	\int_{\mathbb S^{d-1}}
	\E\big(\widehat{\Cov}_n^{(\star)}(u)-g_\star(u)\big)^2\,d\sigma(u)
	\ \le\ \frac{C_\star}{n},
	\]
	for some finite constant $C_\star$.
\end{lemma}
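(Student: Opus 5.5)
Fix $\star$ and a direction $u$, and write $A_i:=\Phi^{(\star)}_{X_i}(u)-\E\Phi^{(\star)}_{X_0}(u)$ and $B_i:=\Phi^{(\star)}_{Y_i}(u)-\E\Phi^{(\star)}_{Y_0}(u)$. By weak stationarity, $\E A_iB_i=g_\star(u)$. The plan is the usual bias--variance split of the sample covariance, carried out pointwise in $u$ and then integrated over $\mathbb S^{d-1}$ via Fubini--Tonelli, as in Theorem~\ref{thm:ChebyshevIntegrated}. Algebra gives
\[
\widehat{\Cov}_n^{(\star)}(u)=\frac{n}{n-1}\Big(\frac1n\sum_{i=1}^n A_iB_i-\bar A_n\bar B_n\Big),
\]
so that
\[
\widehat{\Cov}_n^{(\star)}(u)-g_\star(u)=\frac{n}{n-1}\Big(\frac1n\sum_{i=1}^n(A_iB_i-g_\star(u))\Big)-\frac{n}{n-1}\bar A_n\bar B_n+\frac{g_\star(u)}{n-1}.
\]
Using $(a+b+c)^2\le3(a^2+b^2+c^2)$, it suffices to bound the integrated second moments of the three terms by $O(1/n)$.

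\textbf{Third term (deterministic bias).} Its contribution is $\int g_\star^2\,d\sigma/(n-1)^2$. By Cauchy--Schwarz, $g_\star(u)^2\le\Var A_0\,\Var B_0$, so this term is $O(n^{-2})$ under (A1$^+$), or under (A1) whenever $\int \Var A_0\Var B_0\,d\sigma<\infty$, which again follows from fourth moments.

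\textbf{Second term (product of means).} Using $\E(\bar A_n\bar B_n)^2\le(\E\bar A_n^4)^{1/2}(\E\bar B_n^4)^{1/2}$, I need fourth-moment bounds $\E\bar A_n^4=O(n^{-2})$ for $\rho$-mixing averages. These are the standard Rosenthal-type bounds under $\sum\rho(k)<\infty$ (Peligrad/Shao), which hold pointwise in $u$ with a constant proportional to $\E A_0^4$. This yields a contribution $O(n^{-2})$ after integrating, by (A1$^+$) and Cauchy--Schwarz in $u$. A cruder route would be Chebyshev plus boundedness, but the moment route is cleaner.

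\textbf{First term (main obstacle).} The sequence $\xi_i:=A_iB_i-g_\star(u)$ is a centered, measurable function of $(X_i,Y_i)$. Consequently its $\rho$-mixing coefficients are bounded by those of the joint sequence in (A2), since HGR correlation only decreases under measurable maps (the argument is the same as in Remark~\ref{rem:rho-comparison}(b)). However, weak stationarity alone does not make $\Var\xi_i$ or $\Cov(\xi_0,\xi_k)$ independent of $i$. I would therefore use the non-stationary version noted after Theorem~\ref{thm:ChebyshevIntegratedTot}, replacing lag-covariances by suprema over $i$, or else assume strict stationarity of the pair. The bound is then
\[
\E\Big(\frac1n\sum_{i=1}^n\xi_i\Big)^2\le\frac1n\sup_i\E\xi_i^2\Big(1+2\sum_{k\ge1}\rho(k)\Big),
\]
and $\E\xi_i^2\le\E A_i^2B_i^2\le(\E A_i^4\,\E B_i^4)^{1/2}$. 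Integrating in $u$ and applying Cauchy--Schwarz gives $C/n$ with $C$ controlled by (A1$^+$).

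Under (A1$^{++}$), I would instead bound $|\Cov(\xi_0,\xi_k)|$ with the assumed Davydov--Rio inequality, $\lesssim\rho(k)^{\delta'/(2+\delta')}\|\xi_0\|_{2+\delta'}^2$ with $\delta'=\delta/2$. This requires $A_0B_0\in L^{2+\delta'}$, which is essentially a $(4+\delta)$ moment; this is where I expect the hypothesis to need tightening. I would also assume summability of the corresponding power of $\rho(k)$, as in (Mix).

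Summing the three contributions gives $\int\E(\widehat{\Cov}_n^{(\star)}-g_\star)^2\,d\sigma\le C_\star/n$. Assumption (A3) plays no role at this stage, because the directions enter only in the subsequent Monte Carlo step.
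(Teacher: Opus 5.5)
Your proposal is correct and follows the same route as the paper, whose proof is only a sketch. The paper also splits the Bessel-corrected covariance into an average of centered products contributing $O(n^{-1})$ and mean-product/deterministic bias terms contributing $O(n^{-2})$, bounds each pointwise in $u$ by Cauchy--Schwarz and mixing covariance inequalities, and integrates by Fubini.

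The two caveats you raise are genuine defects of the statement, not gaps in your argument. First, with only weak stationarity and moment conditions at time $0$, nothing bounds $\sup_i\E A_i^4$. Independent $X_i=Y_i=\{\xi_i\}$ in $d=1$ with $\E\xi_i=0$, $\E\xi_i^2=1$ and $\E\xi_i^4\to\infty$ already violate the $C_\star/n$ rate, and your Rosenthal step needs the same uniformity. Second, (A1$^{++}$) cannot suffice: for i.i.d.\ such singletons with $\E|\xi_0|^{2+\delta}<\infty=\E\xi_0^4$, the estimator $\widehat{\Cov}_n^{(\mathrm{loc})}(u)$ is the sample variance of the $\xi_i$, whose mean-square error is infinite.
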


\begin{proof}
	This is a standard second-moment bound for sample covariance estimators under
	weak dependence. Writing the Bessel-corrected covariance in terms of centered
	products and separating the empirical mean terms yields a decomposition into
	a variance term of order $n^{-1}$ and a bias term of order $n^{-2}$. Integrated
	moment bounds follow from \textnormal{(A1$^+$)} or \textnormal{(A1$^{++}$)} by
	Cauchy--Schwarz and Davydov--Rio inequalities. The detailed argument coincides
	with the proof given in the main manuscript and is omitted here for brevity.
\end{proof}

\subsection{Integrated estimators and Monte--Carlo error}

The Monte--Carlo estimator based on $M$ random directions is
\[
\widehat{\Cov}_\star^{(M,n)}(X,Y)
:=\frac1M\sum_{j=1}^M \widehat{\Cov}_n^{(\star)}(u_j).
\]

\begin{theorem}[Consistency and convergence rates]
	\label{thm:consistency-rate-app}
	Assume \textnormal{(A1)--(A3)}. Then, as $M,n\to\infty$,
	\[
	\widehat{\Cov}_\star^{(M,n)}(X,Y)
	\ \xrightarrow{\ \mathbb P\ }\ 
	\Cov_\star(X,Y),
	\qquad
	\star\in\{\mathrm{size},\mathrm{loc},\mathrm{tot}\}.
	\]
	If, in addition, \textnormal{(A1$^+$)} holds, then there exist constants
	$\kappa_1,\kappa_2<\infty$ such that
	\[
	\Var\!\big(\widehat{\Cov}_\star^{(M,n)}(X,Y)\big)
	\ \le\ \frac{\kappa_1}{n}\ +\ \frac{\kappa_2}{M}.
	\]
\end{theorem}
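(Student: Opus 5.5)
The plan is to split the error into a data (sampling) part and a direction (Monte--Carlo) part by conditioning on the data. Write
\[
\widehat{\Cov}_\star^{(M,n)}(X,Y)-\Cov_\star(X,Y)
=\underbrace{\frac1M\sum_{j=1}^M\big(\widehat{\Cov}_n^{(\star)}(u_j)-I_n\big)}_{=:A_{M,n}}
+\underbrace{\big(I_n-\Cov_\star(X,Y)\big)}_{=:B_n},
\qquad I_n:=\int_{\mathbb S^{d-1}}\widehat{\Cov}_n^{(\star)}(u)\,d\sigma(u).
\]
By (A3), conditionally on the data $\mathcal D_n:=\sigma(X_1,Y_1,\dots,X_n,Y_n)$, the terms $\widehat{\Cov}_n^{(\star)}(u_j)$ are i.i.d.\ with conditional mean $I_n$. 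Hence $\E[A_{M,n}\mid\mathcal D_n]=0$ and $\E[A_{M,n}B_n]=0$. The cross term therefore drops out of every second-moment computation, which is what produces the additive form $\kappa_1/n+\kappa_2/M$.

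\textbf{Term $B_n$.} Cauchy--Schwarz in $\sigma$ and Fubini give
\[
\E B_n^2\le\int_{\mathbb S^{d-1}}\E\big(\widehat{\Cov}_n^{(\star)}(u)-g_\star(u)\big)^2\,d\sigma(u).
\]
Under (A1$^+$), Lemma~\ref{lem:per-u-app} bounds this by $C_\star/n$, which gives $\kappa_1=C_\star$.

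Under (A1)--(A2) alone I only need $B_n\to0$ in probability. For each fixed $u$, expand
\[
\widehat{\Cov}_n^{(\star)}(u)=\tfrac{n}{n-1}\Big(\tfrac1n\sum_i\Phi_{X_i}\Phi_{Y_i}-\overline\Phi_n\overline\Psi_n\Big).
\]
The mean factors $\overline\Phi_n,\overline\Psi_n$ converge in $\mathcal H$ by the WLLN (Theorem~\ref{thm:WLLN}). The term $\tfrac1n\sum_i\Phi_{X_i}\Phi_{Y_i}$ converges in $L^1(\sigma)$ by an ergodic/weak-LLN argument granted by (A2), since (A2) is explicitly assumed to give LLNs for second-order statistics. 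Combining, $\int|\widehat{\Cov}_n^{(\star)}(u)-g_\star(u)|\,d\sigma(u)\to0$ in probability, and this bounds $|B_n|$.

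\textbf{Term $A_{M,n}$.} Conditionally,
\[
\E[A_{M,n}^2\mid\mathcal D_n]=\frac1M\Var\big(\widehat{\Cov}_n^{(\star)}(u_1)\mid\mathcal D_n\big)\le\frac1M\int\widehat{\Cov}_n^{(\star)}(u)^2\,d\sigma(u).
\]
Under (A1$^+$), take expectations. Splitting $\widehat{\Cov}_n^{(\star)}=g_\star+(\widehat{\Cov}_n^{(\star)}-g_\star)$ gives
\[
\E\!\int\widehat{\Cov}_n^{(\star)2}\,d\sigma\le2\!\int g_\star^2\,d\sigma+2C_\star/n.
\]
Here $\int g_\star^2\,d\sigma\le\int\E\Phi_X^2\,\E\Phi_Y^2\,d\sigma$, which is finite by the fourth-moment part of (A1$^+$) and Cauchy--Schwarz. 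So $\E A_{M,n}^2\le\kappa_2/M$. Then
\[
\Var(\widehat{\Cov}_\star^{(M,n)})\le\E A_{M,n}^2+\E B_n^2,
\]
using the vanishing cross term. The total case $\star=\mathrm{tot}$ follows by summing the size and loc estimators, since $\Cov_{\mathrm{tot}}=\Cov_{\mathrm{size}}+\Cov_{\mathrm{loc}}$ and $(a+b)^2\le2a^2+2b^2$.

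\textbf{Main obstacle.} The hard step is consistency of $A_{M,n}$ under (A1) only, without fourth moments. Here $\int\widehat{\Cov}_n^{(\star)}(u)^2\,d\sigma(u)$ need not have finite expectation. I would instead use the conditional weak LLN for i.i.d.\ directions, which needs only conditional $L^1$. This conditional integrability holds because
\[
\int|\widehat{\Cov}_n^{(\star)}|\,d\sigma\le\tfrac{1}{n-1}\sum_i\|\Phi_{X_i}\|_{2,\sigma}\|\Phi_{Y_i}\|_{2,\sigma}+\dots<\infty\quad\text{a.s.}
\]
Uniformity over $n$ is then the delicate point. I would handle it by truncating: apply Chebyshev to the truncated average at level $K$, and control the tail by Markov with the $L^1(\sigma)$ bound above. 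Note that $\E\int|\widehat{\Cov}_n^{(\star)}|\,d\sigma$ is bounded uniformly in $n$ by (A1) and Cauchy--Schwarz. This gives $A_{M,n}\to0$ in probability as $M,n\to\infty$ jointly.
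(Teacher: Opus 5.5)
Your rate bound under (A1$^+$) is correct. It is essentially the paper's law-of-total-variance argument made explicit. You condition on the data, which makes $A_{M,n}$ exactly orthogonal to $B_n$; the paper instead splits at $g_\star(u_j)$. The gap is in consistency under (A1)--(A3) alone, exactly at the step you flag. The bound $\sup_n\E\int|\widehat{\Cov}_n^{(\star)}|\,d\sigma<\infty$ does not give uniform integrability of $V_j:=\widehat{\Cov}_n^{(\star)}(u_j)$, and without it your truncation does not close. Markov controls $\PP(|V_j|>K)$, but not the tail mean $\E[|V_j|\mathbf{1}\{|V_j|>K\}\mid\mathcal D_n]$, which enters the conditional centering $I_n$. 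A union bound over the $M$ directions loses a factor $M$. A uniform $L^1$ bound genuinely does not suffice for a triangular-array weak LLN with $M,n\to\infty$ jointly. If row $n$ consists of i.i.d.\ variables equal to $m_n$ with probability $1/m_n$ and $0$ otherwise, and $M=\lfloor\sqrt{m_n}\rfloor$, then the centered average tends to $-1$ in probability.

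The missing ingredient is one you already derive for $B_n$: $\Delta_n:=\int|\widehat{\Cov}_n^{(\star)}-g_\star|\,d\sigma\to0$ in probability. Use it by recentering at $g_\star(u_j)$, which is the paper's decomposition:
\[
A_{M,n}=\frac1M\sum_{j=1}^M\big(\widehat{\Cov}_n^{(\star)}(u_j)-g_\star(u_j)\big)+\Big(\frac1M\sum_{j=1}^M g_\star(u_j)-\int g_\star\,d\sigma\Big)-B_n .
\]
The first term has conditional $L^1$ norm at most $\Delta_n$ given $\mathcal D_n$, uniformly in $M$. So it exceeds $\varepsilon$ with probability at most $\E\min\{1,\Delta_n/\varepsilon\}\to0$. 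The second term is an i.i.d.\ average of the fixed function $g_\star$, which lies in $L^1(\sigma)$ by (A1) and Cauchy--Schwarz. The third you have already handled. No truncation or uniformity in $n$ is then needed; equivalently, $\Delta_n\to0$ supplies the conditional uniform integrability your truncation lacks. The paper's own proof cites Lemma~\ref{lem:per-u-app} for its first term, which formally requires (A1$^+$) or (A1$^{++}$). So you are right to treat the (A1)-only case separately, but it must go through $L^1(\sigma)$ convergence rather than a uniform $L^1$ bound. Relatedly, in the $B_n$ step, upgrade the LLN for $\Phi_{X_i}(u)\Phi_{Y_i}(u)$ granted by (A2) from pointwise in $u$ to $L^1(\sigma)$. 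One way is $L^1(\PP)$ convergence at each $u$ plus domination by $2\sqrt{\E\Phi_X(u)^2\,\E\Phi_Y(u)^2}$, which is $\sigma$-integrable under (A1).
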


\begin{proof}
	Decompose
	\[
	\widehat{\Cov}_\star^{(M,n)}-\Cov_\star
	=\frac1M\sum_{j=1}^M\big(\widehat{\Cov}_n^{(\star)}(u_j)-g_\star(u_j)\big)
	+\Big(\frac1M\sum_{j=1}^M g_\star(u_j)-\int g_\star\,d\sigma\Big).
	\]
	The second term converges to zero by the law of large numbers for i.i.d.\
	directions. The first term converges by Lemma~\ref{lem:per-u-app}. Variance bounds
	follow from the law of total variance, separating Monte--Carlo error
	($O(M^{-1})$) from statistical error ($O(n^{-1})$).
\end{proof}

\begin{corollary}[Consistency of correlation estimators]
	Assume $\Var_\star(X)>0$ and $\Var_\star(Y)>0$. Then
	\[
	\widehat{\Corr}_\star^{(M,n)}(X,Y)
	\ \xrightarrow{\ \mathbb P\ }\ 
	\Corr_\star(X,Y),
	\qquad
	\star\in\{\mathrm{size},\mathrm{loc},\mathrm{tot}\}.
	\]
\end{corollary}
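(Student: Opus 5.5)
The plan is to reduce the correlation statement to Theorem~\ref{thm:consistency-rate-app} together with the continuous mapping theorem. The correlation estimator is the ratio
\[
\widehat{\Corr}_\star^{(M,n)}(X,Y)=\frac{\widehat{\Cov}_\star^{(M,n)}(X,Y)}{\sqrt{\widehat{\Var}_\star^{(M,n)}(X)\,\widehat{\Var}_\star^{(M,n)}(Y)}},
\]
where the variance estimators are the covariance estimators evaluated on the pairs $(X,X)$ and $(Y,Y)$. First, I would check that Theorem~\ref{thm:consistency-rate-app} applies to these diagonal pairs. The sequences $(X_i,X_i)$ and $(Y_i,Y_i)$ are weakly stationary, inherit the mixing assumption (A2) and the moment assumption (A1), and use the same direction sample (A3). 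The theorem then gives
\[
\widehat{\Var}_\star^{(M,n)}(X)\xrightarrow{\PP}\Var_\star(X),\qquad \widehat{\Var}_\star^{(M,n)}(Y)\xrightarrow{\PP}\Var_\star(Y),\qquad \widehat{\Cov}_\star^{(M,n)}(X,Y)\xrightarrow{\PP}\Cov_\star(X,Y).
\]
The same theorem covers $\star=\mathrm{size}$ and $\star=\mathrm{loc}$.

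For $\star=\mathrm{tot}$, I would define the total estimators as the sum of the size and location estimators. This matches Definition~\ref{def:1}. The total variance estimator can also be computed directly from $h$; in that case I would note that the direction-wise cross terms $\widetilde W\widetilde C$ integrate to zero in the limit by Proposition~\ref{prop:orth-add}. Their Monte Carlo average is a consistent estimator of that integral, so it vanishes in probability. Either way, the total quantities converge to their targets by adding the size and location limits.

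Second, I would apply the continuous mapping theorem with $\varphi(a,b,c)=a/\sqrt{bc}$. This map is continuous on $\{b>0,c>0\}$, and the hypothesis $\Var_\star(X)\Var_\star(Y)>0$ puts the limit point in this open set. The estimated denominator may be zero or negative with small probability, so I would set the estimator arbitrarily on that event. The event has probability tending to zero, which is why this causes no difficulty.

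The main obstacle is making sure the joint limit is legitimate, because the three estimators share the same data and directions. Convergence in probability of each coordinate implies joint convergence in probability to a constant vector, so no further dependence structure is needed. A smaller care point is that the diagonal application of Theorem~\ref{thm:consistency-rate-app} needs (A2) to hold for $(X_i,X_i)$, which follows immediately because the generated $\sigma$-fields coincide.
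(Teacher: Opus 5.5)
Your proposal is correct and follows the paper's own route: apply Theorem~\ref{thm:consistency-rate-app} to the cross-covariance estimator and to the two variance estimators (the diagonal pairs $(X_i,X_i)$ and $(Y_i,Y_i)$), then conclude by Slutsky/continuous mapping with $\varphi(a,b,c)=a/\sqrt{bc}$, which is continuous at the limit point since $bc>0$ there. Your extra remarks only make explicit what the paper's one-line proof leaves implicit: the $\mathrm{tot}$ case, which the theorem already covers directly; the event where the estimated denominator vanishes, whose probability tends to zero; and the mixing of $(X_i)$, which is dominated by that of $(X_i,Y_i)$ because $\sigma(X_i)\subset\sigma(X_i,Y_i)$.
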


\begin{proof}
	Apply Theorem~\ref{thm:consistency-rate-app} to the covariance and variance
	estimators and invoke Slutsky’s theorem.
\end{proof}

\begin{remark}
	Choosing $M\simeq n$ balances Monte--Carlo and statistical errors and yields
	mean squared error of order $n^{-1}$. These rates are standard and comparable
	to those of classical integrated covariance estimators.
\end{remark}

\newpage
\bibliographystyle{plain}      
\bibliography{References_R1}   

@book{li2013limit,
	title={Limit theorems and applications of set-valued and fuzzy set-valued random variables},
	author={Li, Shoumei and Ogura, Yukio and Kreinovich, Vladik},
	volume={43},
	year={2013},
	publisher={Springer Science \& Business Media}
}

@article{artstein1975strong,
	title={A strong law of large numbers for random compact sets},
	author={Artstein, Zvi and Vitale, Richard A},
	journal={The Annals of Probability},
	pages={879--882},
	year={1975},
	publisher={JSTOR}
}

@article{chen2015strong,
	title={Strong law of large numbers for upper set-valued and fuzzy-set valued probability},
	author={Chen, Zengjing and Lan, Yuting and Zong, Gaofeng},
	journal={Mathematical Control \& Related Fields},
	volume={5},
	number={3},
	pages={435},
	year={2015},
	publisher={American Institute of Mathematical Sciences}
}

@article{bradley2005basic,
	title={Basic properties of strong mixing conditions. A survey and some open questions},
	author={Bradley, Richard C},
	journal={Probability Surveys},
	volume={2},
	pages={107--144},
	year={2005},
	doi="10.1214/154957805100000104",
	publisher={Project Euclid}
}

@book{conway2019course,
	title={A course in functional analysis},
	author={Conway, John B},
	volume={96},
	year={2019},
	publisher={Springer}
}

@book{rio2017asymptotic,
	title={Asymptotic theory of weakly dependent random processes},
	author={Rio, Emmanuel and others},
	volume={80},
	year={2017},
	publisher={Springer}
}

@incollection{merlevede2009bernstein,
	title={Bernstein inequality and moderate deviations under strong mixing conditions},
	author={Merlev{\`e}de, Florence and Peligrad, Magda and Rio, Emmanuel},
	booktitle={High dimensional probability V: the Luminy volume},
	volume={5},
	pages={273--293},
	year={2009},
	publisher={Institute of Mathematical Statistics}
}

@book{Schneider2014,
	author    = {Schneider, Rolf},
	title     = {Convex Bodies: The Brunn--Minkowski Theory},
	edition   = {2nd expanded ed.},
	publisher = {Cambridge University Press},
	address   = {Cambridge},
	year      = {2014}
}

@book{Gardner2006,
	author    = {Gardner, Richard J.},
	title     = {Geometric Tomography},
	edition   = {2nd ed.},
	publisher = {Cambridge University Press},
	address   = {Cambridge},
	year      = {2006}
}

@article{neto2008centre,
	title={Centre and range method for fitting a linear regression model to symbolic interval data},
	author={Neto, Eufr{\'a}sio de A Lima and De Carvalho, Francisco de AT},
	journal={Computational Statistics \& Data Analysis},
	volume={52},
	number={3},
	pages={1500--1515},
	year={2008},
	publisher={Elsevier}
}

@article{neto2010constrained,
	title={Constrained linear regression models for symbolic interval-valued variables},
	author={Neto, Eufr{\'a}sio de A Lima and De Carvalho, Francisco de AT},
	journal={Computational Statistics \& Data Analysis},
	volume={54},
	number={2},
	pages={333--347},
	year={2010},
	publisher={Elsevier}
}

@misc{billard2007symbolic,
	title={Symbolic Data Analysis: Conceptual Statistics and Data Mining (Wiley Series in Computational Statistics)},
	author={Billard, Lynne and Diday, Edwin},
	year={2007},
	publisher={John Wiley \& Sons, Inc.}
}

@article{ben2002robust,
	title={Robust optimization--methodology and applications},
	author={Ben-Tal, Aharon and Nemirovski, Arkadi},
	journal={Mathematical programming},
	volume={92},
	number={3},
	pages={453--480},
	year={2002},
	publisher={Springer}
}

@book{Schrijver1998,
	author    = {Alexander Schrijver},
	title     = {Theory of Linear and Integer Programming},
	publisher = {John Wiley \& Sons},
	year      = {1998}
}

@book{Molchanov2005,
	author    = {Ilya Molchanov},
	title     = {Theory of Random Sets},
	publisher = {Springer},
	year      = {2005}
}

@article{tuyen2026strong,
	title={On strong law of large numbers for weakly stationary $\varphi$-mixing set-valued random variable sequences}, 
	author={Luc Tri Tuyen},
	year={2026},
	journal = {arXiv preprint arXiv:2601.09197},
	archivePrefix={arXiv},
	primaryClass={math.PR},
	url={https://arxiv.org/abs/2601.09197}, 
}
\end{document}